\newtheorem{coro}{Corollary}
\newtheorem{defi}{Definition}
\newtheorem{ex}{Example}
\newtheorem{rem}{Remark}
\newtheorem{prop}{Proposition}
\newtheorem{lem}{Lemma}
\newtheorem{theo}{Theorem}
\newcommand{\fFor}{\bf{fFor}}
\newcommand{\MTL}{\mathcal{MTL}}
\newcommand{\fMTL}{f\mathcal{MTL}}
\newcommand{\faMTL}{fa\mathcal{MTL}}
\newcommand{\faMTLc}{fa\mathcal{MTL}c}
\newcommand{\GfMTL}{r\mathcal{MTL}}
\newcommand{\fLF}{f\mathcal{LF}}
\newcommand{\ufLF}{u\mathcal{LF}}
\newcommand{\Skel}{\mathfrak{S}}
\newcommand{\nzid}{\mathcal{J}(\mathcal{I}(M))}
\newcommand{\Ida}{\mathcal{G}}
\newcommand{\Vuelta}{\mathcal{H}}
\newcommand{\fCoh}{f\textbf{Coh}}
\newcommand{\Shv}{\mathbf{Shv}}
\newcommand{\Set}{\textbf{Set}}
\newcommand{\Pprod}{\mathcal{P}}
\newcommand{\GoodId}{\text{local unit}}
\newcommand{\Appropiate}{representable}
\newcommand{\dec}[1]{\mathcal{D}(\textbf{#1})}
\newcommand{\filter}[1]{X_{#1}}
\newcommand{\JI}[1]{\mathcal{J}(\mathcal{I}(#1))}
\newcommand{\etiq}[1]{l_{#1}}
\newcommand{\Filter}[2]{X_{#1}^{#2}}
\newcommand{\cov}[2]{\mathcal{C}_{#1}(#2)}
\newcommand{\mtl}[2]{M_{#1}(#2)}
\begin{document}

\title{On finite MTL-algebras that are representable as poset products of archimedean chains}

\author{Jos\'{e} Luis Castiglioni and William Javier Zuluaga Botero}

%\date{}

\maketitle

\begin{abstract}
\noindent
We obtain a duality between certain category of finite MTL-algebras and the category of finite labeled trees. In addition we prove that certain poset products of MTL-algebras are essentialy sheaves of MTL-chains over Alexandrov spaces. Finally we give a concrete description for the studied poset products in terms of direct products and ordinal sums of finite MTL-algebras.
\end{abstract}

\section*{Introduction}
In \cite{EG2001} Esteva and Godo introduced MTL-logic as the basic fuzzy logic of left-continuous t-norms. Furthermore, a new class of algebras was defined, the variety of MTL-algebras. This variety constitutes an  equivalent algebraic semantics for MTL-logic. MTL-algebras are essentially integral commutative residuated lattices with bottom satisfying the prelinearity equation:
$$
(x \to y) \vee (y \to x) \approx 1
$$

\noindent
This paper is divided as follows. Section \ref{Preliminaries} is devoted to present the basic contents that are necessary to understand this work. In Section  \ref{section Finite archimedean MTL-chains}, we characterize the finite archimidean MTL-chains in terms of their nontrivial idempotent elements. In Section \ref{section Finite labeled forests}, we show that there exist a functor from the category of finite MTL-algebras to the category of finite labeled forests. We take advantage of the intimate relation between idempotent elements and filters, that is given for the case of finite MTL-algebras. In Section \ref{section Forest Product of MTL-algebras}, we study the forest products of MTL-chains. We prove that such construction is, in fact, a sheaf over an Alexandrov space whose fibers are MTL-chains. In Section \ref{section from finite forest products to MTL-algebras} we use the results obtained in Section \ref{section Forest Product of MTL-algebras} in order to establish a functor from the category of finite labeled forest to the category of finite MTL-algebras. We also bring a duality theorem between the category of representable finite MTL-algebras and finite labeled forest. Finally, we present a description of the forest product of finite MTL-algebras in terms of ordinal sums and direct products of finite MTL-algebras.

\section{Preliminaries}
\label{Preliminaries}

The aim of the following section is to give a brief survey about the background on MTL-algebras required to read this work. We present some known definitions and some particular constructions for semihoops that naturally can be extended to MTL-algebras.
\\

\noindent
We write $\Set$ to denote the category whose objects are sets and their morphisms are set functions.
\\

\noindent
A \emph{semihoop}\footnote{Some authors (for example \cite{NEG2005}) name prelinear semihoop what we call here simply semihoop.} is an algebra $\textbf{A}=(A,\cdot, \rightarrow, \wedge, \vee, 1)$ of type $(2,2,2,2,0)$ such that $(A,\wedge,\vee)$ is lattice with $1$ as greatest element, $(A,\cdot,1)$ is a commutative monoid and for every $x,y,z\in A$ the following conditions holds:
\\

\begin{tabular}{cccccc}
\emph{(residuation)} & & & &  & $xy\leq z\; \text{if and only if}\; x\leq y\rightarrow z$
\\
\emph{(prelinearity)} & & & & & $(x\rightarrow y)\vee (y\rightarrow x) = 1$
\end{tabular}
\\

\noindent
Equivalently, a semihoop is an integral, commutative and prelinear residuated lattice.
\\

\begin{rem}
It is customary in the literature on semihoops, \cite{NEG2005}, to present them in the signature $(\cdot, \rightarrow, \wedge, 1)$, beeing $\vee$ a defined operation.
However, although less common, it is also possible to present them in the signature $(\cdot, \rightarrow, \vee, 1)$, now beeing the $\wedge$ defined as
\begin{equation}
\label{inf in semihoops}
x \wedge y := (x \cdot (x \to y)) \vee (y \cdot (y \to x)).
\end{equation}

\noindent Let us check that the operation defined in \eqref{inf in semihoops} is the imfimum in $H$.

\noindent On one hand, since $x \cdot (x \to y) \leq x \cdot 1 = x$ and $y \cdot (y \to x) \leq x$, we get $x \wedge y \leq x$. Similarly, we deduce that $x \wedge y \leq y$.

\noindent On the other hand, if we assume that $c \leq x, y$, by monotonicity, we get that $x \cdot (x \to y) \geq c \cdot (x \to y)$ and $y \cdot (y \to x) \geq c \cdot (y \to x)$. Hence, $x \wedge y \geq (c \cdot (x \to y)) \vee (c \cdot (y \to x)) = c \cdot ((x \to y) \vee (y \to x))$. Since, by prelinearity, the rightmost term of this inequality is $c$, we get that $x \wedge y \geq c$.
\end{rem}

\noindent
This makes $(H, \cdot, \to, \wedge, \vee, 1)$ an \emph{integral commutative residuated lattice}. A semihoop $\textbf{A}$ is \emph{bounded} if $(A,\wedge,\vee, 1)$ has a least element $0$. An \emph{MTL-algebra} is a bounded semihoop, hence, MTL-algebras are prelinear integral bounded commutative residuated lattices, as usually defined \cite{EG2001,HM2009,NEG2005}. An MTL-algebra $\textbf{A}$ is an \emph{MTL chain} if its semihoop reduct is totally ordered. Let {\bf 1} and {\bf 2} be the only MTL-chains of one and two elements, respectively. For the rest of this paper we will refer to {\bf 1} as the \emph{trivial MTL-chain}.
\\

\noindent
It is known that the theory of MTL-algebras is a variety so we can realize the presentation of an algebraic theory. We write $\MTL$ for the algebraic category of MTL-algebras.
\\

\noindent
Let $\textbf{I}=(I,\leq)$ be a totally ordered set and $\mathcal{F}=\{\textbf{A}_{i}\}_{i\in I}$ a family of semihoops. Let us assume that the members of $\mathcal{F}$ share (up to isomorphism) the same neutral element; i.e, for every $i\neq j$, $A_{i}\cap A_{j}=\{1\}$. The \emph{ordinal sum} of the family $\mathcal{F}$, is the structure $\bigoplus_{i\in I} A_{i}$ whose universe is $\bigcup_{i\in I} A_{i}$ and whose operations are defined as:
\begin{displaymath}
 x\cdot y= \left\{ \begin{array}{lcl}
             x\cdot_{i}y, & if & x,y\in A_{i}
             \\ y, & if & x\in A_{i},\; and\; y\in A_{j}-\{1\},\; with\; i>j,
             \\ x, & if & x\in A_{i}-\{1\},\; and\; y\in A_{j},\; with\; i<j.
             \end{array}
   \right.
\end{displaymath}

\begin{displaymath}
 x\rightarrow y= \left\{ \begin{array}{lcl}
             x\rightarrow_{i}y, & if & x,y\in A_{i}
             \\ y, & if & x\in A_{i},\; and\; y\in A_{j},\; with\; i>j,
             \\ 1, & if & x\in A_{i}-\{1\},\; and\; y\in A_{j},\; with\; i<j.
             \end{array}
   \right.
\end{displaymath}
\noindent
where the subindex $i$ denotes the application of operations in $A_{i}$.
\\

\noindent
Moreover, if $\textbf{I}$ has a minimum $\perp$, $A_{i}$ is a totally ordered semihoop for every $i\in \textbf{I}$ and $A_{\perp}$ is bounded then $\bigoplus_{i\in I} A_{i}$ becomes a MTL-chain.
\\

\noindent
Let $M$ be an MTL-algebra. A submultiplicative monoid $F$ of $M$ is called a filter if is an up set respect to the order of $M$. In particular, for every $x\in F$, we write $\langle x \rangle$ for the filter generated by $x$; i.e.,
\[\langle x \rangle = \{a \in F \ | \ x^n \leq a \; \text{for some } n \in \mathbb{N}\}.\]
\noindent
For any filter $F$ of $M$, we can define a binary relation $\sim$, on $M$ by $a\sim b$ if and only if $a\rightarrow b\in F$ and $b\rightarrow a\in F$. A straightforward verification shows that $\sim$ is a congruence on $M$. For every $a\in M$, we write $[a]$ for the equivalence class of $a$ in $M/F$. Recall that (Section 3 of \cite{CMZ2016}) the canonical homomorphism $h:A\rightarrow A/M$ has the \emph{universal property} of forcing all the elements of $M$ to be $1$; i.e, for every MTL-algebra $B$ and every MTL-morphism $f:A\rightarrow B$ such that $f(a)=1$ for every $a\in M$, there exists a unique MTL-morphism $g:A/M \rightarrow B$ making the diagram below
\begin{displaymath}
\xymatrix{
A \ar[r]^-{h} \ar[dr]_-{f} & A/M \ar@{-->}[d]^-{g}
\\
 & B
}
\end{displaymath}

\noindent
commute. A filter $F$ of $M$ is \emph{prime} if $0\notin F$ and $x\vee y \in F$ entails $x\in F$ or $y\in F$, for every $x,y\in M$. The set of prime filters of an MTL-algebra $M$ ordered by inclusion will called \emph{spectrum} and will be noted as $\mathrm{Spec}(M)$.

\section{Finite archimedean MTL-chains}
\label{section Finite archimedean MTL-chains}

In this section we bring a characterization for the archimidean finite MTL-chains in terms of their nontrivial idempotent elements. In addition we prove that every morphism of finite archimedean MTL-chains is injective.
\\

\noindent
A totally ordered MTL-algebra is said to be \emph{archimedean} if for every $x \leq y < 1$, there exists $n \in \mathbb{N}$ such that $y^n \leq x$.

\begin{lem}
\label{descending chain condition for finite archimedean MTL-chains}
Let $M$ be an MTL-chain. If there is an $a \in M$ such that for every $n \in \mathbb{N}$, $a^{n + 1} < a^n$, then $M$ is infinite.
\end{lem}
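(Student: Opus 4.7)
The statement is essentially a direct consequence of the hypothesis together with the fact that $M$ is totally ordered. My plan is simply to exhibit an explicit infinite subset of $M$, namely the set of powers $\{a^n : n \in \mathbb{N}\}$.

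First, I would observe that since $M$ is an MTL-chain, the order on $M$ is total, and in particular the strict order $<$ is transitive. The hypothesis gives $a^{n+1} < a^n$ for every $n \in \mathbb{N}$. By a straightforward induction on $k \geq 1$, one obtains $a^{n+k} < a^n$ for all $n,k \in \mathbb{N}$ with $k \geq 1$: the base case $k=1$ is the hypothesis, and for the inductive step one chains $a^{n+k+1} < a^{n+k} < a^n$ using transitivity.

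From this it follows that the assignment $n \mapsto a^n$ is injective: if $m < n$ then $a^n < a^m$, so $a^n \neq a^m$. Hence the set $\{a^n : n \in \mathbb{N}\} \subseteq M$ is in bijection with $\mathbb{N}$, and therefore $M$ is infinite.

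There is essentially no obstacle here; the content of the lemma is really just the observation that a strictly monotone sequence in a totally ordered set produces infinitely many distinct elements. The role of the MTL-structure is only to ensure that powers $a^n$ are well-defined (via the commutative monoid structure) and that the order is total so that $<$ is transitive and the strict inequalities in the hypothesis genuinely force distinctness.
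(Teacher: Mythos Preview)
Your proof is correct. The paper states this lemma without proof, presumably regarding it as immediate; your argument---that a strictly decreasing sequence $a^0 > a^1 > a^2 > \cdots$ yields infinitely many distinct elements---is exactly the obvious justification. One small remark: transitivity of the strict order $<$ holds in any partial order, so the total order hypothesis is not needed for that step (it is already implicit in the well-definedness of the strict inequalities $a^{n+1} < a^n$).
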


\begin{prop}
\label{finite archimedean MTL-chains}
A finite MTL-chain $M$ is archimedean if and only if $M={\bf 2}$ or $M$ does not have nontrivial idempotent elements.
\end{prop}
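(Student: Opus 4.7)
The plan is to prove both directions by working with the powers $y^n$ of an element $y<1$, using Lemma \ref{descending chain condition for finite archimedean MTL-chains} as the main technical input.

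For the forward direction, I would argue by contrapositive. Suppose $M \neq \mathbf{2}$ and $M$ contains some nontrivial idempotent $e$, meaning $0 < e < 1$ with $e^2 = e$. By induction, $e^n = e$ for every $n \geq 1$. Applied to the pair $0 \leq e < 1$, archimedeanity would force some $n$ with $e^n \leq 0$, i.e., $e = e^n = 0$, a contradiction. Hence no such $e$ exists, and $M$ is either $\mathbf{2}$ or has only trivial idempotents.

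For the backward direction, first note that $\mathbf{2}$ is trivially archimedean, since for $x \leq y < 1$ one has $y = 0$ and $y^1 \leq x$. So assume $M$ is a finite MTL-chain with no nontrivial idempotents, and pick $x \leq y < 1$. The sequence $y \geq y^2 \geq y^3 \geq \cdots$ is weakly decreasing because $y \leq 1$. By the contrapositive of Lemma \ref{descending chain condition for finite archimedean MTL-chains}, finiteness of $M$ forbids a strictly decreasing sequence of powers, so there exists $n \geq 1$ with $y^{n+1} = y^n$. A quick induction on $k$ then gives $y^{n+k} = y^n$ for all $k \geq 0$; in particular $y^{2n} = y^n$, so $y^n$ is idempotent. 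Since $y^n \leq y < 1$, the only way for $y^n$ to be a trivial idempotent is $y^n = 0$, and then $y^n \leq x$, establishing the archimedean condition.

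The argument has no genuine obstacle beyond correctly invoking the lemma; the one small subtlety worth stressing is the chain of implications $y^{n+1}=y^n \Rightarrow y^{2n}=y^n$, which ensures that stabilization of the sequence of powers really does deliver an idempotent element below $1$, and thereby forces the dichotomy $M = \mathbf{2}$ or no nontrivial idempotents.
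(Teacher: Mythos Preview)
Your proof is correct and follows essentially the same approach as the paper: both directions hinge on the observation that in a finite chain the powers of any $y<1$ must stabilize (via Lemma~\ref{descending chain condition for finite archimedean MTL-chains}), yielding an idempotent $y^n$ which is then forced to be $0$ in the absence of nontrivial idempotents, while the forward direction is the same contrapositive argument using $e^n=e$. The only cosmetic difference is that the paper fixes an element $a\neq 0,1$ and argues for arbitrary $b<a$, whereas you start directly from the archimedean hypothesis $x\leq y<1$; the content is identical.
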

\begin{proof}
If $M={\bf 2}$ the proof is trivial. If $M\neq {\bf 2}$ and do not have nontrivial idempotents, there exists $a \neq 0, 1$ in $M$. Since $M$ is finite, by Lemma \ref{descending chain condition for finite archimedean MTL-chains}, there exists $n \in \mathbb{N}$ such that $a^{n + 1} = a^n$. If $a^n > 0$, $(a^n)^2 = a^n$, and hence, $M$ has a nontrivial idempotent, in contradiction with the fact that $M$ does not have nontrivial idempotents. Hence, there exists $n \in \mathbb{N}$ such that $a^n = 0$. Now, for $b < a$ in $M$, we have that $a^n \leq b$, from where we can conclude that $M$ is archimedean. On the other hand, let us assume that $M$ is archimedean but there exists an idempotent element $a \neq 0, 1$. Hence, $a^n = a$ for every $n \in \mathbb{N}$. If $b < a$ (for example, if $b = 0$), we have that for every $n \in \mathbb{N}$, $b < a \leq a^n$, contradicting the archimedeanity of $M$. In consequence, no such idempotent can exist.
\end{proof}

\begin{coro}\label{Equivalent forms for arquimedeanity}
For any finite nontrivial MTL-chain $M$, there are equivalent:
\begin{enumerate}
	\item[i.] $M$ is archimedean,
	\item[ii.] $M$ is simple, and
	\item[iii.] $M$ does not have nontrivial idempotent elements.
\end{enumerate}
\end{coro}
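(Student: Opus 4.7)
The plan is to derive the equivalences from Proposition \ref{finite archimedean MTL-chains} together with the filter/congruence correspondence recalled in the preliminaries. The implication $i \Leftrightarrow iii$ is essentially already known, so the work lies in relating \emph{simple} to the other two conditions via the chain condition on iterated products.

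First I would observe that $i \Leftrightarrow iii$ follows immediately from Proposition \ref{finite archimedean MTL-chains}: for a finite nontrivial MTL-chain $M$, the disjunction ``$M = \mathbf{2}$ or $M$ has no nontrivial idempotents'' collapses to $iii$, since $\mathbf{2}$ itself has no nontrivial idempotents (its only elements $0$ and $1$ are trivially idempotent).

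Next I would prove $i \Rightarrow ii$. Recall from the preliminaries that congruences on $M$ correspond bijectively to filters, so $M$ is simple iff its only filters are $\{1\}$ and $M$. Let $F$ be a filter with $F \neq \{1\}$, and pick $a \in F$ with $a < 1$. Applying archimedeanity to $x = 0$ and $y = a$ gives some $n \in \mathbb{N}$ with $a^n \leq 0$, i.e.\ $a^n = 0$. Since $F$ is a submultiplicative monoid, $0 = a^n \in F$, and since $F$ is an up-set, $F = M$. Hence $\{1\}$ and $M$ are the only filters, so $M$ is simple.

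For $ii \Rightarrow i$, take any $a < 1$ and consider the principal filter $\langle a \rangle = \{b \in M : a^n \leq b \text{ for some } n\}$. Since $a \in \langle a \rangle$ but $a \neq 1$, simplicity forces $\langle a \rangle = M$, hence $0 \in \langle a \rangle$, which means $a^n = 0$ for some $n$. Then for any $x \leq y < 1$, applying this to $y$ yields $n$ with $y^n = 0 \leq x$, proving archimedeanity. This closes the cycle $i \Rightarrow ii \Rightarrow i$, and combined with the first paragraph gives the three-way equivalence.

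No step looks genuinely hard: the main subtlety is simply to remember that for finite MTL-chains being simple is tantamount to every non-unit element being nilpotent, and that nilpotence is exactly the content of archimedeanity with $x = 0$ in a chain with least element; the rest is bookkeeping using the filter/congruence dictionary already established.
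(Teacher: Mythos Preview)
Your proof is correct. The paper states this corollary without proof, treating it as an immediate consequence of Proposition~\ref{finite archimedean MTL-chains}; your argument supplies the details the paper omits. The route you take---deducing $i \Leftrightarrow iii$ directly from Proposition~\ref{finite archimedean MTL-chains} (after observing that $\mathbf{2}$ trivially satisfies $iii$), and linking $ii$ to $i$ via the filter--congruence correspondence and nilpotence of non-unit elements---is the natural one. The one bibliographic point to flag is that the bijection between filters and congruences is used but not stated explicitly in the preliminaries (only the passage from filters to congruences is given there); this is of course standard for MTL-algebras, but you may want to cite it if the intended audience expects that level of precision.
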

\noindent
In \cite{HM2009} Hor\v{c}ik and Montagna gave an equational characterization for the archimedean finite MTL-chains.

\begin{lem}[\cite{HM2009}, Lemma 6.6]
	\label{equation for arquinedeanity}
Let $M$ be a finite MTL-chain. Then, $M$ is archimedean if and only if for every $a, b \in M$,
\[
((a \to b) \to b)^2 \leq a \vee b.
\]
\end{lem}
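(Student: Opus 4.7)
I would handle the two implications separately, using Proposition~\ref{finite archimedean MTL-chains} (for a finite nontrivial MTL-chain, archimedeanity is equivalent to having no nontrivial idempotent) as the bridge.

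For the implication \emph{the equation implies archimedean}, I argue by contrapositive. Assume $M$ has a nontrivial idempotent $e$, so $0 < e < 1$ and $e \cdot e = e$; since $M$ is finite and $e > 0$, the set $\{x \in M : x < e\}$ has a greatest element $b^{\ast}$. A short residuation argument shows $e \to b^{\ast} = b^{\ast}$: on the one hand $b^{\ast} \cdot e \leq b^{\ast} \cdot 1 = b^{\ast}$, so $b^{\ast} \leq e \to b^{\ast}$; on the other hand $e \cdot e = e > b^{\ast}$ means $e$ itself does not witness the residuation, forcing $e \to b^{\ast} < e$, and by maximality of $b^{\ast}$ below $e$ one gets equality. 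Substituting $a = e$ and $b = b^{\ast}$ into the candidate equation then gives $((a \to b) \to b)^{2} = (b^{\ast} \to b^{\ast})^{2} = 1^{2} = 1$, while $a \vee b = e < 1$, violating the equation.

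For the implication \emph{archimedean implies the equation}, I fix $a, b \in M$. By totality of the chain I may assume $b \leq a$, so that $a \vee b = a$; if $a = b$ the inequality is immediate, so assume $b < a$, and set $c := a \to b$ and $d := c \to b$. Residuation immediately yields $a c \leq b$, $c d \leq b$, and $a \leq d$. Moreover, in the archimedean setting one verifies that $b < c$: if $n$ is minimal with $a^{n} \leq b$ (which exists because $b \leq a < 1$), then $n \geq 2$ and $a^{n-1} \leq c$, while $a^{n-1} > b$ by minimality, so $c > b$; in particular $d < 1$. If additionally $d \leq c$, then $d^{2} \leq c d \leq b \leq a$ and we are done.

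The real obstacle is the remaining case $c < d$, where the archimedean hypothesis has to do genuine work. Here I would argue by contradiction: assuming $d^{2} > a$, I iterate $c d \leq b$ to obtain $c d^{n} \leq b$ for every $n \geq 1$, and use finiteness of $M$ together with Lemma~\ref{descending chain condition for finite archimedean MTL-chains} to see that the descending sequence $d \geq d^{2} \geq d^{3} \geq \cdots$ stabilizes at an idempotent; the no-nontrivial-idempotent characterization of archimedeanity (Proposition~\ref{finite archimedean MTL-chains}) forces this stable value to be $0$, so the sequence is in fact strictly decreasing until it reaches $0$. The plan is then to combine this strict descent with the maximality clauses defining $c = a \to b$ and $d = c \to b$ to pin down a term of the sequence lying strictly between $a$ and $d$ that is forced to be idempotent, contradicting archimedeanity. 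This combinatorial/structural step is where the technical weight of the proof sits; an alternative route would be to first establish a classification of finite archimedean MTL-chains (essentially reducing them to Wajsberg chains) and then verify the inequality by direct computation on the classification, using that $(a \to b) \to b = a \vee b$ in any MV-chain.
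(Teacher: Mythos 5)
The paper does not prove this lemma --- it is imported verbatim from \cite{HM2009} --- so there is no internal argument to compare yours with; I am judging the attempt on its own terms. Your direction ``the inequality implies archimedean'' is complete and correct: extracting a nontrivial idempotent $e$ via Proposition~\ref{finite archimedean MTL-chains}, taking $b^{\ast}$ to be its immediate predecessor, and verifying $e \to b^{\ast} = b^{\ast}$ so that the left-hand side evaluates to $1$ while $e \vee b^{\ast} = e < 1$ is exactly the right construction. In the converse direction your reductions are also sound (modulo explicitly disposing of the trivial cases $a \le b$ and $a = 1$; ``I may assume $b \le a$'' is not a symmetry reduction here), and the sub-case $d \le c$ is handled correctly.

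The genuine gap is the sub-case $c < d$, which you yourself flag as only a ``plan'', and the finishing move you propose would not work: assuming $d^{2} > a$, the powers $d \ge d^{2} \ge \cdots$ do descend to $0$ by archimedeanity, but precisely for that reason \emph{none} of them is a nontrivial idempotent, and nothing forces one of them to sit between $a$ and $d$; the contradiction cannot come from the sequence $d^{n}$ alone. A lever that does close the case: from $ac \le b$ one gets $a \le d$, hence $d \to b \le a \to b = c$ and also $c \le d \to b$ (as $cd \le b$), so $d \to b = c$; the hypothesis $a < d^{2}$ then gives $d \to c = d^{2} \to b \le a \to b = c$, so the successor $c^{+}$ of $c$ (which exists because $c < 1$) satisfies $d c^{+} > c$, i.e.\ $d c^{+} \ge c^{+}$, and inductively $d^{n} c^{+} \ge c^{+} > 0$ for all $n$ --- contradicting $d^{N} = 0$, which follows from $d < 1$ and Proposition~\ref{finite archimedean MTL-chains}. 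Finally, the fallback you mention (classify finite archimedean MTL-chains as Wajsberg chains and use $(a \to b) \to b = a \vee b$) is not available: finite archimedean MTL-chains need not be MV-chains (e.g.\ the four-element chain $0 < x < y < 1$ with $y^{2} = yx = x^{2} = 0$ is archimedean but has $\neg\neg x = y \neq x$), and the identity $(a \to b) \to b = a \vee b$ fails there even though the inequality of the lemma holds.
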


%\noindent
%In \cite{EG2001} several axiomatic extension of MTL-logics were introduced. In particular, by considering the extension
%\[\neg \neg \varphi \rightarrow \varphi \]
%we get the involutive monoidal t-norm based logic. This extension allows us to define a particular class of MTL-algebras.

%\begin{defi}
%An IMTL-algebra is an MTL-algebra satisfying the involution condition: \[\neg \neg x \approx x.\]
%\end{defi}

%\begin{coro}
%Every finite IMTL-chain is archimedean.
%\end{coro}
%\vspace{0.5cm}

\noindent
The last part of this section is devoted to obtain a description of the morphisms between finite arquimedean MTL-chains. Let $f:A\rightarrow B$ be a morphism of finite MTL-chains. As usual, we write $K_{f}$ for the kernel of $f$; i.e.,
\[K_{f}=\{x\in A\mid f(x)=1\}\]

\begin{lem}\label{Characterization of Monomorphisms}
Let $f:A\rightarrow B$ be a morphism of MTL-algebras. Then $f$ is injective if and only if $f(x)=1$ implies $x=1$.
\end{lem}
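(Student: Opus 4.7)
The plan is to use the standard fact that in any integral residuated lattice, and in particular in an MTL-algebra, the order is recovered from the implication by the equivalence $x\leq y$ if and only if $x\to y = 1$. This turns an equality into a pair of conditions of the form ``something equals $1$'', which is exactly what the hypothesis on $f$ controls.

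The forward direction is immediate: if $f$ is injective and $f(x)=1=f(1)$, then $x=1$.

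For the converse, I would assume that $f(x)=1$ forces $x=1$, take $a,b\in A$ with $f(a)=f(b)$, and show $a=b$. Since $f$ is an MTL-morphism, it preserves $\to$ and $1$, so
\[
f(a\to b) = f(a)\to f(b) = f(b)\to f(b) = 1,
\]
and symmetrically $f(b\to a)=1$. By hypothesis, $a\to b = 1$ and $b\to a = 1$. Using the residuation law $xy\leq z \iff x\leq y\to z$ with $x=1$, one obtains the characterization $u\leq v \iff u\to v = 1$ valid in every integral residuated lattice; thus $a\leq b$ and $b\leq a$, whence $a=b$ by antisymmetry.

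There is no real obstacle here: the statement is essentially a translation between ``congruence determined by its $1$-class'' and injectivity, a feature that MTL-algebras inherit from being integral residuated lattices. The only ingredient needed beyond the definition of morphism is the equivalence $x\to y = 1 \iff x\leq y$, which is a one-line consequence of residuation and integrality and has been used implicitly throughout the preliminaries.
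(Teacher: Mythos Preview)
Your proof is correct and follows essentially the same route as the paper's: both directions are argued identically, using that $f$ preserves $\to$ together with the equivalence $u\leq v \iff u\to v = 1$ to pass from $f(a)=f(b)$ to $a\to b = b\to a = 1$ and hence $a=b$. The only cosmetic difference is that the paper first writes $f(a)\leq f(b)$ and $f(b)\leq f(a)$ before passing to implications, whereas you compute $f(a)\to f(b)=1$ directly.
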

\begin{proof}
Let $f$ be an injective morphism of MTL-algebras, then $f(x)=1=f(1)$ implies $x=1$. On the other hand, let us assume that $f(x)=1$ implies $x=1$. If $f(a)=f(b)$ then $f(a)\leq f(b)$ and $f(b)\leq f(a)$, thus by general properties of the residual it follows that $f(a)\rightarrow f(b)=1$ and $f(b)\rightarrow f(a)=1$ so $f(a\rightarrow b)=1$ and $f(b\rightarrow a)=1$. From the assumption we get that $a\rightarrow b=1$ and $b\rightarrow a=1$, thus $a\leq b$ and $b\leq a$. Hence, $a=b$ so $f$ is injective.
\end{proof}

\begin{lem}\label{Arquimedeanity restricts to monos}
Let $f:A\rightarrow B$ be a morphism of finite MTL-chains. If $A$ is archimedean then $B={\bf 1}$ or $f$ is injective.
\end{lem}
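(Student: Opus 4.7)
The plan is to argue contrapositively using Lemma \ref{Characterization of Monomorphisms}: I would assume that $f$ is not injective and deduce that $B$ must be trivial. By that lemma, non-injectivity supplies an element $a \in A$ with $a \neq 1$ and $f(a) = 1$. Since $A$ is a finite archimedean MTL-chain, we have $0 \leq a < 1$, and I would apply the definition of archimedeanity to the pair $x = 0$, $y = a$ to obtain some $n \in \mathbb{N}$ with $a^n \leq 0$, hence $a^n = 0$.

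With this in hand, the argument closes by evaluating $f$ on the equality $a^n = 0$. Since an MTL-morphism preserves the constant $0$, multiplication, and the constant $1$, we get
$$
0_B \;=\; f(0) \;=\; f(a^n) \;=\; f(a)^n \;=\; 1_B^n \;=\; 1_B,
$$
so $B$ has a coinciding top and bottom, which forces $B = \mathbf{1}$. Combined with the contrapositive setup, this gives the dichotomy $B = \mathbf{1}$ or $f$ injective.

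There is no substantive obstacle; the argument is essentially the observation that the kernel filter $K_f$ of a morphism out of an archimedean chain cannot contain any element strictly below $1$ without dragging $0$ into it and thereby collapsing the codomain. Alternatively, one can package the same reasoning through Corollary \ref{Equivalent forms for arquimedeanity}, which identifies archimedeanity of $A$ with simplicity: the kernel congruence of $f$ is then either the identity (giving injectivity) or the total relation (forcing $f(0) = f(1) = 1_B$, so again $B = \mathbf{1}$). The only small point of care is to explicitly take $0$ as the lower witness in the archimedean condition, which is legitimate because $A$ is a bounded chain.
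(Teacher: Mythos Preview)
Your proof is correct. Your primary argument applies the archimedean definition directly to force $a^n = 0$ and hence $0_B = 1_B$, whereas the paper takes precisely the route you sketch as an alternative: it invokes Corollary~\ref{Equivalent forms for arquimedeanity} to conclude that $A$ is simple, splits on $K_f = A$ versus $K_f = \{1\}$, and then appeals to Lemma~\ref{Characterization of Monomorphisms} in the latter case. The two arguments are equivalent unpackings of the same idea; yours is marginally more self-contained since it avoids the detour through simplicity.
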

\begin{proof}
Since $A$ is archimedean, by $(ii)$ of Corollary \ref{Equivalent forms for arquimedeanity} we get that if is also simple so $K_{f}=A$ or $K_{f}=\{1\}$. In the first case, we get that $f(a)=1$ for every $a\in A$, so in particular $f(0)=0=1$, hence $B={\bf 1}$. In the last case, it follows that $f(a)=1$ implies $a=1$, so by Lemma \ref{Characterization of Monomorphisms} $f$ is injective.
\end{proof}

\begin{coro}\label{With archimedean monos reflects zero}
Let $f:A\rightarrow B$ be a morphism of finite MTL-chains. If $A$ is archimedean and $B\neq {\bf 1}$ then $f(x)=0$ implies $x=0$.
\end{coro}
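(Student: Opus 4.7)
The plan is to derive this directly from Lemma \ref{Arquimedeanity restricts to monos}. Since $A$ is archimedean and $B \neq \mathbf{1}$, that lemma forces $f$ to be injective (the alternative $B = \mathbf{1}$ is excluded by hypothesis).

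Once injectivity is in hand, the conclusion is immediate: $f$ preserves the constant $0$ (as a morphism of MTL-algebras, which are bounded), so $f(0) = 0$. Hence if $f(x) = 0$ then $f(x) = f(0)$, and injectivity yields $x = 0$.

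The argument is essentially a one-line consequence of the preceding lemma, so there is no real obstacle. The only minor point to verify is that morphisms of MTL-algebras preserve $0$, which follows from the fact that MTL-algebras are bounded semihoops and MTL-morphisms are, by definition, morphisms of bounded structures (equivalently, $0$ is term-definable if we view MTL-algebras as $0$-pointed residuated lattices).
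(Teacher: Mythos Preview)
Your argument is correct and actually cleaner than the paper's. Both proofs begin the same way: invoke Lemma~\ref{Arquimedeanity restricts to monos} to conclude that $f$ is injective (the hypothesis $B\neq\mathbf{1}$ rules out the only other alternative). From there, however, the paper takes a detour: assuming $f(a)=0$ with $a\neq 0$, it computes $f(a^{2})=f(a)^{2}=0=f(a)$, uses injectivity to deduce $a^{2}=a$, and then argues that $0<a<1$ yields a nontrivial idempotent in $A$, contradicting Corollary~\ref{Equivalent forms for arquimedeanity}. Your route is the direct one: since morphisms in the algebraic category $\MTL$ preserve the constant $0$, injectivity alone gives $f(x)=0=f(0)\Rightarrow x=0$. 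The paper's argument re-derives, in effect, a special case of what injectivity already provides; your version makes the corollary the triviality it really is once Lemma~\ref{Arquimedeanity restricts to monos} is in hand.
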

\begin{proof}
Suppose that there exists $a\in A$ such that $f(a)=0$ but $a\neq 0$. Since $A$ is arquimedean, by Lemma \ref{Arquimedeanity restricts to monos}, we get that $f$ is injective so, from \[f(a^2)= f(a)^{2}=0=f(a),\] we obtain that $a=a^{2}$. On the other hand, since $f(a)\neq 1$, again from Lemma \ref{Arquimedeanity restricts to monos}, we get that $a\neq 1$ and consequently $0<a<1$. So $A$ possesses a non trivial idempotent which, by $(iii)$ of Corollary \ref{Equivalent forms for arquimedeanity}, is absurd.
\end{proof}

\begin{rem}\label{morphisms between archimedean chains}
Observe that every morphism of MTL-algebras between finite archimedean MTL-chains is injective. Let $f:A\rightarrow B$ be a morphism of finite arquimedean MTL-chains. Since $A$ is archimedean, from Lemma \ref{Arquimedeanity restricts to monos}, it follows that $f$ is injective or $B={\bf 1}$. Since $B$ is archimedean, by assumption, it follows, from Proposition \ref{finite archimedean MTL-chains}, that $B$ cannot be trivial. Hence, $f$ must be injective.
\end{rem}
%Let $M$ be a MTL-algebra. A submultiplicative monoid $F$ of $M$ is called a filter if is an upset respect to the order of $M$. A filter $F$ of $M$ is \emph{prime} if $0\notin F$ and $x\vee y \in F$ entails $x\in F$ or $y\in F$, for every $x,y\in M$. The set of prime filters of a MTL-algebra $M$ ordered by the inclusion will be noted as $\mathrm{Spec}(M)$.
%\\

\section{Finite labeled forests}
\label{section Finite labeled forests}

It is a very known fact that if $M$ is an BL-algebra, then, its dual spectrum is a forest (c.f.  Proposition 6 of \cite{TU1999} ). Such relation has been used to establish functorial correspondences before between BL-algebras and certain kind of labeled forests\footnote{Actually in \cite{ABM2009} the authors use the name \emph{weighted} instead of labeled.} (c.f. \cite{ABM2009}). Motivated by these ideas, in this section we show that there exist a functor from the category of finite MTL-algebras to the category of finite labeled forests. To do so, we will take advantage of the intimate relation between idempotent elements and filters, that is given for the case of finite MTL-algebras. This particular condition allows us to describe the spectrum of a finite MTL-algebra in terms of its join irreducible idempotent elements, as well as charaterize the quotients that result arquimedean MTL-chains.
\\

\noindent
A \emph{forest} is a poset $X$ such that for every $a\in X$ the set
\[\downarrow a=\{ x\in X \mid x\leq a \} \]
is a totally ordered subset of $X$.
\\

\noindent
This definition is motivated by the following result, whose proof is similar to the dual of Proposition 6 of \cite{TU1999}.

\begin{lem}
	\label{Spec is a root system}
	Let $M$ be a (finite) MTL-algebra. Then $\mathrm{Spec}(M)^{op}$ is a (finite) forest.
\end{lem}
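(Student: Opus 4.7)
The plan is to unpack the definitions and give a standard prelinearity-plus-primeness argument. Recall that $\mathrm{Spec}(M)^{op}$ has the order $Q \leq P \iff Q \supseteq P$, so $\downarrow P$ in $\mathrm{Spec}(M)^{op}$ is the set $\{Q \in \mathrm{Spec}(M) \mid P \subseteq Q\}$ of prime filters \emph{containing} $P$, ordered by inclusion. The forest condition is then equivalent to: for every prime filter $P$, the collection of prime filters extending $P$ is a chain under inclusion. Finiteness of $\mathrm{Spec}(M)^{op}$ when $M$ is finite is immediate, since any filter is in particular a subset of the finite set $M$.

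To verify the chain condition, I would fix $P \in \mathrm{Spec}(M)$ and two prime filters $Q_{1}, Q_{2}$ with $P \subseteq Q_{1}$ and $P \subseteq Q_{2}$, and argue by contradiction. Assume $Q_{1} \not\subseteq Q_{2}$ and $Q_{2} \not\subseteq Q_{1}$, so there exist $x \in Q_{1}\setminus Q_{2}$ and $y \in Q_{2}\setminus Q_{1}$. Prelinearity gives $(x \to y) \vee (y \to x) = 1 \in P$, and since $P$ is prime, one of the two disjuncts lies in $P$, hence in both $Q_{1}$ and $Q_{2}$.

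If $x \to y \in P \subseteq Q_{1}$, then since $Q_{1}$ is a submultiplicative monoid we get $x \cdot (x \to y) \in Q_{1}$, and from residuation $x \cdot (x \to y) \leq y$; as $Q_{1}$ is an up-set, $y \in Q_{1}$, contradicting the choice of $y$. The case $y \to x \in P \subseteq Q_{2}$ is symmetric and yields $x \in Q_{2}$, again a contradiction. Therefore $Q_{1}$ and $Q_{2}$ are comparable, which is exactly the forest property.

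I do not anticipate any serious obstacle: the argument is essentially the one used for BL-algebras cited as Proposition 6 of \cite{TU1999}, and the only MTL-specific facts needed are prelinearity, residuation, the up-set condition in the definition of a filter, and closure of filters under the monoid operation. The mildest care point is keeping the direction of the order in $\mathrm{Spec}(M)^{op}$ straight when translating ``$\downarrow P$ is totally ordered'' into a statement about prime filters containing $P$.
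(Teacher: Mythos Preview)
Your argument is correct and is precisely the standard prelinearity-plus-primeness proof that the paper has in mind: the paper does not spell out a proof but simply refers to the dual of Proposition~6 in \cite{TU1999}, whose argument for BL-algebras is exactly the one you give and carries over verbatim to MTL-algebras since only prelinearity, residuation, and the filter axioms are used. Your handling of the order reversal in $\mathrm{Spec}(M)^{op}$ is also correct.
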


\noindent
A \emph{tree} is a forest with a least element. A \emph{p-morphism} is a morphism of posets $f:X\rightarrow Y$ satisfying the following property: given $x\in X$ and $y\in Y$ such that $y\leq f(x)$ there exists $z\in X$ such that $z\leq x$ and $f(z)=y$. Let $\fMTL$ be the algebraic category of finite MTL-algebras.  We write $\faMTL$ for the algebraic category of finite archimedean MTL-algebras and $\faMTLc$ for the full subcategory of finite archimedean MTL-chains. Let $\Skel$ be the skeleton of $\faMTLc$. A \emph{labeled forest} is a function $l:F \rightarrow \Skel$, such that $F$ is a forest and the collection of archimedean MTL-chains $\{l(i)\}_{i\in F}$ (up to isomorphism) shares the same neutral element $1$. Consider two labeled forests $l:F \rightarrow \Skel$ and $m:G \rightarrow \Skel$. A morphism $l\rightarrow m$ is a pair $(\varphi, \mathcal{F})$ such that $\varphi: F \rightarrow G$ is a p-morphism and $\mathcal{F}=\{f_{x}\}_{x\in F}$ is a family of injective morphisms $f_{x}: (m\circ \varphi) (x) \rightarrow l(x)$ of MTL-algebras.
\\

\noindent
Let $(\varphi, \mathcal{F}):l \rightarrow m$ and $(\psi, \mathcal{G}):m \rightarrow n$ be two morphism between labeled forests. We define the composition $(\varphi, \mathcal{F})(\psi, \mathcal{G}):l\rightarrow n$ as the pair $(\psi\varphi,\mathcal{M})$, where $\mathcal{M}$ is the family whose elements are the MTL-morphims $f_{x}g_{\varphi(x)}:n(\psi\varphi)(x)\rightarrow l(x)$ for every $x\in F$. We will call $\fLF$ the category of labeled forests and its morphisms. The details of checking that $\fLF$ is a category are left to the reader.
\\

\noindent
Let $M$ be an MTL-algebra. We write $\mathcal{I}(M)$ for the poset of idempotent elements of $M$; i.e., \[\mathcal{I}(M) := \{x \in M \ | \ x^2 = x \}.\]

\begin{lem}
	\label{Filterequivalent}
In any MTL-algebra $M$, there are equivalent,
\begin{enumerate}
	\item[i.] $a \in \mathcal{I}(M)$, and
	\item[ii.] $\langle a \rangle = \ \uparrow a$
\end{enumerate}
\end{lem}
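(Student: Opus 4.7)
The plan is to prove the two implications separately, and both are short once we unpack the definition of $\langle a \rangle$ as the set of all $b \in M$ with $a^n \leq b$ for some $n \in \mathbb{N}$.

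For the implication (i) $\Rightarrow$ (ii), I would first observe that if $a^2 = a$, then a trivial induction shows $a^n = a$ for every $n \geq 1$. Consequently, the condition ``$a^n \leq b$ for some $n$'' collapses to ``$a \leq b$'', and we read off directly that $\langle a \rangle = \uparrow a$.

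For the implication (ii) $\Rightarrow$ (i), the key observation is that in any integral commutative residuated lattice one has $a \cdot a \leq a \cdot 1 = a$, so the inequality $a^2 \leq a$ is automatic. The reverse inequality is where we use the hypothesis: $a^2 \in \langle a \rangle$ because $a^2 \leq a^2$ witnesses membership with $n = 2$, and the assumption $\langle a \rangle = \uparrow a$ then gives $a^2 \in \uparrow a$, that is, $a \leq a^2$. Combining both inequalities yields $a^2 = a$, so $a \in \mathcal{I}(M)$.

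I do not foresee a real obstacle here; the statement is essentially a bookkeeping lemma connecting the two natural ways of ``generating something upwards'' from $a$. The only mildly subtle point is recognizing that one does not need to argue that $\uparrow a$ is closed under multiplication (which would in fact be equivalent to idempotency of $a$); it suffices to test the single element $a^2$ against the equality $\langle a \rangle = \uparrow a$.
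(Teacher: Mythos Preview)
Your proposal is correct and follows essentially the same route as the paper's proof: for (ii) $\Rightarrow$ (i) you both observe $a^2 \in \langle a \rangle = \uparrow a$ together with the automatic inequality $a^2 \leq a$, and for (i) $\Rightarrow$ (ii) the paper simply says it ``follows directly from the definition,'' which is exactly the $a^n = a$ observation you spell out.
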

\begin{proof}
 	Let us assume that $\langle a \rangle = \ \uparrow a$. Since $a^{2}\in \langle a \rangle$ then  $a^{2}\in \uparrow a$, so $a\leq a^{2}$. Finally, since $M$ is negatively ordered,  $a^{2}\leq a$. Therefore $a^{2}=a$. The last part of the proof follows directly from the definition.
\end{proof}

\begin{coro}\label{idempotentdeterminesfilters}
Let $M$ be a finite MTL-algebra and $F \subseteq M$ a filter in $M$. There exists a unique $a \in \mathcal{I}(M)$ such that $F = \uparrow a$.
\end{coro}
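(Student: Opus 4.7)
The plan is to exploit finiteness to locate a canonical generator for $F$, then verify it is idempotent and unique. First I would observe that in any MTL-algebra the product is below the meet, since $xy \leq x \cdot 1 = x$ and $xy \leq y$ jointly give $xy \leq x \wedge y$. Consequently, any filter $F$ is closed under binary meets: if $x,y\in F$, then $xy\in F$ because $F$ is a submultiplicative monoid, and $xy\leq x\wedge y$ together with $F$ being an upset forces $x\wedge y\in F$.

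Next I would use the hypothesis that $M$ is finite. Since $F$ is then a finite meet-closed nonempty subset (it contains $1$), the element
\[
a := \bigwedge F
\]
belongs to $F$. Because $a$ is the minimum of $F$ and $F$ is an upset, the two inclusions $F\subseteq \uparrow a$ and $\uparrow a\subseteq F$ are immediate, so $F = \uparrow a$.

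To see that $a$ is idempotent I would appeal to Lemma \ref{Filterequivalent}. Clearly $\uparrow a\subseteq \langle a\rangle$, by taking $n=1$ in the definition of $\langle a\rangle$. For the reverse inclusion, any $b$ with $a^{n}\leq b$ lies in $F$, since $a^{n}\in F$ by submultiplicativity and $F$ is upward closed; thus $b\in F=\uparrow a$. Therefore $\langle a\rangle = \uparrow a$, and Lemma \ref{Filterequivalent} yields $a\in\mathcal{I}(M)$. Alternatively, and more directly, $a^{2}\in F$ gives $a\leq a^{2}$, while integrality of $M$ gives $a^{2}\leq a$, so $a^{2}=a$.

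Uniqueness is then formal: if $F = \uparrow a = \uparrow b$ with $a,b\in\mathcal{I}(M)$, then $a\in\uparrow b$ and $b\in\uparrow a$ force $a=b$ by antisymmetry. I do not expect a significant obstacle here; the only subtle point is the meet-closure of filters in the MTL setting, which hinges on the inequality $xy\leq x\wedge y$ and the finiteness of $M$ to guarantee that the infimum of $F$ actually exists in $F$.
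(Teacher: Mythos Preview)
Your proof is correct and follows essentially the same approach as the paper: use finiteness to find a least element $a$ of $F$, conclude $F=\uparrow a$, and then invoke Lemma~\ref{Filterequivalent} (or the direct observation $a^{2}\in F\Rightarrow a\leq a^{2}$) to obtain $a\in\mathcal{I}(M)$, with uniqueness by antisymmetry. Your argument is simply more explicit about why the minimum of $F$ exists, via meet-closure from $xy\leq x\wedge y$, whereas the paper abbreviates this step by asserting that every filter in a finite MTL-algebra is principal.
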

\begin{proof}
 Since $M$ is finite, every filter $F \subseteq M$ is principal, so by Lemma \ref{Filterequivalent}, $F=\uparrow a$ for some $a\in \mathcal{I}(M)$. If there exists $a'\in \mathcal{I}(M)$ such that $\uparrow a= \uparrow a'$, then $a\leq a'$ and $a'\leq a$.
\end{proof}
\noindent
Let $M$ be a finite MTL-algebra. From the Corollary \ref{idempotentdeterminesfilters}, it follows that there is a bijection between $\mathcal{I}(M)$ and the filters of $M$. Let $\nzid$ the subposet of join irreducible elements of $\mathcal{I}(M)$. A direct application of Birkhoff's duality brings the following result.

\begin{coro}\label{joinirreducibledeterminesprimefilters}
	Let $M$ be a finite MTL-algebra and $P \in \mathrm{Spec}(M)$. Then, there exists a unique $e \in \nzid$ such that $P = \uparrow e$.
\end{coro}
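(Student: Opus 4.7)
The plan is to deduce this essentially directly from Corollary \ref{idempotentdeterminesfilters} together with the defining property of prime filters. Since $P$ is in particular a filter of $M$, that corollary already yields a unique $e \in \mathcal{I}(M)$ with $P = \uparrow e$; uniqueness inside $\nzid$ is automatic from this, so the substantive content is verifying that the idempotent $e$ produced lies in $\nzid$, i.e.\ is join-irreducible and distinct from the bottom.

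First, primeness of $P$ gives $0 \notin P = \uparrow e$, which forces $e \neq 0$ and thus excludes the bottom of $\mathcal{I}(M)$. Next, I would consider a hypothetical decomposition $e = a \vee b$ with $a, b \in \mathcal{I}(M)$; note in passing that $\mathcal{I}(M)$ is closed under $\vee$, since by residuation and integrality one computes $(a \vee b)^{2} = a^{2} \vee ab \vee b^{2} = a \vee b$ whenever $a$ and $b$ are idempotent. Now $a \vee b = e$ lies in $P$, so by primeness either $a \in P$ or $b \in P$; assuming without loss of generality the former, $e \leq a$. Combining with $a \leq a \vee b = e$ yields $a = e$, establishing join-irreducibility of $e$ in $\mathcal{I}(M)$.

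The paper's remark about Birkhoff's duality is the broader reason this works cleanly: $\mathcal{I}(M)$ is a finite distributive sublattice of $M$ (distributivity of $M$ following from prelinearity), and the bijection $e \mapsto \uparrow e$ of Corollary \ref{idempotentdeterminesfilters} is order-reversing, so prime filters of $M$ correspond bijectively to join-irreducible elements of $\mathcal{I}(M)$. I anticipate no real obstacle here; the only point deserving care is confirming that join decompositions of $e$ taken inside $\mathcal{I}(M)$ may be freely treated as join decompositions inside $M$ (so that primeness of $P$ applies verbatim), which is immediate from the closure of $\mathcal{I}(M)$ under $\vee$ noted above.
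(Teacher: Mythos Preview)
Your argument is correct and amounts to the elementary unpacking of exactly what the paper invokes: the text offers no detailed proof here, merely stating that the result is ``a direct application of Birkhoff's duality'' applied to the bijection of Corollary~\ref{idempotentdeterminesfilters}. Your direct verification that the idempotent generating a prime filter must be nonzero and join-irreducible in $\mathcal{I}(M)$ is precisely the content of that duality restricted to this situation, so the approaches coincide.
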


\begin{coro}
\label{Spec iso to JI}
	Let $M$ be a finite MTL-algebra. The posets $\mathrm{Spec}(M)^{op}$ and $\nzid$ are isomorphic.
\end{coro}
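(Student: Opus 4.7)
The plan is to promote the bijection from Corollary \ref{joinirreducibledeterminesprimefilters} to an order-isomorphism $\Phi: \mathrm{Spec}(M)^{op} \to \nzid$. Concretely, define $\Phi(P) = e$, where $e$ is the unique join-irreducible idempotent such that $P = \ \uparrow e$; its inverse sends a join-irreducible idempotent $e$ to the prime filter $\uparrow e$. Bijectivity is already given, so the only remaining content is that $\Phi$ is order-preserving and reflects order.

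For the order-preserving part, suppose $P, Q \in \mathrm{Spec}(M)$ with $P \leq Q$ in $\mathrm{Spec}(M)^{op}$, i.e., $Q \subseteq P$ as filters. Writing $P = \ \uparrow e_P$ and $Q = \ \uparrow e_Q$, the inclusion $\uparrow e_Q \subseteq \ \uparrow e_P$ applied to the element $e_Q \in \ \uparrow e_Q$ yields $e_P \leq e_Q$, so $\Phi(P) \leq \Phi(Q)$ in $\nzid$. Conversely, if $e_P \leq e_Q$ in $\nzid$, then any $x \geq e_Q$ also satisfies $x \geq e_P$, so $\uparrow e_Q \subseteq \ \uparrow e_P$, i.e., $Q \subseteq P$, which is exactly $P \leq Q$ in $\mathrm{Spec}(M)^{op}$.

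Combining both directions gives that $\Phi$ is a poset isomorphism, establishing the claim. There is no real obstacle here: once Corollary \ref{joinirreducibledeterminesprimefilters} identifies each prime filter with a unique join-irreducible idempotent via principal up-sets, the order-reversal between set inclusion of principal up-sets and the order of their generators is automatic, and the passage to $\mathrm{Spec}(M)^{op}$ absorbs that reversal. The only point worth flagging carefully is the correct direction of the inequalities (a very easy place to drop an $\mathrm{op}$), so I would present the two implications symmetrically to make this transparent.
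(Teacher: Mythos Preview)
Your proof is correct and follows essentially the same approach as the paper: the paper defines $\varphi:\nzid\to\mathrm{Spec}(M)$ by $\varphi(e)=\ \uparrow e$, invokes Corollary~\ref{joinirreducibledeterminesprimefilters} for bijectivity, and declares the antimonotonicity of $\varphi$ and $\varphi^{-1}$ straightforward, which is exactly the order computation you spell out (just read in the inverse direction and with the $\mathrm{op}$ absorbed).
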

\begin{proof}
 Let $\varphi: \nzid \rightarrow Spec(M)$ be the mapping defined as $\varphi(e)=\uparrow e$. From Corollary \ref{joinirreducibledeterminesprimefilters}, it follows that $\varphi$ is bijective. The proofs of the antimonotonicity of $\varphi$ and $\varphi^{-1}$ are straightforward.
\end{proof}

\begin{lem}\label{Closurejoinirreducibles}
Let $M$ be a finite MTL-algebra and $x\in \mathcal{I}(M)$ such that $x\neq 0$. If there exists some $k\in \nzid$ such that $x\leq k$ then $x$ is join irreducible.
\end{lem}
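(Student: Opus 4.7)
The plan is to write $x$ as an irredundant join of join-irreducible idempotents and then use the forest structure of $\nzid$ to force this join to consist of a single element.

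First, I would observe that $\mathcal{I}(M)$ is a finite distributive lattice. By Corollary \ref{idempotentdeterminesfilters} the map $a \mapsto\, \uparrow\! a$ is an order-reversing bijection between $\mathcal{I}(M)$ and the lattice of filters of $M$, and the lattice of filters of any MTL-algebra is distributive (it is in bijection with the congruence lattice of $M$, and $\MTL$ is congruence distributive). Being the dual of a finite distributive lattice, $\mathcal{I}(M)$ is itself a finite distributive lattice, so Birkhoff's representation theorem applies.

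Second, using Birkhoff's theorem on the nonzero element $x \in \mathcal{I}(M)$, I can write $x = e_1 \vee \cdots \vee e_n$ for some nonempty antichain $\{e_1, \ldots, e_n\} \subseteq \nzid$. From $e_i \leq x \leq k$ for every $i$, this antichain is contained in the downset of $k$ inside $\nzid$. By Lemma \ref{Spec is a root system} together with Corollary \ref{Spec iso to JI}, the poset $\nzid$ is a forest, so the downset of any element (in particular, of $k$) is totally ordered. An antichain contained in a chain has at most one element; combined with $n \geq 1$, this forces $n = 1$, hence $x = e_1 \in \nzid$, which is what we wanted.

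The only point that does not follow immediately from results already developed in the paper is the distributivity of $\mathcal{I}(M)$, for which one must invoke the standard fact about filter lattices of MTL-algebras; everything else is a routine combination of Birkhoff's representation with the forest property of $\nzid$. If one wished to bypass the distributivity statement, an equivalent direct route would be: assume $x = a \vee b$ with $a, b \in \mathcal{I}(M)$ strictly below $x$, translate this via $a,b \mapsto\, \uparrow\! a,\, \uparrow\! b$ into $\uparrow\! x =\, \uparrow\! a\, \cap\, \uparrow\! b$, and use that the prime filters above $\uparrow\! k$ form a chain in $\mathrm{Spec}(M)$ to derive a contradiction.
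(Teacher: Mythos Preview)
Your proof is correct, and it takes a genuinely different route from the paper's. The paper argues directly at the level of filters: starting from $\uparrow k\subseteq\uparrow x$ and the primality of $\uparrow k$ (which is what ``$k\in\nzid$'' gives via Corollary~\ref{joinirreducibledeterminesprimefilters}), it tests $x$ against an arbitrary join $a\vee b$ with $a,b\in M$ and pushes the failure of $x\leq a$ and $x\leq b$ down to a failure of $k\leq a\vee b$. Your argument instead stays entirely inside the lattice $\mathcal{I}(M)$: you write $x$ as an irredundant join of elements of $\nzid$ and then use the forest structure of $\nzid$ (Lemma~\ref{Spec is a root system} plus Corollary~\ref{Spec iso to JI}) to force the antichain of joinands to collapse to a single element. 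This is a more structural proof that makes transparent \emph{why} the statement holds: it is really the assertion that in the forest $\nzid$, the downset of any point is a chain, so any nonzero idempotent below $k$ has a unique join-irreducible decomposition.

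Two small remarks. First, distributivity of $\mathcal{I}(M)$ is correct as you argue it, but it is not strictly needed: in \emph{any} finite lattice every nonzero element is a join of join-irreducibles, and one can always prune to an antichain, so your main argument goes through without the appeal to congruence distributivity. Second, your suggested alternative at the end (translating $x=a\vee b$ to $\uparrow x=\uparrow a\cap\uparrow b$ and using that primes above $\uparrow k$ form a chain) is essentially the dual of your main argument; to finish it one still needs that every filter containing $\uparrow k$ is an intersection of primes containing $\uparrow k$, which again comes down to the finite-lattice decomposition you already used.
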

\begin{proof}
Suppose that $0<x\leq k$ for some $k\in \nzid$, with $x\in \mathcal{I}(M)$, then $\uparrow k\subseteq \uparrow x$. Suppose that $x\leq a\vee b$ but $x\nleq a,b$ for some $a,b\in M$, then $a,b\notin \uparrow x$ implies that $a,b \notin \uparrow k$ so $k \nleq a\vee b$, since $k\in \nzid$. Thereby $x\nleq a\vee b$, which is absurd, by assumption.
\end{proof}

\begin{rem}\label{chains are characterized by joinirreducibles}
Let $M$ be a finite MTL-algebra. Observe that, from Lemmas $1.2.8$ and $6.1.2$ of \cite{Z2016} it follows that $M/\uparrow e$ is a MTL-chain if and only if $e\in \JI{M}$.
\end{rem}
\noindent
We write $m(M)$ for the minimal elements of $\nzid$.

\begin{lem}\label{uniqejoinirreducible}
Let $M$ be an MTL-algebra and $e\in \nzid$. Then, there exists a unique $k\in \nzid \cup \{0\}$ such that $k\prec e$, where $\prec$ denotes the covering relation in posets.
\end{lem}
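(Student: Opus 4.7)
The plan is to exploit the forest structure on $\nzid$ provided by Corollary \ref{Spec iso to JI} together with Lemma \ref{Spec is a root system}. First I would observe that the order on $\nzid$ inherited from $M$ makes $\nzid$ order-isomorphic to $\mathrm{Spec}(M)^{op}$, so $\nzid$ is a finite forest (the lemma is implicitly in the finite setting of this section). In particular, for the given $e$, the strict downset $D(e) := \{x \in \nzid : x < e\}$ is a finite chain in $\nzid$. I would also record at the outset that, by Lemma \ref{Closurejoinirreducibles}, every nonzero idempotent of $M$ lying below $e$ already belongs to $\nzid$, so covering in $\nzid \cup \{0\}$ can be tested against all nonzero idempotents below $e$ without loss.

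For existence I would split into two cases. If $D(e) = \emptyset$, then no join-irreducible idempotent sits strictly below $e$, and the preparatory remark promotes this to: no nonzero idempotent at all lies strictly below $e$. Hence $0 \prec e$ in $\nzid \cup \{0\}$. If $D(e) \neq \emptyset$, then being a nonempty finite chain it admits a maximum $k \in \nzid$. Any $c \in \nzid \cup \{0\}$ with $k < c < e$ would be a nonzero element of $D(e)$ strictly greater than $k$, contradicting the maximality of $k$; so $k \prec e$.

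Uniqueness should then be immediate from the linear structure of $\downarrow e \cap \nzid$. If $k, k' \in \nzid \cup \{0\}$ both cover $e$, then either one of them equals $0$, in which case the other, being a nonzero element of $\nzid$ strictly below $e$, sits strictly between $0$ and $e$ and violates $0 \prec e$; or both lie in $\nzid$, in which case they are comparable in the chain $D(e)$ and a strict inequality between them would place the larger one strictly between the smaller one and $e$, again contradicting the covering hypothesis. The only substantive step is transferring the forest property from $\mathrm{Spec}(M)^{op}$ to $\nzid$ via Corollary \ref{Spec iso to JI}; once that is in hand the argument is a short finite-chain exercise, and I do not expect a genuine obstacle.
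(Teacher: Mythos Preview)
Your proposal is correct and follows essentially the same line as the paper: both split on whether $e$ is minimal in $\nzid$, invoke the forest structure coming from Lemma~\ref{Spec is a root system} and Corollary~\ref{Spec iso to JI} to see that $\downarrow e \cap \nzid$ is a finite chain, and take its coatom (or $0$) as $k$. Your version is slightly more careful, supplying an explicit uniqueness argument and noting via Lemma~\ref{Closurejoinirreducibles} that covering in $\nzid \cup \{0\}$ agrees with covering in $\mathcal{I}(M)$ below $e$, points the paper leaves implicit.
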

\begin{proof}
Let $e\in \nzid$, then either $e\in m(M)$ or $e\notin m(M)$. In the first case, the result follows, since $0\prec e$. In the second case, by Lemma \ref{Spec is a root system} and Corollary \ref{joinirreducibledeterminesprimefilters} we get that $\downarrow e \cap \nzid$ is a finite chain. If we consider $k$ as the coatom of the latter chain, the result holds.
\end{proof}
\noindent
Let $e\in \nzid$. In the following, we will write $a_{e}$ to denote the join irreducible element associated to $e$ in Lemma \ref{uniqejoinirreducible}. Note that $a_{e}=0$ if and only if $e\in m(M)$.

\begin{lem}\label{minimalcharactirizesarchimedean}
Let $M$ be a finite MTL-algebra and $e\in \nzid$. Then $M/ \uparrow e$ is archimedean if and only if $e\in m(M)$.
\end{lem}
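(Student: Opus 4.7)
The plan is to reduce both directions to the characterization of archimedeanity via the absence of nontrivial idempotents, given by Corollary \ref{Equivalent forms for arquimedeanity}, applied to the quotient $M/\uparrow e$. Since $e \in \nzid$, Remark \ref{chains are characterized by joinirreducibles} says $M/\uparrow e$ is an MTL-chain; it is nontrivial because $e \neq 0$, so $\uparrow e$ is a proper filter. Thus the lemma reduces to deciding whether $M/\uparrow e$ admits a nontrivial idempotent.

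For the implication $e \in m(M) \Rightarrow M/\uparrow e$ archimedean, I would argue that $\uparrow e$ is a maximal filter in $M$. Indeed, if $\uparrow e \subsetneq F$ for some proper filter $F$, then by Corollary \ref{idempotentdeterminesfilters} one has $F = \uparrow f$ for some $f \in \mathcal{I}(M)$ with $f < e$; since $F$ is proper, $f > 0$, and then Lemma \ref{Closurejoinirreducibles} forces $f \in \nzid$, contradicting the minimality of $e$ in $\nzid$. By the correspondence between filters of $M$ containing $\uparrow e$ and filters of $M/\uparrow e$, the quotient has only the trivial filters and is therefore simple. By the equivalence (i)$\Leftrightarrow$(ii) of Corollary \ref{Equivalent forms for arquimedeanity}, $M/\uparrow e$ is archimedean.

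For the reverse implication, I would proceed by contrapositive, assuming $e \notin m(M)$. By Lemma \ref{uniqejoinirreducible} the predecessor $a_{e}$ lies in $\nzid$, so it is a nonzero idempotent with $a_{e} < e$. The class $[a_{e}]$ in $M/\uparrow e$ is evidently idempotent, and $[a_{e}] \neq [1]$ because $a_{e} \notin \uparrow e$. To exhibit a nontrivial idempotent I must verify that $[a_{e}] \neq [0]$, which is the delicate step. My plan is to exploit the identity $a_{e} \cdot e = a_{e}$, which follows from $a_{e} = a_{e}^{2} \leq a_{e} \cdot e \leq a_{e}$ via $a_{e} \leq e$ and integrality; if $[a_{e}] = [0]$, then $a_{e} \rightarrow 0 \geq e$, whence $a_{e} = a_{e} \cdot e \leq a_{e} \cdot (a_{e} \rightarrow 0) \leq 0$, contradicting $a_{e} \neq 0$. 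Thus $[a_{e}]$ is a nontrivial idempotent in $M/\uparrow e$ and, by (i)$\Leftrightarrow$(iii) of Corollary \ref{Equivalent forms for arquimedeanity}, the quotient fails to be archimedean.

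The only step I expect to require genuine care is the verification that $[a_{e}] \neq [0]$; every other piece reduces to routine manipulations with the idempotent/filter correspondence already developed in this section. The identity $a_{e} \cdot e = a_{e}$ is the algebraic fact that prevents idempotents lying strictly below $e$ from being collapsed after quotienting by $\uparrow e$, and it is precisely what turns the poset-theoretic hypothesis $e \notin m(M)$ into a genuine obstruction to archimedeanity.
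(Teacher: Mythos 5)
Your proof is correct, and it diverges from the paper's in an interesting way in the forward direction. For $e\in m(M)\Rightarrow M/\uparrow e$ archimedean, the paper argues by contradiction at the level of idempotents: it takes a hypothetical nontrivial idempotent $[k]$ in the quotient, lifts it to the idempotent $ek$ of $M$ (via the computation $(ek)^2=ek^2=ek$), and uses Lemma \ref{Closurejoinirreducibles} to land in $\nzid$ strictly below $e$, contradicting minimality. You instead stay at the level of filters: Corollary \ref{idempotentdeterminesfilters} plus Lemma \ref{Closurejoinirreducibles} show $\uparrow e$ is a maximal proper filter, so the quotient is simple and item (ii) of Corollary \ref{Equivalent forms for arquimedeanity} finishes. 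Your route is arguably cleaner since it avoids the lifting computation, at the modest cost of invoking the (standard, but not spelled out in the paper) correspondence between filters of $M$ containing $\uparrow e$ and filters of $M/\uparrow e$. For the converse, the two arguments are essentially the same computation in different clothing: the paper takes an arbitrary $k\in\nzid$ with $k\leq e$ and rules out $[k]=[0]$ via $k\leq e(e\to 0)\leq 0$, while you work contrapositively with the specific witness $a_e$ and rule out $[a_e]=[0]$ via $a_e=a_e\cdot e\leq a_e\cdot(a_e\to 0)\leq 0$; both hinge on idempotency preventing collapse to $[0]$. Your identification of $[a_e]\neq[0]$ as the delicate step matches exactly where the paper also does its only nontrivial algebra.
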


\begin{proof}
Suppose $e\in m(M)$. From Remark \ref{chains are characterized by joinirreducibles}, we get that $M/\uparrow e$ is an MTL-chain. Now, if $M/\uparrow e$ is not archimedean, by Proposition \ref{finite archimedean MTL-chains}, there exists some $[k]\in \mathcal{I}(M/\uparrow e)$ such that $[k]^{2}=[k^{2}]=[k]$ with $[k]$ different than $[0]$ and $[1]$. So, since $ek^{2}\leq k$, $ek\leq ek^{2}$ and $e\in \mathcal{I}(M)$ we get that $(ek)^{2}=ek^{2}=ek$. Thus $ek\in \mathcal{I}(M)$. Observe that due to $[k]\neq 0$ we obtain that $ek\neq 0$. Since $ek\leq e$, from Lemma \ref{Closurejoinirreducibles} we get that $ek\in \nzid$. In consequence, $ek\leq e$, which is absurd because $e\in m(M)$.
\\
On the other hand, let assume that $M/\uparrow e$ is archimedean. If there is some $k\in \nzid$ such that $k\leq e$. Thus $[k]\in \mathcal{I}(M/\uparrow e)$. So by Lemma \ref{finite archimedean MTL-chains}, $[k]=[0]$ or $[k]=[1]=[e]$. If $[k]=[0]$ we get that $ek\leq 0$ then $k\leq e\rightarrow 0$. Since $k\in \mathcal{I}(M)$ and the product is monotone, $k\leq e(e\rightarrow 0)\leq 0$. Thereby $k=0$, which is absurd because $k\in \nzid$. In the case of $[k]=[e]$ we obtain that $e\leq k$. Since $k\leq e$ by assumption then we conclude that $e=k$. Therefore, $e\in m(M)$.

\end{proof}

%{\color{red} Let $M$ a finite MTL-algebra and $\eta_{M}:M\rightarrow LM$ its reticulation. Consider the function $l_{M}:\mathcal{J}(\mathcal{I}(M)) \rightarrow \Skel$ defined as $l_{M}(e)=\eta_{M}(e)=\{x\in M\mid x^{n}=e\; \text{for some}\; n\geq 0\}$.  }

\begin{rem}\label{filtersareMTL}
Let $M$ be a finite MTL-algebra and $F\subseteq M$ a filter. Let us check that $(F,\vee,\wedge,1,x)$ is an MTL-algebra such that $0_{F}=x$. By definition of $F$, $(F,\cdot,1)$ is a commutative monoid so, if $a,b\in F$ then $ab\in F$. The integrality of $M$ implies that $a\leq b\rightarrow a$ and $b\leq a\rightarrow b$, so since $F$ is an up set of $M$ then for every $a,b\in F$, we get that $a\rightarrow b,b\rightarrow a\in F$. Similarly, since $ab\leq a\wedge b$, by applying the last argument we get that $a\wedge b\in F$.  The proof for $a\vee b\in F$ is the same. Finally, due to Corollary \ref{idempotentdeterminesfilters} there exists a unique $x\in \mathcal{I}(M)$ such that $F=\uparrow x$, which is equivalent to say that $x=0_{F}$.
\end{rem}

\begin{lem}\label{labelingfMTL}
Let $M$ be a finite MTL-algebra, then $(\uparrow a_{e})/(\uparrow e)$ is an archimedean MTL chain. for every $e\in \nzid$.
\end{lem}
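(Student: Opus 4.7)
My plan is to show first that $(\uparrow a_{e})/(\uparrow e)$ is an MTL-chain, and then to verify archimedeanity by ruling out non-trivial idempotents, invoking Proposition \ref{finite archimedean MTL-chains}. I would first observe that $\uparrow a_{e}$ is an MTL-algebra by Remark \ref{filtersareMTL} (with $0_{\uparrow a_{e}} = a_{e}$), and that $\uparrow e \subseteq \uparrow a_{e}$ is a filter of this MTL-algebra, so the quotient makes sense.

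To see it is a chain, I would consider the composition $\uparrow a_{e} \hookrightarrow M \twoheadrightarrow M/\uparrow e$. Its kernel consists of elements of $\uparrow a_{e}$ which lie in $\uparrow e$, that is, exactly $\uparrow e$ itself. By the universal property of quotients, this composition factors through $(\uparrow a_{e})/(\uparrow e)$ via an injective MTL-morphism into $M/\uparrow e$. Because $e \in \nzid$, Remark \ref{chains are characterized by joinirreducibles} guarantees that $M/\uparrow e$ is an MTL-chain, whence $(\uparrow a_{e})/(\uparrow e)$, as a subalgebra of a chain, is itself a chain.

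Now the substantive step: proving no non-trivial idempotents. Suppose $[x]$ were a non-trivial idempotent of $(\uparrow a_{e})/(\uparrow e)$. Unpacking the congruence, $[x]^{2}=[x]$ yields $ex \leq x^{2}$, while $[x] \neq [1]$ means $e \nleq x$ and $[x] \neq [0]=[a_e]$ means $ex \nleq a_{e}$ (the other direction $ea_e \leq a_e \leq x$ holds automatically). A short calculation using $e^{2}=e$ and $x^{2}\leq x$ shows $(ex)^{2} = ex^{2}$; combined with $ex \leq x^{2}$, which gives $ex = e \cdot ex \leq e x^{2}$, we get $ex \in \mathcal{I}(M)$, and clearly $ex \leq e$.

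The case analysis (which I expect to be the main obstacle) then goes as follows. If $ex = 0$ then $ex \leq a_{e}$, contradicting $[x] \neq [0]$. Otherwise, by Lemma \ref{Closurejoinirreducibles}, $ex \in \nzid$. But by Lemma \ref{uniqejoinirreducible}, the chain $\downarrow e \cap \nzid$ contains only $e$ (and possibly $a_{e}$, if $a_{e} \neq 0$), so any join irreducible idempotent below $e$ must equal $e$ or lie below $a_{e}$. The first possibility forces $e \leq x$, contradicting $[x] \neq [1]$; the second forces $ex \leq a_{e}$, contradicting $[x] \neq [0]$. Every case collapses, so no non-trivial idempotent exists, and Proposition \ref{finite archimedean MTL-chains} finishes the proof.
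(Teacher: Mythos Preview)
Your argument is essentially correct but follows a different path from the paper. The paper's proof is a two-line application of Lemma~\ref{minimalcharactirizesarchimedean}: once one observes that $\uparrow a_{e}$ is a finite MTL-algebra with bottom $a_{e}$ and that $a_{e}\prec e$ forces $e\in m(\uparrow a_{e})$, that lemma immediately gives both that the quotient is a chain and that it is archimedean. Your approach instead reproves the content of Lemma~\ref{minimalcharactirizesarchimedean} in this specific situation---first embedding into $M/\uparrow e$ to get the chain property, then arguing directly against non-trivial idempotents via Proposition~\ref{finite archimedean MTL-chains} and Lemma~\ref{Closurejoinirreducibles}. This is longer but entirely self-contained, and it makes the mechanism (pushing idempotents down to $ex\in\nzid$ and then squeezing them between $a_{e}$ and $e$) more visible.

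Two small points to tighten. First, the composite $\uparrow a_{e}\hookrightarrow M\twoheadrightarrow M/\uparrow e$ is \emph{not} an MTL-morphism when $a_{e}\neq 0$, since in general $[a_{e}]\neq[0]$ in $M/\uparrow e$ (indeed $e\cdot a_{e}$ need not vanish). It is, however, an injective morphism of semihoops, and that is all you need: being totally ordered is an order-theoretic property, so the induced embedding of $(\uparrow a_{e})/(\uparrow e)$ into the chain $M/\uparrow e$ still forces the source to be a chain. Second, your sentence ``the chain $\downarrow e\cap\nzid$ contains only $e$ (and possibly $a_{e}$)'' is not literally true---that chain may be longer---but the conclusion you actually use is correct: since $a_{e}$ is the \emph{coatom} of $(\downarrow e\cap\nzid)\cup\{0\}$, any join-irreducible idempotent $\leq e$ is either $e$ itself or $\leq a_{e}$. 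With these two clarifications the proof stands.
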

\begin{proof}
Recall that by Remark \ref{filtersareMTL} and Lemma \ref{uniqejoinirreducible}, we get that $\uparrow a_{e}$ is a finite MTL-algebra whose least element is $a_{e}$. Since $a_{e}\prec e$, it  follows that $\uparrow e$ is a proper filter of $\uparrow a_{e}$ with $e\in m(\uparrow a_{e})$. Therefore, from Lemma \ref{minimalcharactirizesarchimedean} we get that $\uparrow a_{e}/\uparrow e$ is an archimedean MTL chain.
\end{proof}

\noindent
Let $M$ and $N$ be finite MTL-algebras and $f: M \to N$ a morphism of MTL-algebras. It is a known fact (c.f. \cite{MUR2008}) that the assignments $M \mapsto Spec(M)$ and $f\mapsto Spec(f)=f^{-1}$, determines a contravariant functor $\textbf{Spec}:\fMTL \rightarrow \fCoh$ from the category of finite MTL-algebras into the category of finite Coherent (or Spectral) spaces.
\\

\noindent
Let $\varphi_{M}$ be the isomorphism between $\nzid$ and $Spec(M)^{op}$ of Proposition \ref{Spec iso to JI}.

\begin{lem}
	\label{assignation p morphism}
	Let $M$ and $N$ be finite MTL-algebras and $f: M \to N$ an MTL-algebra morphism. There exists a unique p-morphism $f^\ast: \JI{N} \to \JI{M}$ making the following diagram
	\begin{displaymath}
	\xymatrix{
	\JI{N} \ar[r]^-{f^\ast} \ar[d]_-{\varphi_{N}} & \JI{M} \ar[d]^-{\varphi_{M}}
	\\
	Spec(N) \ar[r]_-{spec(f)} & Spec(M)
	}
	\end{displaymath}
	commute.
\end{lem}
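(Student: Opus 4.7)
The plan is to define $f^\ast$ explicitly using the bijection $\varphi_M$ between $\JI{M}$ and $\mathrm{Spec}(M)^{op}$, then verify that the resulting map is a p-morphism.

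For the definition, given $e' \in \JI{N}$, $\varphi_N(e') = \uparrow e'$ is a prime filter of $N$, and its preimage $f^{-1}(\uparrow e')$ is a prime filter of $M$ by a routine check (using $f(0)=0$ and that $f$ preserves $\vee$). By Corollary \ref{joinirreducibledeterminesprimefilters}, there is a unique $e \in \JI{M}$ such that $\uparrow e = f^{-1}(\uparrow e')$, and I set $f^\ast(e') := e$. Commutativity of the square and uniqueness of $f^\ast$ as a function are then forced by the bijectivity of $\varphi_M$. Monotonicity is also immediate: if $e_1' \leq e_2'$ in $\JI{N}$, then $\uparrow e_2' \subseteq \uparrow e_1'$, so $f^{-1}(\uparrow e_2') \subseteq f^{-1}(\uparrow e_1')$, which rewrites as $f^\ast(e_1') \leq f^\ast(e_2')$.

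The main obstacle is the back-condition for p-morphisms: given $e' \in \JI{N}$ and $k \in \JI{M}$ with $k \leq f^\ast(e')$, to produce $e'' \in \JI{N}$ with $e'' \leq e'$ and $f^\ast(e'') = k$. Write $e := f^\ast(e')$; the case $k=e$ is handled by $e''=e'$, so assume $k < e$. I would exploit the chain quotients guaranteed by Remark \ref{chains are characterized by joinirreducibles}: both $M/\uparrow e$ and $N/\uparrow e'$ are MTL-chains, and the identity $f^{-1}(\uparrow e') = \uparrow e$ shows that $f$ descends to a well-defined MTL-morphism $\bar f : M/\uparrow e \to N/\uparrow e'$ with trivial kernel, hence injective by Lemma \ref{Characterization of Monomorphisms}. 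Then $[k]$ is a non-zero idempotent of the chain $M/\uparrow e$, and by injectivity of $\bar f$, $\bar f([k]) = [f(k)]$ is a non-zero idempotent of the chain $N/\uparrow e'$. Its principal filter $\uparrow [f(k)]$ is therefore a prime filter of $N/\uparrow e'$, and its preimage $Q$ under the quotient map $\pi_{e'} : N \to N/\uparrow e'$ is a prime filter of $N$ containing $\uparrow e'$. Corollary \ref{joinirreducibledeterminesprimefilters} yields a unique $e'' \in \JI{N}$ with $Q = \uparrow e''$, and $\uparrow e'' \supseteq \uparrow e'$ gives $e'' \leq e'$.

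It remains to verify $f^{-1}(Q) = \uparrow k$. Using $\pi_{e'} \circ f = \bar f \circ \pi_e$, the set $f^{-1}(Q)$ equals $\pi_e^{-1}(\bar f^{-1}(\uparrow \bar f([k])))$. Injectivity of $\bar f$ together with the fact that both sides are chains collapses $\bar f^{-1}(\uparrow \bar f([k]))$ to $\uparrow [k]$, and the idempotency of $k$ together with $k \leq e$ yields $e \cdot k = k$, from which $\pi_e^{-1}(\uparrow [k]) = \uparrow k$ is immediate. Hence $f^\ast(e'') = k$, completing the argument.
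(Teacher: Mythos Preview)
Your proof is correct, and for the back-condition of the p-morphism property you take a genuinely different route from the paper. The paper argues combinatorially inside $\JI{N}$: given $e'\in\JI{N}$ and $k\in\JI{M}$ with $k\leq f^\ast(e')$, it sets $m=\min\{j\in\JI{N}\mid j\leq e',\ k\leq f^\ast(j)\}$ and then proves $f^\ast(m)=k$ by manipulating idempotent products of the form $m\cdot f(x)$, invoking Lemma~\ref{Closurejoinirreducibles} to keep these products inside $\JI{M}$. Your argument is more structural: you pass to the quotient chains $M/\!\uparrow\! e$ and $N/\!\uparrow\! e'$, observe that the induced map $\bar f$ is injective (because $f^{-1}(\uparrow e')=\uparrow e$ forces trivial kernel), push the idempotent $[k]$ forward to the nonzero idempotent $[f(k)]$, and pull back its principal filter to obtain $e''$. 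The verification $f^{-1}(\uparrow e'')=\uparrow k$ then reduces to two clean identities, $\bar f^{-1}(\uparrow \bar f([k]))=\uparrow[k]$ (injectivity on a chain) and $\pi_e^{-1}(\uparrow[k])=\uparrow k$ (using $ek=k$, which indeed follows from $k=k^2\leq ek\leq k$). Your approach avoids Lemma~\ref{Closurejoinirreducibles} entirely and makes the role of Remark~\ref{chains are characterized by joinirreducibles} explicit; the paper's approach is more self-contained at the level of idempotents and does not need to factor through quotients. Both are short; yours is perhaps easier to generalize, while the paper's stays closer to the lattice of filters.
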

\begin{proof}
	
	Since $\varphi_{M}$ is an isomorphism, we get that $f^\ast=\varphi_{M}^{-1}spec(f)\varphi_{N}$. Observe that this map is defined as $f^\ast(e)=min\; S_{e}$ where $S_{e}= f^{-1}(\uparrow e)\cap \JI{M}$. In order to check the monotonocity, let $e\leq g$ in $\JI{N}$, then $\uparrow g\subseteq \uparrow e$, thus $f^{-1}(\uparrow g)\subseteq f^{-1}(\uparrow e)$ so $\uparrow f^{\ast}(g)\subseteq \uparrow f^{\ast}(e)$. Thereby, $f^{\ast}(e)\subseteq f^{\ast}(g)$. It only remains to check that $f^\ast$ is a p-morphism. To do so, let $g\in \JI{N}$ and $e\in \JI{M}$ such that $g\leq f^{\ast}(e)$. Since $\JI{N}$ is finite, we can consider $m=min\; S$, with \[S= \{k\in \JI{N}\mid k\leq e, \; g\leq f^{\ast}(k) \}. \]
	
We will prove that $f^{\ast}(m)=g$. Let $x\in \mathcal{I}(N)$ be such that $g\leq x$. Since $e\leq f(g)$, it follows that $f(x)\neq 0$. Consider $\uparrow mf(x)$. Since $mf(x)\leq m$, by Lemma \ref{Closurejoinirreducibles} we get that $mf(x)\in \JI{M}$. Let us very that $g\leq f^{\ast}(mf(x))$ by checking $f^{-1}(\uparrow mf(x))\subseteq \uparrow g$. If $b\in f^{-1}(\uparrow mf(x))$, then $mf(x)\leq f(b)$ and hence $m\leq f(x)\rightarrow f(b)=f(x\rightarrow b)$. Therefore $x\rightarrow b\in f^{-1}(\uparrow m)$. By construction of $m$, we have that $g\leq f^{\ast}(m)$, so $f^{-1}(\uparrow m)\subseteq \uparrow g$. Consequently, $g\leq x\rightarrow b$. Since $g\leq x$, we obtain that $g\leq x(x\rightarrow b)\leq b$. Hence, $mf(x)=m$, because $mf(x)\in S$. Finally, since $mf(x)=m\leq f(x)$, it follows that $x\in f^{-1}(\uparrow m)$. Thus $\uparrow x\subseteq f^{-1}(\uparrow m)$ and since $x\leq g$, we obtain that $\uparrow g\subseteq f^{-1}(\uparrow m)$.Then we conclude that $f^{\ast}(m)\leq g$. This concludes the proof.

\end{proof}
\noindent
Let $M$ be a finite MTL-algebra and consider the function \[\etiq{M}:\JI{M}\rightarrow \Skel\] defined as $\etiq{M} (e)=\uparrow a_{e}/ \uparrow e$. Since from Lemma \ref{Spec is a root system} we know that $\nzid$ is a finite forest, $\etiq{M}$ is a finite labeled forest.
\\

\noindent
Let $\textbf{F}$ be a finite forest and $X\subseteq F$. We write $Min(X)$ for the minimal elements of $X$.

\begin{lem}\label{pmorphismspreserveminimals}
Let $f:X\rightarrow Y$ be a p-morphism. If $x\in Min(X)$ then $f(x)\in Min(Y)$.
\end{lem}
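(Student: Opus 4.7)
The plan is a straightforward application of the defining property of a p-morphism combined with the minimality of $x$. I would argue by contradiction: suppose $f(x)$ is not minimal in $Y$, so that there exists some $y\in Y$ with $y<f(x)$, i.e. $y\leq f(x)$ and $y\neq f(x)$.

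Then I would invoke the p-morphism property directly: since $y\leq f(x)$, there exists $z\in X$ with $z\leq x$ and $f(z)=y$. Since $x\in Min(X)$, the inequality $z\leq x$ forces $z=x$, and therefore $f(x)=f(z)=y$, contradicting $y\neq f(x)$. Hence no such $y$ can exist and $f(x)\in Min(Y)$.

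There is essentially no obstacle here; the only subtlety is remembering that ``minimal'' is to be read as minimal in the partial order (not as a least element), so the contradiction must be set up using a strict predecessor $y<f(x)$ rather than trying to compare $f(x)$ to an arbitrary element of $Y$. With that in mind, the argument is a one-line consequence of the lifting axiom for p-morphisms, so I would keep the proof to two or three sentences.
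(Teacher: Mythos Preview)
Your proposal is correct and essentially identical to the paper's proof: both assume a strict predecessor $y<f(x)$, lift it via the p-morphism property to some $z\leq x$ with $f(z)=y$, and derive a contradiction from the minimality of $x$. The only cosmetic difference is that the paper concludes $z\neq x$ (hence $x\notin Min(X)$), while you conclude $z=x$ (hence $y=f(x)$); these are the same contradiction read in opposite directions.
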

\begin{proof}
Let us assume $x\in Min(X)$, and suppose that there exists $y\in Y$ such that $y< f(x)$. Since $f$ is a p-morphism, there exists $z\in X$, with $z\leq x$ such that $y=f(z)$. Since $y\neq f(x)$, then $z\neq x$, so $x\notin Min(X)$. This fact is absurd by assumption.
\end{proof}

\begin{lem}\label{labelingMTLmorphisms}
Let $M$ and $N$ be finite MTL-algebras and $f: M \to N$ an MTL-algebra morphism. Then, for every $e\in \JI{N}$, $f$ determines a morphism $\overline{f}_{e}:\uparrow a_{f^{\ast}(e)} \to \uparrow a_{e}$ such that there exists a unique MTL-algebra morphism $f_{e}:\uparrow a_{f^{\ast}(e)}/ \uparrow f^{\ast}(e) \to \uparrow a_{e}/ \uparrow e$ making the diagram
\begin{displaymath}
\xymatrix{
\uparrow a_{f^{\ast}(e)} \ar[r]^-{\overline{f}_{e}} \ar[d] & \uparrow a_{e} \ar[d]
\\
\uparrow a_{f^{\ast}(e)}/ \uparrow f^{\ast}(e) \ar[r]_-{f_{e}} & \uparrow a_{e}/ \uparrow e
}
\end{displaymath}
\noindent
commute.
\end{lem}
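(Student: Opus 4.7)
The plan is to define $\overline{f}_e$ as a ``clamp-by-$a_e$'' of $f$, verify it is an MTL-algebra morphism, and then use the universal property of the quotient $\uparrow a_{e'} \to \uparrow a_{e'}/\uparrow e'$ to produce the unique $f_e$. Writing $e' := f^{\ast}(e)$, I would set
\[
\overline{f}_e(x) := f(x) \vee a_e,
\]
which lands in $\uparrow a_e$ since $f(x) \vee a_e \geq a_e$. The verifications that $\overline{f}_e$ preserves $1$, $\vee$, $\wedge$, and $\cdot$ are routine, relying on distributivity of the underlying lattice, on idempotency $a_e^2 = a_e$, and on the integrality identity $a_e \cdot f(x) \leq a_e$.

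To show preservation of the bottom, I would combine the minimality of $e'$ in $f^{-1}(\uparrow e) \cap \JI{M}$ from the proof of Lemma \ref{assignation p morphism} with the forest structure of $\JI{N}$ given by Lemma \ref{Spec is a root system}. If $a_{e'} \in \JI{M}$, then $a_{e'} < e'$ forces $f(a_{e'}) \ngeq e$; since $f(a_{e'})$ is idempotent, so is $f(a_{e'}) \cdot e$, and this is an idempotent strictly below $e$. By Lemma \ref{Closurejoinirreducibles} together with the forest property, $\mathcal{I}(N) \cap \downarrow e$ is a chain, hence $f(a_{e'}) \cdot e \leq a_e$. The case $a_{e'} = 0$ is immediate. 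It follows that $\overline{f}_e(a_{e'}) \sim_{\uparrow e} a_e$, so the bottom of $\uparrow a_{e'}$ is sent to the bottom of $\uparrow a_e$ at the quotient level.

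I expect the preservation of the residual $\to$ by $\overline{f}_e$ to be the main technical obstacle. Using the description $(p \vee a_e) \to_{\uparrow a_e} (q \vee a_e) = (p \to_N q) \vee a_e$ of implication in the filter, the required identity reduces to
\[
(f(x) \to_N f(y)) \vee a_e \;\equiv\; (f(x) \to_N (f(y) \vee a_e)) \vee a_e \pmod{\uparrow e}.
\]
One inclusion is automatic. The other splits into cases on whether $f(y) \geq a_e$ (trivial) or $f(y) < a_e$, and in the latter one has to rule out the pathological subcase $f(x) \leq a_e$ with $f(x) > f(y)$. Here I would exploit the simplicity of the archimedean MTL-chain $\uparrow a_{e'}/\uparrow e'$ (Lemma \ref{labelingfMTL} and Corollary \ref{Equivalent forms for arquimedeanity}), together with Remark \ref{morphisms between archimedean chains}, to conclude that the induced map into the chain $N/\uparrow e$ forces the equality modulo $\uparrow e$.

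Once $\overline{f}_e$ has been established as an MTL-algebra morphism, I would finish as follows. For any $x \in \uparrow e'$, $f(x) \geq f(e') \geq e$, hence $\pi_e(\overline{f}_e(x)) = [1]$, so the composite $\pi_e \circ \overline{f}_e$ sends the filter $\uparrow e'$ of $\uparrow a_{e'}$ to $1$. The universal property of the quotient recalled in Section \ref{Preliminaries} then produces the unique MTL-algebra morphism $f_e: \uparrow a_{e'}/\uparrow e' \to \uparrow a_e/\uparrow e$ such that $f_e \circ \pi_{e'} = \pi_e \circ \overline{f}_e$, which is precisely the commutativity expressed by the diagram.
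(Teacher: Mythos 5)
Your route is essentially the paper's: restrict $f$ to the filter $\uparrow a_{f^{\ast}(e)}$, arrange for it to land in $\uparrow a_{e}$, note that $x\in\,\uparrow f^{\ast}(e)$ forces $f(x)\in\,\uparrow e$ (since $f(f^{\ast}(e))\geq e$), and invoke the universal property of the quotient; that last step is word-for-word what the paper does. Two details differ, and both deserve attention. First, your $\overline{f}_{e}(x)=f(x)\vee a_{e}$ sends the bottom $a_{f^{\ast}(e)}$ of the MTL-algebra $\uparrow a_{f^{\ast}(e)}$ to $f(a_{f^{\ast}(e)})\vee a_{e}$, which you only show to be congruent to $a_{e}$ modulo $\uparrow e$; so your $\overline{f}_{e}$ is a semihoop morphism but not, as the statement literally requires, a morphism of MTL-algebras (these must preserve the bottom). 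The paper instead defines $\overline{f}_{e}(x)=f(x)$ for $x>a_{f^{\ast}(e)}$ and forces $\overline{f}_{e}(a_{f^{\ast}(e)})=a_{e}$ by fiat; your chain-of-idempotents argument (that $f(a_{f^{\ast}(e)})\cdot e$ is an idempotent strictly below $e$, hence $\leq a_{e}$) is a nice way to see that the discrepancy at the bottom is absorbed by the quotient, but you should either redefine the map at that single point or weaken the claim about $\overline{f}_{e}$. Second, the residual is not the obstacle you expect. By Remark \ref{filtersareMTL} the operations of $\uparrow a_{e}$ are the restrictions of those of $N$, and the well-definedness fact that $f$ carries $\uparrow a_{f^{\ast}(e)}$ into $\uparrow a_{e}$ away from the bottom (the paper gets this from $f^{\ast}(a_{e})\leq a_{f^{\ast}(e)}$ via Lemma \ref{assignation p morphism}) makes your clamp $\vee\,a_{e}$ a no-op there, so $\overline{f}_{e}(x\to y)=f(x)\to f(y)$ with no case analysis; the ``pathological subcase'' cannot occur. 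Moreover the identity $(p\vee a_{e})\to_{\uparrow a_{e}}(q\vee a_{e})=(p\to_{N}q)\vee a_{e}$ you propose to use is false in general: in a three-element chain $0<a_{e}<1$ with $a_{e}$ idempotent, taking $p=a_{e}$ and $q=0$ gives $1$ on the left and $a_{e}$ on the right. None of this changes the existence or uniqueness of $f_{e}$, which your final paragraph establishes exactly as the paper does.
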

\begin{proof}
Let $e\in \JI{N}$. Then $e\notin m(N)$ or $e\in m(N)$. In the first case, it follows that $a_{e} > 0_{N}$ and thus, $\uparrow a_{e}\subset N$. Since $a_{e}\leq e$ and $f^{\ast}$ is monotone then $f^{\ast}(a_{e})\leq f^{\ast}(e)$. Since $a_{f^{\ast}(e)}\prec f^{\ast}(e)$, $f^{\ast}(a_{e})\leq a_{f^{\ast}(e)}$. Thereby, $\uparrow a_{f^{\ast}(e)}\subseteq  \uparrow f^{\ast}(a_{e})$. Let us define $\overline{f}_{e}:\uparrow a_{f^{\ast}(e)} \to \uparrow a_{e}$ as

\begin{displaymath}
\overline{f}_{e}(x)= \left\{ \begin{array}{ll}
             f(x), &  x> a_{f^{\ast}(e)}\\
             a_{e}, &  x= a_{f^{\ast}(e)}
             \end{array}
   \right.
\end{displaymath}
\noindent
From Lemma \ref{assignation p morphism}, we get that $\uparrow f^{\ast}(a_{e})=f^{-1}(\uparrow a_{e})$ and $\overline{f}_{e}$ is a well defined MTL-morphism. Let us consider $a_{f^{\ast}(e)}<f^{\ast}(e)\leq x$, then, $\overline{f}_{e}(a_{f^{\ast}(e)})<\overline{f}_{e}(f^{\ast}(e))\leq \overline{f}_{e}(x)$ since $\overline{f}_{e}$ is monotone. By definition of $\overline{f}_{e}$ we obtain that $a_{e}<f(f^{\ast}(e))\leq f(x)$. Then, applying Lemma \ref{assignation p morphism}, we get that $e\leq f(f^{\ast}(e))$, so we can conclude that $e\leq f(x)$. This means that $[\overline{f}_{e}(x)]=[1]$ in $\uparrow a_{e}/ \uparrow e$. Hence, by the universal property of quotients in $\MTL$, there exists a unique MTL-morphism $f_{e}:\uparrow a_{f^{\ast}(e)}/ \uparrow f^{\ast}(e) \to \uparrow a_{e}/ \uparrow e$ making the diagram above commutes.

\noindent
Finally, if $e\in m(N)$, we get that $a_{e}=0_{N}$ and $\uparrow a_{e}=N$. Since $f^{\ast}$ is a p-morphism, due to Lemma \ref{pmorphismspreserveminimals}, $f^{\ast}(e)\in m(M)$, $a_{f^{\ast}(e)}=0_{M}$ and consequently, $\uparrow a_{f^{\ast}(e)}=M$. Let $\overline{f}_{e}=f$. The proof of $[\overline{f}_{e}(x)]=[1]$ in $\uparrow a_{e}/ \uparrow e$ is similar to the given for first case. The rest of the proof follows from the universal property of quotients in $\MTL$.

\end{proof}
\noindent
Let $f: M\rightarrow N$ be an MTL-morphism between finite MTL-algebras and $\mathcal{F}_{f}:=\{f_e\}_{e\in \JI{N}}$ be the family of MTL-morphisms obtained in Lemma \ref{labelingMTLmorphisms}.

\begin{coro}
Let $M$ and $N$ be finite MTL-algebras and $f: M \to N$ an MTL-algebra morphism. Then the pair $(f^{\ast},\mathcal{F}_{f})$ is a morphism between the labeled forests $\etiq{N}$ and $\etiq{M}$.
\end{coro}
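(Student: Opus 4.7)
The plan is to unpack the definition of a morphism in $\fLF$ and check its two requirements separately: the first factor must be a p-morphism between the underlying forests, and the second factor must be a family of injective MTL-algebra morphisms between the corresponding labels.

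For the first factor, Lemma \ref{assignation p morphism} already guarantees that $f^{\ast}\colon \JI{N}\to \JI{M}$ is a p-morphism, so nothing further is needed at this level. For the second factor, Lemma \ref{labelingMTLmorphisms} produces, for every $e\in \JI{N}$, an MTL-morphism
\[
f_{e}\colon \uparrow\! a_{f^{\ast}(e)}/\!\uparrow\! f^{\ast}(e) \longrightarrow\, \uparrow\! a_{e}/\!\uparrow\! e,
\]
and by construction $\mathcal{F}_{f}=\{f_{e}\}_{e\in \JI{N}}$ collects them indexed by $e\in \JI{N}$. Since $\etiq{N}(e)=\,\uparrow\! a_{e}/\!\uparrow\! e$ and $(\etiq{M}\circ f^{\ast})(e)=\,\uparrow\! a_{f^{\ast}(e)}/\!\uparrow\! f^{\ast}(e)$, the domain and codomain of each $f_{e}$ match precisely the types required by the definition of a morphism in $\fLF$.

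Therefore the only genuine thing to verify is the injectivity of each $f_{e}$. Here I would invoke Lemma \ref{labelingfMTL}: applied to $M$ together with $f^{\ast}(e)\in \JI{M}$, it gives that $\uparrow\! a_{f^{\ast}(e)}/\!\uparrow\! f^{\ast}(e)$ is an archimedean MTL-chain, and applied to $N$ with $e\in \JI{N}$, it gives that $\uparrow\! a_{e}/\!\uparrow\! e$ is also an archimedean MTL-chain. Thus $f_{e}$ is an MTL-morphism between two finite archimedean MTL-chains, and Remark \ref{morphisms between archimedean chains} forces it to be injective.

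I do not foresee a real obstacle: the typing of $f^{\ast}$ and of the family $\mathcal{F}_{f}$ has already been arranged in the previous lemmas, and injectivity is a free consequence of the earlier archimedeanity results. The only point worth being careful about is to confirm that Lemma \ref{labelingfMTL} applies to $f^{\ast}(e)$, which is legitimate because $f^{\ast}$ takes values in $\JI{M}$ by Lemma \ref{assignation p morphism}, so that $\uparrow\! a_{f^{\ast}(e)}/\!\uparrow\! f^{\ast}(e)$ falls under the hypotheses of that lemma.
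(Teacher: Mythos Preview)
Your proposal is correct and matches the paper's intended (but unwritten) argument: the paper states the result as an immediate corollary of Lemmas \ref{assignation p morphism}, \ref{labelingfMTL}, and \ref{labelingMTLmorphisms}, and your unpacking---checking the p-morphism condition via Lemma \ref{assignation p morphism}, the typing of each $f_{e}$ via Lemma \ref{labelingMTLmorphisms}, and injectivity via Lemma \ref{labelingfMTL} combined with Remark \ref{morphisms between archimedean chains}---is precisely how one fills in the details.
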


\begin{theo}
	\label{functor from fMTL to root systems}
 The assignments $M\mapsto l_{M}$ and $f\mapsto (f^{\ast},\mathcal{F}_{f})$ define a contravariant functor \[\Ida : \fMTL \rightarrow \fLF.\]
\end{theo}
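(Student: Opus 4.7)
The plan is to verify the two functoriality axioms, since the remaining statements (well-definedness on objects and on morphisms) have already been established by Lemma~\ref{labelingfMTL} and the Corollary immediately preceding the theorem. Contravariance means one must show that $\Ida(\mathrm{id}_M) = \mathrm{id}_{l_M}$ and that, for composable MTL-morphisms $f : M \to N$ and $g : N \to P$, the equality $\Ida(gf) = \Ida(f)\circ \Ida(g)$ holds in $\fLF$, where the composition in $\fLF$ is the one defined just before Lemma~\ref{Filterequivalent}.

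For the identity axiom, I would first observe that $\mathrm{Spec}(\mathrm{id}_M) = \mathrm{id}_{\mathrm{Spec}(M)}$ by the functoriality of $\textbf{Spec}$, whence by Lemma~\ref{assignation p morphism} (applied via $\varphi_M$) one gets $(\mathrm{id}_M)^{\ast} = \mathrm{id}_{\JI{M}}$. Next, for each $e \in \JI{M}$ one has $(\mathrm{id}_M)^{\ast}(e) = e$, so $a_{(\mathrm{id}_M)^{\ast}(e)} = a_e$, and the map $\overline{(\mathrm{id}_M)}_e : \uparrow a_e \to \uparrow a_e$ coincides with the identity (its values on $x > a_e$ are $\mathrm{id}_M(x) = x$, and on $a_e$ it is $a_e$). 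By the uniqueness in the universal property of the quotient used in Lemma~\ref{labelingMTLmorphisms}, the induced $(\mathrm{id}_M)_e$ is the identity on $\uparrow a_e / \uparrow e$. Hence $\Ida(\mathrm{id}_M) = (\mathrm{id}_{\JI{M}}, \{\mathrm{id}_{l_M(e)}\}_{e\in \JI{M}}) = \mathrm{id}_{l_M}$.

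For composition, given $f : M \to N$ and $g : N \to P$, contravariance of $\textbf{Spec}$ together with Lemma~\ref{assignation p morphism} yields $(gf)^{\ast} = f^{\ast} g^{\ast}$ as p-morphisms $\JI{P} \to \JI{M}$. It remains to show that for every $e \in \JI{P}$ the MTL-morphism $(gf)_e$ produced by Lemma~\ref{labelingMTLmorphisms} agrees with the composite $f_{g^{\ast}(e)} \circ g_e$ as prescribed by the composition law in $\fLF$. To this end I would verify this at the level of the pre-quotient maps $\overline{(gf)}_e$ and $\overline{f}_{g^{\ast}(e)} \circ \overline{g}_e$: on elements $x > a_{f^{\ast} g^{\ast}(e)}$ both maps coincide with $g(f(x))$ (using that $\overline{g}_e$ equals $g$ on $x > a_{g^{\ast}(e)}$ and lands in $\uparrow a_e$, and analogously for $\overline{f}_{g^{\ast}(e)}$), while on the bottom element $a_{f^{\ast} g^{\ast}(e)}$ both send it to $a_e$. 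Once the two pre-quotient maps are shown to agree, the uniqueness clause of the universal property of the quotient in $\MTL$ forces $(gf)_e = f_{g^{\ast}(e)} \circ g_e$, which is exactly the $e$-component of $\Ida(f) \circ \Ida(g)$.

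The main obstacle I anticipate is the bookkeeping around the boundary element $a_{f^{\ast} g^{\ast}(e)}$ in the composition check: one must make sure that when $\overline{g}_e$ sends an element to $a_e$ (the bottom of $\uparrow a_e$), the subsequent application of $\overline{f}_{g^{\ast}(e)}$ is consistent with the by-cases definition and actually produces $a_e$, not just any element above it; this uses the monotonicity of $f^{\ast}$ together with the fact that $a_{f^{\ast}(g^{\ast}(e))} \le f^{\ast}(a_{g^{\ast}(e)})$ obtained in Lemma~\ref{labelingMTLmorphisms}. Once that is sorted, both functoriality axioms go through, and the assignments define the asserted contravariant functor $\Ida : \fMTL \to \fLF$.
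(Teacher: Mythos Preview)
Your approach is essentially the same as the paper's: both verify $(gf)^{\ast}=f^{\ast}g^{\ast}$ via the functoriality of $\textbf{Spec}$ and the isomorphisms $\varphi_M$, then check equality of the pre-quotient maps $\overline{(gf)}_e$ and the composite of $\overline{f}_{g^{\ast}(e)}$ with $\overline{g}_e$, and finally invoke the uniqueness clause of the universal property of the quotient to conclude $(gf)_e = g_e\, f_{g^{\ast}(e)}$.

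One notational slip worth fixing: since $\overline{f}_{g^{\ast}(e)}:\uparrow a_{f^{\ast}g^{\ast}(e)}\to \uparrow a_{g^{\ast}(e)}$ and $\overline{g}_e:\uparrow a_{g^{\ast}(e)}\to \uparrow a_e$, the composite is $\overline{g}_e\circ\overline{f}_{g^{\ast}(e)}$ (and likewise $g_e\circ f_{g^{\ast}(e)}$), not the order you wrote; your own computation (``both maps coincide with $g(f(x))$'') confirms this is what you intend, and your ``obstacle'' paragraph should be adjusted accordingly (the boundary issue is that $\overline{f}_{g^{\ast}(e)}$ may land on $a_{g^{\ast}(e)}$, after which $\overline{g}_e$ is applied).
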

\begin{proof}
	
	Let $f:M\rightarrow N$ and $g:N\rightarrow O$ be morphisms in $\fMTL$ and consider the diagram
\begin{displaymath}
\xymatrix{
Spec(O) \ar[r]^-{Spec(g)} & Spec(N) \ar[r]^-{Spec(f)} \ar@<1ex>[d]^-{{\varphi _{N}}^{-1}} & Spec(M)
\\
\JI{O} \ar[u]^-{{\varphi _{O}}} \ar[r]_-{g^{\ast}} & \JI{N} \ar@<1ex>[u]^-{{\varphi _{N}}} \ar[r]_-{f^{\ast}} & \JI{M} \ar[u]^-{{\varphi _{M}}}
}
\end{displaymath}
\noindent
associated to the composition $gf: M\rightarrow O$. Since $\textbf{Spec}$ is contravariant,
\begin{equation}
(gf)^{\ast}=\varphi_{M}^{-1}Spec(gf)\varphi_{O}=(\varphi_{M}^{-1}Spec(f)\varphi_{N})(\varphi_{N}^{-1}Spec(g)\varphi_{O})=f^{\ast}g^{\ast}.\label{eq1}
\end{equation}
	
\noindent
If $id_{M}$ denotes the identity map of the MTL-algebra $M$, a straightforward calculation proves that $(id_{M})^{\ast}=id_{\JI{M}}$. On the other hand, let $e\in \JI{O}$. We will verify that $\overline{(gf)}_{e}=\overline{g}_{e}\overline{f}_{g^{\ast}(e)}$. From (\ref{eq1}), we conclude that $a_{(gf)^{\ast}(e)}=a_{f^{\ast}(g^{\ast}(e))}$. If $x>a_{(gf)^{\ast}(e)}$, then $x>a_{f^{\ast}(g^{\ast}(e))}$. By the monotonicity of $f$ and Lemma \ref{assignation p morphism}, we obtain that $a_{g^{\ast}(e)}\leq f(a_{f^{\ast}(g^{\ast}(e))})\leq f(x)$. Hence $a_{g^{\ast}(e)}\leq f(x)$. In a similar way, by the monotonicity of $g$ and using again Lemma \ref{assignation p morphism}, we get that $a_{e}\leq (gf)(x)$. Therefore, for every $x\in \uparrow a_{(gf)^{\ast}(e)}$ it follows that $\overline{(gf)}_{e}(x)=\overline{g}_{e}\overline{f}_{g^{\ast}(e)}(x)$. Finally, from Lemma \ref{labelingMTLmorphisms}, we obtain that $g_{e}f_{g^{\ast}(e)}=(gf)_{e}$. So, for every $e\in \JI{N}$ the diagram below

\begin{displaymath}
\xymatrix{
\uparrow a_{(gf)^{\ast}(e)} \ar[r]^-{(\overline{f})_{g^{\ast}(e)}} \ar[d] \ar@/^2pc/[rr]^-{\overline{(gf)}_{e}} & \uparrow a_{g^{\ast}(e)} \ar[r]^-{\overline{g}_{e}} \ar[d] & \uparrow a_{e} \ar[d]
\\
\uparrow a_{(gf)^{\ast}(e)}/\uparrow (gf)^{\ast}(e) \ar[r]_{f_{g^{\ast}(e)}} \ar@/_2pc/[rr]_-{(gf)_{e}} & \uparrow a_{g^{\ast}(e)}/ \uparrow g^{\ast}(e) \ar[r]_-{g_{e}} & \uparrow a_{e}/ \uparrow e
}
\end{displaymath}
\noindent
commutes. Therefore $(g^{\ast}f^{\ast}, \mathcal{F}) =((gf)^{\ast},\mathcal{F}_{gf})$, where $\mathcal{F}_{gf}=\{(gf)_{e}\}_{e\in \JI{O}}$ and $\mathcal{F}=\{g_{e}f_{g^{\ast}(e)}\}_{e\in \JI{O}}$. Hence $\Ida(gf)=\Ida(f)\Ida(g)$. From the definition of $\Ida$ it easily follows that $\Ida(id_{M})=id_{\Ida(M)}$.
\end{proof}

\section{Forest Product of MTL-algebras}
\label{section Forest Product of MTL-algebras}

In this section we introduce the notion of forest product. It is simply a poset product as defined in \cite{BM2011} when restricted to posets which are forests. For the sake of completeness, we give explicitly the necessary definitions.

\begin{defi}\label{Poset Product}
Let $\textbf{F}=(F,\leq)$ be a forest and let $\{\textbf{M}_{i}\}_{i\in \textbf{F}}$ a collection of MTL-chains such that, up to isomorphism, all they share the same neutral element $1$. If $\left( \bigcup_{i\in \textbf{F}}\textbf{M}_{i}\right)^{F}$ denotes the set of functions $h:F\rightarrow \bigcup_{i\in \textbf{F}} \textbf{M}_{i}$ such that $h(i)\in \textbf{M}_{i}$ for all $i\in \textbf{F}$, the \emph{forest product} $\bigotimes_{i\in \textbf{F}}\textbf{M}_{i}$ is the algebra $\textbf{M}$ defined as follows:
\begin{itemize}
\item[(1)] The elements of $\textbf{M}$ are the $h\in \left(\bigcup_{i\in \textbf{F}}\textbf{M}_{i}\right)^{F}$ such that, for all $i\in \textbf{F}$ if $h(i)\neq 0_{i}$ then for all $j<i$, $h(j)=1$.
\item[(2)] The monoid operation and the lattice operations are defined pointwise.
\item[(3)] The residual is defined as follows:
\begin{displaymath}
(h\rightarrow g)(i)=\left\{ \begin{array}{lc}
             h(i)\rightarrow_{i} g(i), &  \text{if for all}\; j<i,\; h(j)\leq_{j} g(j)  \\
             \\ 0_{i}  & \text{otherwise} \\
             \end{array}
   \right.
\end{displaymath}
 where de subindex $i$ denotes the application of operations and of order in $\textbf{M}_{i}$.
\end{itemize}
\end{defi}

The following result is a slight modification of Theorem 3.5.3 in \cite{BM2011}.

\begin{lem}\label{forest product is a MTL-algebra}
The forest product of MTL-chains is an MTL-algebra.
\end{lem}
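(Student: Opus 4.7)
The plan is to verify directly that $\mathbf{M} = \bigotimes_{i\in\mathbf{F}} \mathbf{M}_i$ satisfies each axiom of an MTL-algebra, exploiting the fact that the monoid, lattice and integrality axioms are inherited componentwise from the fibers $\mathbf{M}_i$, while only residuation and prelinearity require global reasoning.

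First I would show that $\cdot$, $\wedge$, $\vee$, $\rightarrow$ preserve condition $(1)$ of Definition \ref{Poset Product}. For $\cdot$, $\wedge$, $\vee$ this is immediate: if $(h \star g)(i) \neq 0_i$ then at least one of $h(i), g(i)$ is nonzero, so condition $(1)$ applied to that factor forces $(h \star g)(j) = 1_j$ for every $j < i$. For $\rightarrow$, if $(h \rightarrow g)(i) \neq 0_i$ then we must be in the first branch of the definition, i.e., $h(j) \leq g(j)$ for all $j < i$; the same branch applies at each $j' < i$ (otherwise a bad $j < j' < i$ would contradict the branch at $i$), so $(h \rightarrow g)(j') = h(j') \rightarrow g(j') = 1_{j'}$. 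The remaining monoid and lattice axioms reduce to their fiberwise versions, and the constant-$0$ function belongs to $M$ and is the bottom element.

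For residuation $h \cdot g \leq k \iff h \leq g \rightarrow k$, I would argue coordinate by coordinate. Fix $i$; if $g(j) \leq k(j)$ for all $j < i$, both sides at coordinate $i$ reduce to residuation in the fiber $M_i$. Otherwise, pick $j_0 < i$ with $g(j_0) > k(j_0)$, so $(g \rightarrow k)(i) = 0_i$; in the forward direction $h(j_0) g(j_0) \leq k(j_0) < g(j_0)$ forces $h(j_0) \neq 1_{j_0}$, and then $(1)$ gives $h(i) = 0_i = (g \rightarrow k)(i)$; in the backward direction $h(i) \leq (g \rightarrow k)(i) = 0_i$ gives $h(i) = 0_i$ directly, so $h(i) g(i) = 0_i \leq k(i)$.

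The step I expect to be the main obstacle is prelinearity, which is where the forest hypothesis is essentially used. Fix $i \in F$; the key is a dichotomy: either $h(j) \leq g(j)$ for all $j < i$, or $g(j) \leq h(j)$ for all $j < i$. Indeed, if one had $j_0, j_1 < i$ with $h(j_0) > g(j_0)$ and $g(j_1) > h(j_1)$, then $j_0, j_1$ would be comparable in the totally ordered set $\downarrow i$ (the forest hypothesis); assuming $j_0 \leq j_1$, the inequality $g(j_1) > h(j_1) \geq 0_{j_1}$ and condition $(1)$ applied to $g$ would give $g(j_0) = 1_{j_0}$ when $j_0 < j_1$, contradicting $h(j_0) > g(j_0)$, while $j_0 = j_1$ yields an immediate contradiction. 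Granting this dichotomy, if both sides hold, both residuals at $i$ reduce to fiber residuals and prelinearity of the chain $M_i$ applies; otherwise, say only $h(j) \leq g(j)$ holds for all $j < i$ with $h(j_0) < g(j_0)$ strict for some $j_0$, then $(g \rightarrow h)(i) = 0_i$, and applying $(1)$ to $h$ yields $h(i) = 0_i$, so $(h \rightarrow g)(i) = 0_i \rightarrow g(i) = 1_i$ and the join is $1_i$.
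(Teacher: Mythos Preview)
Your direct verification is correct and self-contained. Note, however, that the paper itself does not prove this lemma: it is stated immediately after Definition~\ref{Poset Product} with the remark that it is ``a slight modification of Theorem~3.5.3 in \cite{BM2011}'', and no argument is given. So your proposal supplies precisely what the paper defers to the literature; the approach you take---pointwise verification of the monoid/lattice axioms, then a case analysis for residuation and for prelinearity using the forest hypothesis that $\downarrow i$ is a chain---is the standard one and is presumably what the cited reference does.

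One small imprecision worth tightening: in the closure argument for $\cdot$ and $\wedge$, the claim ``at least one of $h(i),g(i)$ is nonzero, so condition~(1) applied to that factor forces $(h\star g)(j)=1_j$'' only works as stated for $\star=\vee$. For $\star\in\{\cdot,\wedge\}$, if just one factor equals $1$ at $j<i$ you only get $(h\star g)(j)=g(j)$, not $1$. The fix is immediate: for these two operations, $(h\star g)(i)\neq 0_i$ in fact forces \emph{both} $h(i)\neq 0_i$ and $g(i)\neq 0_i$, so condition~(1) applies to both factors and $(h\star g)(j)=1\star 1=1$. The residuation and prelinearity arguments are fine as written; in particular, your dichotomy for prelinearity correctly isolates the one place where the forest hypothesis (total order of $\downarrow i$) is essential.
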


\noindent
In the following if we refer to a collection $\{\textbf{M}_{i}\}_{i\in \textbf{F}}$ of MTL-chains indexed by a forest $\textbf{F}$ we always will assume that it satisfies the conditions of Definition \ref{Poset Product}.

\begin{lem}\label{Equivalent forms of forest product}
Let $\textbf{F}$ be a forest and $\{\textbf{M}_{i}\}_{i\in \textbf{F}}$ a collection of MTL-chains. There are equivalent:
\begin{itemize}
\item[1.] $h\in \bigotimes_{i\in \textbf{F}}\textbf{M}_{i}$,
\item[2.] For every $i<j$ in $\textbf{F}$, $h(j)=0_{j}$ or $h(i)=1$,
\item[3.] For all $i\in \textbf{F}$ if $h(i)\neq 1$ then for all $i<j$, $h(j)=0_{j}$,
\item[4.] $\bigcup_{i\in \textbf{F}} h^{-1}(0_{j})$ is an upset of $\textbf{F}$, $h^{-1}(1)$ is a downset of $\textbf{F}$ and \[C_{h}=\{i\in \textbf{F} \mid h(i)\notin \{0_{i},1\}\},\] is a (possibly empty) antichain of $\textbf{F}$.
\end{itemize}
\end{lem}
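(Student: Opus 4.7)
The plan is to establish the cyclic implications $1 \Leftrightarrow 2 \Leftrightarrow 3$ by direct logical manipulation of the defining condition, and then separately show $2 \Leftrightarrow 4$ by unpacking what each of the three conditions in 4 says pointwise. All of the work is essentially propositional bookkeeping applied to the defining clause of Definition \ref{Poset Product}; the edge case $M_i = \mathbf{1}$ (where $0_i = 1$) causes no trouble since then both $h(i) = 0_i$ and $h(i) = 1$ hold simultaneously. I assume throughout that the nontrivial $M_i$ satisfy $0_i \neq 1$.

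For $1 \Leftrightarrow 2$, the condition in Definition \ref{Poset Product}(1) is the universal statement ``for all $i \in F$, $h(i) \neq 0_i$ implies $h(j) = 1$ for all $j < i$'', which is the same as ``for all pairs $j < i$ in $F$, either $h(i) = 0_i$ or $h(j) = 1$''; renaming the bound variables gives exactly statement 2. The equivalence $2 \Leftrightarrow 3$ is the contrapositive: the disjunction ``$h(j) = 0_j$ or $h(i) = 1$'' for every $i < j$ is logically equivalent to ``$h(i) \neq 1$ implies $h(j) = 0_j$'' for every $i < j$.

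For $2 \Rightarrow 4$, assume 2. If $h(i) = 0_i$ and $i \leq j$, I apply 2 to the pair $i < j$ (the case $i = j$ is trivial) to get $h(j) = 0_j$ or $h(i) = 1$; the latter would force $0_i = 1$, which we excluded, so $h(j) = 0_j$ and the set $\{i : h(i) = 0_i\}$ is an upset. Symmetrically, if $h(j) = 1$ and $i \leq j$, statement 2 applied to $i < j$ forces $h(j) = 0_j$ or $h(i) = 1$, and the first option is ruled out by $0_j \neq 1$, so $h^{-1}(1)$ is a downset. Finally, if $i, j \in C_h$ with $i < j$, then $h(i) \notin \{0_i, 1\}$ and $h(j) \notin \{0_j, 1\}$ directly contradicts 2, so $C_h$ is an antichain.

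For $4 \Rightarrow 2$, I take a pair $i < j$ and split into cases according to the status of $h(j)$. If $h(j) = 0_j$ we are done; if $h(j) = 1$ then by the downset condition $h(i) = 1$; otherwise $j \in C_h$, and then if $h(i) = 0_i$ the upset condition forces $h(j) = 0_j$ (contradicting $h(j) \notin \{0_j, 1\}$), while if $h(i) \notin \{0_i, 1\}$ we would have $i \in C_h$ and $i < j$, contradicting the antichain condition. The only remaining possibility is $h(i) = 1$. This exhausts the cases and proves 2. I do not expect any genuine obstacle here, since the argument is a mechanical case analysis; the only point requiring a little care is to keep the degenerate possibility $0_k = 1$ from propagating, which is handled by the assumption that all nontrivial factors distinguish $0$ from $1$.
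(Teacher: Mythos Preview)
Your proof is correct and follows essentially the same approach as the paper: the paper treats $1\Rightarrow 2\Rightarrow 3$ as immediate from the definition, proves $3\Rightarrow 4$ by the same three checks you give for $2\Rightarrow 4$, and closes the cycle with $4\Rightarrow 1$ via the same case analysis you use for $4\Rightarrow 2$. The only difference is cosmetic---you organize the equivalences as $1\Leftrightarrow 2\Leftrightarrow 3$ together with $2\Leftrightarrow 4$, while the paper runs the single cycle $1\Rightarrow 2\Rightarrow 3\Rightarrow 4\Rightarrow 1$---and your treatment of the $4\Rightarrow 2$ direction is in fact more explicit than the paper's, which jumps directly to ``$i,j\in C_h$'' without spelling out the subcases $h(i)=0_i$ and $h(j)=1$.
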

\begin{proof}
Since the implications $(1)\Rightarrow (2) \Rightarrow (3)$ follow straight from definition, we only prove the remaining implications. Let us start proving that $(3)$ implies $(4)$: To prove that $h^{-1}(1)$ is a downset of $\textbf{F}$ we proceed by contradiction. Suppose $i<j$ with $h(j)=1$ but $h(i)\neq 1$. Thus, $h(j)=0_{i}$, by assumption, which is absurd. To prove that $\bigcup_{i\in I} h^{-1}(0_{j})$ is an upset of $\textbf{F}$ let us suppose that $i<j$ with $h(i)=0_{i}$, thus, since $h(i)\neq 1$, and $(3)$, we get that $h(j)=0_{j}$. If $C_{h}$ is not an antichain, there exist $i,j\in C_{h}$ comparable. Without loss of generality, we can assume $i<j$, $h(i)\neq 1$ and $h(j)\neq 0_{j}$, then because of $(3)$, we obtain that $h(j)= 0_{j}$, which is absurd. Finally, to prove that $(4)$ implies $(1)$, let $h\in \left( \bigcup_{i\in \textbf{F}}\textbf{M}_{i}\right)^{F}$ and suppose that $i<j$ with $h(i)\neq 1$. If $h(j)\neq 0_{j}$, thus $i,j\in C_{h}$, which is absurd, since $C_{h}$ is by assumption an antichain.
\end{proof}

\begin{rem}\label{elements of chain products}
Let $\textbf{F}$ be a chain and $\{\textbf{M}_{i}\}_{i\in \textbf{F}}$ a collection of MTL-chains. Let us consider $h\in \bigotimes_{i\in \textbf{F}}\textbf{M}_{i}$ and $j\in \textbf{F}$. Note that there are only two possible cases for $h(j)$, namely $h(j)\neq 0_{j}$ or $h(j)\neq 1$. If $h(j)\neq 1$, from $(4)$ of Lemma \ref{Equivalent forms of forest product}, it follows that for every $j<i$, $h(i)=0_{i}$ and due to $(3)$ of the same Lemma, $h(k)=1$ for every $k<j$. If $h(j)\neq 0_{j}$, from Definition \ref{Poset Product}, we get that $h(k)=0_{k}$ for every $j<k$ and by $(4)$ of Lemma \ref{Equivalent forms of forest product}, we have that $h(i)=1$ for every $i<j$.
\end{rem}
\noindent
A commutative integral residuated lattice $A$ is called \emph{really local} (Definition 1.2.27 of \cite{Z2016}) if it is not trivial $(0\neq 1)$ and for every $x,y\in A$,
\[ x\vee y=1 \; \Rightarrow \; x=1\; \text{or}\; y=1. \]

\begin{lem}\label{really local and prelinear are chains}
Let $A$ be a non-trivial commutative integral residuated lattice. If the canonical order of A is total then A is really local. If A is also prelinear then the converse holds.
\end{lem}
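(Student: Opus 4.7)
The plan is to verify the two implications separately, both by very short arguments that use only the definitions plus, in the second direction, prelinearity.

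For the forward direction, I would start by taking $x, y \in A$ with $x \vee y = 1$ and exploit totality of the order: either $x \leq y$ or $y \leq x$. In the first case $x \vee y = y$, so $y = 1$; in the second case symmetrically $x = 1$. Since $A$ is non-trivial by hypothesis, $0 \neq 1$, so the definition of really local is met.

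For the converse I would assume $A$ is prelinear and really local, and aim to show that for arbitrary $x, y \in A$ one has $x \leq y$ or $y \leq x$. Prelinearity gives $(x \to y) \vee (y \to x) = 1$, and really local then forces $x \to y = 1$ or $y \to x = 1$. By the general properties of the residual in an integral residuated lattice, $x \to y = 1$ is equivalent to $x \leq y$, and similarly for the other case; so the order is total.

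I do not anticipate a serious obstacle: the only subtlety is that in the converse one must remember that in an integral residuated lattice the identity $a \leq b \iff a \to b = 1$ is standard (it follows from residuation together with $a \cdot 1 = a$), and this is exactly what converts the two alternatives delivered by really local into the two alternatives needed for totality. No further structure (bottom element, finiteness, idempotency) is required.
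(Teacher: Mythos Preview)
Your argument is correct: both directions are handled cleanly, and the only nontrivial step---that $a\to b=1$ is equivalent to $a\leq b$ in an integral residuated lattice---is standard and you flag it explicitly.

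The paper, however, does not actually prove this lemma: it simply records it as ``a direct consequence of Lemma 13.7 of \cite{CMZ2016}''. So your proposal is not just equivalent to the paper's proof but strictly more informative, supplying a self-contained elementary verification where the paper defers to an external reference. There is nothing to correct.
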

\begin{proof}
A direct consequence of Lemma 13.7 of \cite{CMZ2016}.
\end{proof}

\begin{lem}\label{chain products are MTL-chains}
Let $\textbf{F}$ be a forest and $\{\textbf{M}_{i}\}_{i\in \textbf{F}}$ a collection of MTL-chains. Then $\textbf{F}$ is a totally ordered set if and only if $\bigotimes_{i\in \textbf{F}}\textbf{M}_{i}$ is an MTL-chain.
\end{lem}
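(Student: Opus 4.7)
The plan is to reduce the biconditional via Lemma \ref{really local and prelinear are chains}. Since the forest product is already an MTL-algebra, hence prelinear, by Lemma \ref{forest product is a MTL-algebra}, being an MTL-chain is equivalent to being really local; that is, to the implication $h \vee g = 1 \Rightarrow h = 1 \text{ or } g = 1$, where $1$ denotes the constant function. I will prove both directions through this characterization.

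For the forward direction, I assume $\textbf{F}$ is totally ordered and suppose $h \vee g = 1$ in $\bigotimes_{i \in \textbf{F}} \textbf{M}_{i}$. Since the lattice operations are pointwise and each $\textbf{M}_{i}$ is a chain, $h(i) \vee g(i) = 1$ forces $h(i) = 1$ or $g(i) = 1$ for every $i \in \textbf{F}$. Arguing by contradiction, assume neither $h$ nor $g$ is the constant $1$; pick $i, j \in \textbf{F}$ with $h(i) \neq 1$ and $g(j) \neq 1$, so necessarily $g(i) = 1$ and $h(j) = 1$. Applying Remark \ref{elements of chain products} (valid because $\textbf{F}$ is a chain), from $h(i) \neq 1$ I conclude $h(k) = 0_{k}$ for every $k > i$, and similarly $g(k) = 0_{k}$ for every $k > j$. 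Totality of $\textbf{F}$ splits into three cases: if $i < j$ then $h(j) = 0_{j}$, contradicting $h(j) = 1$; the case $j < i$ is symmetric; and if $i = j$ then $h(i) \vee g(i) \neq 1$, again a contradiction.

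For the reverse direction I argue by contrapositive: if $\textbf{F}$ is not totally ordered there exist incomparable $i, j \in \textbf{F}$, and I define
\[
h(k) = \begin{cases} 0_{k} & \text{if } k \geq i, \\ 1 & \text{otherwise,} \end{cases} \qquad g(k) = \begin{cases} 0_{k} & \text{if } k \geq j, \\ 1 & \text{otherwise.} \end{cases}
\]
I verify $h \in \bigotimes_{k \in \textbf{F}} \textbf{M}_{k}$ via condition $(3)$ of Lemma \ref{Equivalent forms of forest product}: if $h(k) \neq 1$ then $k \geq i$, and any $l > k$ then satisfies $l \geq i$ by transitivity, whence $h(l) = 0_{l}$; the check for $g$ is identical. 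Since $i, j$ are incomparable, $i \not\geq j$ and $j \not\geq i$, yielding $h(i) = 0_{i} \neq 1 = g(i)$ and $h(j) = 1 \neq 0_{j} = g(j)$, so $h$ and $g$ are incomparable in the pointwise order and the forest product is not a chain.

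The only delicate technical point is checking that the constructed pair $h, g$ genuinely lies in the forest product; this is exactly where transitivity of the forest order is used, to guarantee that the principal upsets $\uparrow i$ and $\uparrow j$ are themselves upward closed. Everything else is routine case analysis driven by Remark \ref{elements of chain products} together with the fact that the lattice operations on the forest product are pointwise.
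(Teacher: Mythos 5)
Your proof is correct and follows essentially the same route as the paper: the forward direction via Lemma \ref{really local and prelinear are chains} and Remark \ref{elements of chain products}, and the converse via the same pair of witness functions supported on the principal upsets of two incomparable points. If anything, your forward direction is slightly more careful than the paper's (you handle the two indices $i,j$ separately instead of asserting a common index where both functions differ from $1$), and in the converse you exhibit incomparability of $h$ and $g$ directly rather than contradicting really-localness, but these are cosmetic refinements rather than a different argument.
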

\begin{proof}
Suppose $\textbf{F}$ is a totally ordered set and let $g,h\in \bigotimes_{i\in \textbf{F}}\textbf{M}_{i}$ be such that $(g\vee h)=1$. Since the lattice operations in $\bigotimes_{i\in \textbf{F}}\textbf{M}_{i}$ are calculated pointwise, for every $i\in \textbf{F}$, $g(i)\vee h(i)=1$. If $g,h\neq 1$, there exists some $j\in \textbf{F}$ such that $g(j),h(j)\neq 1$. From Remark \ref{elements of chain products} it follows that $g(k)=h(k)=0_{k}$, for every $j<k$ so we get that $(g\vee h)(k)=g(k)\vee h(k)=0_{k}$, which contradicts our assumption. Hence, since every MTL-algebra is prelinear, from Lemma \ref{really local and prelinear are chains} we get that $\bigotimes_{i\in \textbf{F}}\textbf{M}_{i}$ is a MTL-chain. On the other hand, let us assume that $\bigotimes_{i\in \textbf{F}}\textbf{M}_{i}$ is a MTL-chain. If $\textbf{F}$ is not a totally ordered set, thus there exist two different elements $n$ and $m$ in $F$ which are not comparable. Let us consider $g,h\in \bigotimes_{i\in \textbf{F}}\textbf{M}_{i}$, defined as
\begin{center}
\begin{tabular}{ccc}
$g(i)=\left\{ \begin{array}{lc}
             0_{n}, &  \text{if}\; i\geq n  \\
             \\ 1,  &  \text{otherwise} \\
             \end{array}
   \right.$ &  & $h(i)=\left\{ \begin{array}{lc}
             0_{m}, &  \text{if}\; i\geq m  \\
             \\ 1,  & \text{otherwise} \\
             \end{array}
   \right.$
\end{tabular}
\end{center}

\noindent
Observe that $g\vee h=1$ but $g,h\neq 1$, which is a contradiction since, by Lemma \ref{really local and prelinear are chains}, $\bigotimes_{i\in \textbf{F}}\textbf{M}_{i}$ is really local.

\end{proof}

\noindent
Let $\textbf{F}$ be a forest and $\{\textbf{M}_{i}\}_{i\in \textbf{F}}$ a collection of MTL-chains. We write $\dec{\textbf{F}}$ for the collection of downsets of $\textbf{F}$. Let $S$ be a proper downset of $\textbf{F}$ and consider
\[\filter{S}:= \{h\in \bigotimes_{i\in \textbf{F}}\textbf{M}_{i} \mid h|_{S}=1 \} \]

\noindent
Observe that $\filter{S}$ is a proper filter of $\bigotimes_{i\in \textbf{F}}\textbf{M}_{i}$. Since $S^{c}$ is itself a forest, due to Lemma \ref{forest product is a MTL-algebra}, $\bigotimes_{i\in \textbf{S}^{c}}\textbf{M}_{i}$ is an MTL-algebra. Using the fact that every filter of a MTL-algebra is a semihoop, we obtain the following result.

\begin{lem}\label{Downsets give filters}
Let $\textbf{F}$ be a forest and $\{\textbf{M}_{i}\}_{i\in \textbf{F}}$ a collection of MTL-chains and $S\in \dec{\textbf{F}}$. Then $\filter{S}$ and $\bigotimes_{i\in \textbf{S}^{c}}\textbf{M}_{i}$ are isomorphic semihoops.
\end{lem}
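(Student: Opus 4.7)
The plan is to exhibit a natural restriction/extension bijection and then verify it preserves the semihoop operations pointwise, with the only subtle check being the residual.

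First, I would observe that $S^{c}$ is itself a forest when viewed as a subposet of $\textbf{F}$: since $S$ is a downset, $S^{c}$ is an upset, and for any $a \in S^{c}$, $\downarrow_{S^{c}} a = \downarrow_{\textbf{F}} a \cap S^{c}$ is a subset of the totally ordered set $\downarrow_{\textbf{F}} a$, hence totally ordered. So $\bigotimes_{i \in \textbf{S}^{c}} \textbf{M}_{i}$ is defined and, by Lemma \ref{forest product is a MTL-algebra}, is an MTL-algebra, hence in particular a semihoop. The set $\filter{S}$ is a filter of $\bigotimes_{i \in \textbf{F}} \textbf{M}_{i}$ and so, by Remark \ref{filtersareMTL} (applied in the obvious way over an arbitrary forest), it inherits the structure of a semihoop.

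Next, I would define the restriction map $\rho : \filter{S} \to \bigotimes_{i \in \textbf{S}^{c}} \textbf{M}_{i}$ by $\rho(h) = h|_{S^{c}}$ and its candidate inverse $\eta : \bigotimes_{i \in \textbf{S}^{c}} \textbf{M}_{i} \to \filter{S}$ by
\[
\eta(g)(i) = \begin{cases} g(i), & i \in S^{c}, \\ 1, & i \in S. \end{cases}
\]
To check $\rho$ lands in $\bigotimes_{i \in \textbf{S}^{c}} \textbf{M}_{i}$: if $h \in \filter{S}$ and $h|_{S^{c}}(i) \neq 0_{i}$, then $h(i) \neq 0_{i}$, so $h(j) = 1$ for every $j <_{\textbf{F}} i$ and, in particular, for every $j <_{S^{c}} i$. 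To check $\eta$ lands in $\filter{S}$, suppose $\eta(g)(i) \neq 0_{i}$ and $j <_{\textbf{F}} i$. If $j \in S$ then $\eta(g)(j) = 1$ by definition; if $j \in S^{c}$, then since $S^{c}$ is an upset $i \in S^{c}$ too, so $g(i) \neq 0_{i}$ forces $g(j) = 1$ in the forest product on $S^{c}$, hence $\eta(g)(j) = 1$. The equalities $\eta \circ \rho = \mathrm{id}_{\filter{S}}$ and $\rho \circ \eta = \mathrm{id}$ are immediate from $h|_{S} = 1$ and the definitions.

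Finally, I would verify that $\rho$ is a semihoop morphism. Since the monoid operation and the lattice operations are defined pointwise in both algebras, they are obviously preserved. The subtle point, and the main obstacle, is the residual. For $h_{1}, h_{2} \in \filter{S}$ and $i \in S^{c}$, the auxiliary condition defining $(h_{1} \to h_{2})(i)$ in $\bigotimes_{i \in \textbf{F}} \textbf{M}_{i}$ reads ``$h_{1}(j) \leq_{j} h_{2}(j)$ for all $j <_{\textbf{F}} i$''; but for $j \in S$ we have $h_{1}(j) = h_{2}(j) = 1$, so this reduces to ``$h_{1}(j) \leq_{j} h_{2}(j)$ for all $j \in S^{c}$ with $j <_{\textbf{F}} i$'', which is exactly the condition defining the residual at $i$ in $\bigotimes_{i \in \textbf{S}^{c}} \textbf{M}_{i}$. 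In either case both residuals agree at $i$, so $\rho(h_{1} \to h_{2}) = \rho(h_{1}) \to \rho(h_{2})$. Preservation of $1$ is clear. This finishes the proof, and the map $\rho$ is the desired semihoop isomorphism.
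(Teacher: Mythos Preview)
Your proof is correct and follows essentially the same approach as the paper's: both set up the mutually inverse restriction/extension maps between $\filter{S}$ and $\bigotimes_{i\in S^{c}}\textbf{M}_{i}$, observe that the pointwise operations are preserved trivially, and isolate the residual as the only nontrivial check. The paper takes the extension map $\varphi$ as the primary morphism (so it must also verify the residual at indices $i\in S$) and works through several cases, whereas you take the restriction $\rho$ and reduce the predecessor condition directly; the arguments are dual and equally valid.
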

\begin{proof}
Let $g\in \bigotimes_{i\in \textbf{S}^{c}}\textbf{M}_{i}$. Define $\varphi: \bigotimes_{i\in \textbf{S}^{c}}\textbf{M}_{i} \rightarrow \filter{S}$ as
\begin{displaymath}
\varphi(g)(i)=\left\{ \begin{array}{lc}
             g(i), &  \text{if}\; i\notin S  \\
             \\ 1,  & \text{if}\; i\in S \\
             \end{array}
   \right.
\end{displaymath}
For this part of the proof we will write $h$ to denote $\varphi(g)$. First, we prove that $h$ is well defined. Let us take $i<j$ in $F$ and suppose that $h(i)\neq 1$. By construction of $h$, we get that $i\notin S$, so $h(i)=g(i)$. If $h(j)\neq 0_{j}$, then $h(j)=1$ or $0_{j}<h(j)<1$. In the first case we obtain that $j\in S$ and since $i<j$ and $S\in \dec{\textbf{F}}$, $i\in S$, which is absurd. In the second case, since $S^{c}$ is an upset of $\textbf{F}$, from $i\notin S$ and $i<j$ it follows that $j\notin S$. Hence, $h(j)=g(j)\neq 1$. Therefore, there are $i,j\in C_{h}$ comparable, which by $(4)$ of Lemma \ref{Equivalent forms of forest product} is absurd. Consequently, $h(j)=0_{j}$ and thus, by $(3)$ of Lemma \ref{Equivalent forms of forest product}, we get that $h\in  \bigotimes_{i\in \textbf{F}}\textbf{M}_{i}$. By construction, it is clear that $h\in \filter{S}$. In order to verify that $\varphi$ is surjective, let $f\in \filter{S}$ and consider $f|_{S^{c}}$. Since $S^{c}$ is an upset and $f\in \bigotimes_{i\in \textbf{F}}\textbf{M}_{i}$, it is clear that $\varphi(f|_{S^{c}})=f$. The injectivity of $\varphi$ is immediate.
\\

\noindent
Since the monoid and lattice operations in $\filter{S}$ and $\bigotimes_{i\in \textbf{S}^{c}}\textbf{M}_{i}$ are defined pointwise it is clear that $\varphi$ preserve such operations. We prove that $\varphi$ preserve the residual. To do so, let $s,t\in \bigotimes_{i\in \textbf{S}^{c}}\textbf{M}_{i}$. Then,

\begin{equation}
(\varphi(s)\rightarrow \varphi(t))(i)= \left\{ \begin{array}{lc}
             \varphi(s)(i)\rightarrow_{i} \varphi(t)(i), &  \text{if for all}\; j<i, \; \varphi(s)(j)\leq_{j} \varphi(t)(j)  \\
             \\ 0_{i},  & \text{otherwise}\\
             \end{array} \label{aux1}
   \right.
\end{equation}
and
\begin{equation}
\varphi (s\rightarrow t)(i)= \left\{ \begin{array}{lc}
             (s\rightarrow t)(i), &  \text{if}\; i\notin S  \\
             \\ 1,  & \text{if}\; i\in S \\
             \end{array}
   \right. \label{aux2}
\end{equation}
\noindent
If $i\in S$ then $\varphi(s)=\varphi(t)=1$ so $\varphi(s)(i)\rightarrow_{i} \varphi(t)(i)=1$. Since $S\in \dec{\textbf{F}}$, if $j<i$, $j\in S$. Therefore $\varphi(s)(j)=\varphi(t)(j)=1$. Hence, for every $i\in S$, $\varphi (s\rightarrow t)(i)=(\varphi(s)\rightarrow \varphi(t))(i)$. On the other hand, if $i\notin S$ and $(\varphi(s)\rightarrow \varphi(t))(i)\neq 1$ then $(\varphi(s)\rightarrow \varphi(t))(i)=0_{i}$ or $0_{i}<(\varphi(s)\rightarrow \varphi(t))(i)<1$. In the first case, from the equation (\ref{aux1}) it follows that there exists $j\in F$ with $j<i$ such that $\varphi(s)(j)\nleq_{j} \varphi(t)(j)$. If $j\in S$, $\varphi(s)(j)=\varphi(t)(j)=1$. Therefore $j\notin S$. Hence, since $\varphi(s)(j)=s(j)$ and $\varphi(t)(j)=t(j)$ we get that there exists $j\notin S$ with $j<i$ such that $s(j)\nleq_{j} t(j)$. Then $(s\rightarrow t)(i)=0_{i}=(\varphi(s)\rightarrow \varphi(t))(i)$. In the second case, from the equation (\ref{aux2}) we get that $(\varphi(s)\rightarrow \varphi(t))(i)=\varphi(s)(i) \rightarrow_{i} \varphi(t)(i)$ so, by the definition of $\varphi$, we obtain that $\varphi(s)(i)\rightarrow \varphi(t)(i)=(\varphi(s)\rightarrow \varphi(t))(i)$. Finally, in the case $i\notin S$ and $(\varphi(s)\rightarrow \varphi(t))(i)=1$, $\varphi(s)(i)\leq_{i} \varphi(t)(i)$ and in consequence $\varphi(s)(i) \rightarrow_{i} \varphi(t)(i)=1$. Thus, for every $j<i$, $\varphi(s)(j)\leq_{j} \varphi(t)(j)$. In particular, if $j\notin S$ and $j<i$, $s(j)\leq_{j} t(j)$. Therefore $\varphi(s)(i)\rightarrow \varphi(t)(i)=(s\rightarrow t)(i)$. Hence, for every $i\notin S$, $(\varphi(s)\rightarrow \varphi(t))(i)=\varphi(s)(i) \rightarrow_{i} \varphi(t)(i)$. This concludes the proof.

\end{proof}

\begin{coro}\label{Description of particular filters}
Let $\textbf{F}$ be a forest, $S,T\in \dec{\textbf{F}}$ such that $S\subseteq T$ and $\{\textbf{M}_{i}\}_{i\in \textbf{F}}$ a collection of MTL-chains. Take
$ \Filter{S}{T}:=\{h\in \bigotimes_{i\in \textbf{T}}\textbf{M}_{i}\mid h|_{S}=1 \}$.
Then, $\Filter{S}{T}$ and $\bigotimes_{i\in \textbf{S}^{c}\cap T}\textbf{M}_{i}$ are isomorphic semihoops.
\end{coro}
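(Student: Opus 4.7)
The plan is to reduce this Corollary directly to Lemma \ref{Downsets give filters} by replacing the ambient forest $\textbf{F}$ with $\textbf{T}$. For this to work I first need to verify the three small facts that make the substitution legal.

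First, I would observe that $\textbf{T}$, viewed with the order inherited from $\textbf{F}$, is itself a forest: for any $a \in T$ the set $\downarrow a$ computed in $T$ coincides with $\downarrow a$ computed in $F$, since $T$ is a downset, and the latter is totally ordered by the forest condition on $\textbf{F}$. In particular, $\bigotimes_{i \in \textbf{T}} \textbf{M}_i$ is a well-defined MTL-algebra by Lemma \ref{forest product is a MTL-algebra}. Next, I would check that $S$ is a downset of $\textbf{T}$: if $i \in S$ and $j \leq i$ with $j \in T$, then $j \in S$ because $S$ is a downset of $\textbf{F}$ and $S \subseteq T$. Finally, the complement of $S$ inside $T$ equals $S^{c} \cap T$, where $S^{c}$ is taken in $F$.

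With these observations in hand, I would apply Lemma \ref{Downsets give filters} verbatim, but with the ambient forest being $\textbf{T}$ and the distinguished proper downset being $S$. The conclusion of that lemma, transcribed to this setting, states that
\[
\{ h \in \textstyle\bigotimes_{i \in \textbf{T}} \textbf{M}_{i} \mid h|_{S} = 1 \} \;\;\cong\;\; \textstyle\bigotimes_{i \in T \setminus S} \textbf{M}_{i}
\]
as semihoops. The left-hand side is by definition $\Filter{S}{T}$, and the right-hand side is $\bigotimes_{i \in S^{c} \cap T} \textbf{M}_{i}$, which gives exactly the isomorphism claimed.

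There is no real obstacle here; the only thing to be careful about is that the isomorphism $\varphi$ constructed in the proof of Lemma \ref{Downsets give filters} was described pointwise using the ``extend by $1$'' prescription on $S$, so the proof carries over without modification once one notes that membership in the forest product $\bigotimes_{i \in \textbf{T}} \textbf{M}_{i}$ depends only on comparability relations among elements of $T$, all of which live in $\textbf{F}$ as well. Hence no new verification of well-definedness, of the residual formula, or of the lattice/monoid structure is needed beyond what was already carried out in Lemma \ref{Downsets give filters}.
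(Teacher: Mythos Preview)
Your proposal is correct and follows exactly the same approach as the paper: observe that $S$ is a downset of the forest $\textbf{T}$ and apply Lemma~\ref{Downsets give filters} with $\textbf{T}$ as the ambient forest. The paper's proof is a two-line version of what you wrote, omitting the routine verifications that $\textbf{T}$ is a forest and that the complement of $S$ in $T$ is $S^{c}\cap T$.
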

\begin{proof}
Since $S\subseteq T$ and $S,T\in \dec{\textbf{F}}$ we get that $S\in \dec{\textbf{T}}$. The result follows from Lemma \ref{Downsets give filters}.
\end{proof}

\subsection{Forest products are sheaves}

In every poset $\textbf{F}$ the collection $\dec{\textbf{F}}$ of downsets of $\textbf{F}$ defines a topology over $F$ called the \emph{Alexandrov topology} on $\textbf{F}$. Let $S,T\in \dec{\textbf{F}}$ be such that $S\subseteq T$ and $\{\textbf{M}_{i}\}_{i\in \textbf{F}}$ be a collection of MTL-chains. Observe that if $h\in \bigotimes_{i\in \textbf{T}}\textbf{M}_{i}$ then the restriction $h|_{S}$ is an element of $\bigotimes_{i\in \textbf{S}}\textbf{M}_{i}$, so the assigment that sends $T\in \dec{\textbf{F}}$ to $\bigotimes_{i\in \textbf{T}}\textbf{M}_{i}$ defines a presheaf $\Pprod:\dec{\textbf{F}}^{op}\rightarrow \MTL$.

\begin{lem}\label{Description of actions of forest products}
Let $\textbf{F}$ be a forest and $\{\textbf{M}_{i}\}_{i\in \textbf{F}}$ a collection of MTL-chains. Then, for every $S\in \dec{\textbf{F}}$  \[\Pprod(S)\cong \Pprod(F)/\filter{S}. \]
\end{lem}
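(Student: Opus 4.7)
The plan is to build a natural surjective MTL-morphism $\rho : \Pprod(F)\to \Pprod(S)$ by restriction, show its kernel filter is exactly $\filter{S}$, and then invoke the universal property of quotients recalled in the preliminaries to conclude that the induced map $\bar{\rho}: \Pprod(F)/\filter{S} \to \Pprod(S)$ is an isomorphism.

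First I would define $\rho(h) := h|_{S}$ and check it lands in $\Pprod(S)$. Since $S$ is a downset, for any $i\in S$ the set $\{j\in F \mid j<i\}$ coincides with $\{j\in S \mid j<i\}$, so the defining condition in Definition \ref{Poset Product} for membership in $\Pprod(S)$ is inherited from the corresponding condition for $\Pprod(F)$. Surjectivity of $\rho$ is equally straightforward: given $g\in \Pprod(S)$, extend it by $h(i)=g(i)$ for $i\in S$ and $h(i)=0_{i}$ for $i\notin S$; the downset property of $S$ ensures no new violations of the forest-product condition arise, so $h\in \Pprod(F)$ and clearly $\rho(h)=g$.

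Next I would verify that $\rho$ is an MTL-algebra morphism. Monoid and lattice operations are defined pointwise in both algebras, so these are trivially preserved. The delicate point, which I expect to be the main obstacle, is the residual: for $f,g\in \Pprod(F)$ and $i\in S$, the value $(f\to g)(i)$ is determined by the behaviour of $f,g$ at all $j<i$ in $F$, whereas $(f|_{S}\to g|_{S})(i)$ only sees $j<i$ with $j\in S$. These two sets coincide precisely because $S$ is a downset, so both definitions produce the same value on $S$. Hence $\rho$ preserves $\to$ as well.

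Finally I would identify the kernel filter. By construction $\{h\in \Pprod(F)\mid \rho(h)=1\}=\filter{S}$. Applying the universal property of $\Pprod(F)\to \Pprod(F)/\filter{S}$ (recalled in the preliminaries) to the morphism $\rho$, which clearly sends every element of $\filter{S}$ to $1$, yields a unique MTL-morphism $\bar{\rho}:\Pprod(F)/\filter{S}\to \Pprod(S)$ factorising $\rho$. Surjectivity of $\bar{\rho}$ follows from surjectivity of $\rho$. For injectivity, if $\rho(f)=\rho(g)$ then $f|_{S}=g|_{S}$; using the downset property of $S$ as above, one checks that both $(f\to g)|_{S}=1$ and $(g\to f)|_{S}=1$, so $f\to g, g\to f\in \filter{S}$, giving $[f]=[g]$ in the quotient. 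Thus $\bar{\rho}$ is the desired isomorphism.
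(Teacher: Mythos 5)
Your proposal is correct and follows essentially the same route as the paper: the restriction map $\Pprod(F)\to\Pprod(S)$, its surjectivity, the universal property of the quotient by $\filter{S}$, and injectivity of the induced map. You merely spell out the details (well-definedness of restriction and extension by $0_i$, preservation of the residual via the downset property, and the kernel computation) that the paper declares ``clear'' or ``straightforward.''
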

\begin{proof}
Let $r:\Pprod(F) \rightarrow \Pprod(S)$ be the restriction to $S$. It is clear that $r$ is a surjective morphism of MTL-algebras such that $r(h)=1$, for every $h\in \filter{S}$. Then, by the universal property of the canonical homomorphism $\beta:\Pprod(F) \rightarrow \Pprod(F)/\filter{S}$, there exists a unique morphism  of MTL-algebras $\alpha: \Pprod(F)/\filter{S}\rightarrow \Pprod(S)$ such that the diagram below
\begin{displaymath}
\xymatrix{
\Pprod(F) \ar[r]^-{\beta} \ar[dr]_-{r} & \Pprod(F)/\filter{S} \ar@{-->}[d]^-{\alpha}
\\
 & \Pprod(S)
}
\end{displaymath}
\noindent
commutes. Observe that $\alpha \beta = r$, so since $\beta$ is surjective, it follows that $\alpha$ is surjective too. The verification of the injectivity of $\alpha$ is straightforward.

\end{proof}

\begin{coro}
Let $\textbf{F}$ be a forest, $S,T\in \dec{\textbf{F}}$ such that $S\subseteq T$ and $\{\textbf{M}_{i}\}_{i\in \textbf{F}}$ a collection of MTL-chains. Then $\Pprod(S)\cong \Pprod(T)/\Filter{S}{T}$.
\end{coro}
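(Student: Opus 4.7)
The plan is to reduce this corollary to Lemma \ref{Description of actions of forest products} by applying it inside the sub-forest $T$ rather than inside $F$. First I would observe that since $S\subseteq T$ and $S\in \dec{\textbf{F}}$, the set $S$ is automatically a downset of the sub-poset $\textbf{T}$ (with the order inherited from $\textbf{F}$); and $\textbf{T}$ itself is a forest, since the property ``$\downarrow i$ is totally ordered'' is preserved when passing to a downset of a forest. Thus the collection $\{\textbf{M}_{i}\}_{i\in \textbf{T}}$ satisfies the hypotheses of Definition \ref{Poset Product} for the forest $\textbf{T}$, and the forest product $\bigotimes_{i\in \textbf{T}}\textbf{M}_{i}=\Pprod(T)$ is an MTL-algebra.

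Next I would apply Lemma \ref{Description of actions of forest products} with $\textbf{T}$ in the role of $\textbf{F}$ and $S$ in the role of the downset. That lemma yields an isomorphism
\[
\Pprod(S)\;\cong\;\Pprod(T)/X_{S},
\]
where the filter $X_{S}$ is, in the context of $\textbf{T}$, the set $\{h\in \bigotimes_{i\in \textbf{T}}\textbf{M}_{i}\mid h|_{S}=1\}$. By the very definition given in Corollary \ref{Description of particular filters}, this is precisely $\Filter{S}{T}$. Substituting, I obtain $\Pprod(S)\cong \Pprod(T)/\Filter{S}{T}$, which is the desired isomorphism.

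There is no real obstacle here beyond bookkeeping: the only thing one has to verify carefully is that applying Lemma \ref{Description of actions of forest products} to $\textbf{T}$ produces the \emph{same} object $\Pprod(S)=\bigotimes_{i\in \textbf{S}}\textbf{M}_{i}$ as the one defined originally relative to $\textbf{F}$. But this is immediate from Definition \ref{Poset Product}, since the forest product depends only on the indexing forest and on the chains at each index, and both of these are unchanged when one restricts attention from $\textbf{F}$ to $\textbf{T}$. Thus the corollary follows immediately from Lemma \ref{Description of actions of forest products} with no new computation required.
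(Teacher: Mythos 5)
Your proof is correct, but it takes a genuinely different route from the paper's. You apply Lemma \ref{Description of actions of forest products} a single time, inside the sub-forest $\textbf{T}$: since a downset of a forest is again a forest and $S\subseteq T$ with $S,T\in\dec{\textbf{F}}$ forces $S\in\dec{\textbf{T}}$, the lemma gives $\Pprod(S)\cong\Pprod(T)/\Filter{S}{T}$ directly, the only bookkeeping being that $\bigotimes_{i\in \textbf{S}}\textbf{M}_{i}$ and the filter $\{h\in\bigotimes_{i\in \textbf{T}}\textbf{M}_{i}\mid h|_{S}=1\}$ do not depend on whether $S$ is viewed inside $\textbf{T}$ or inside $\textbf{F}$ --- which is indeed immediate from Definition \ref{Poset Product}. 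The paper instead stays in the ambient algebra $\Pprod(F)$: it uses the lemma to write $\Pprod(T)\cong\Pprod(F)/\filter{T}$, observes that $\Filter{S}{T}\cong\filter{S}/\filter{T}$ under this identification, and then concludes via the second isomorphism theorem for universal algebras, using $\Pprod(S)\cong\Pprod(F)/\filter{S}$. Your version is the more economical of the two, as it needs neither the isomorphism theorem nor the (not entirely trivial) identification of $\Filter{S}{T}$ with a quotient filter; the paper's version keeps everything expressed as quotients of the single algebra $\Pprod(F)$. Note also that the relativization step you use is exactly the one the paper itself performs in the proof of Corollary \ref{Description of particular filters}, so your reduction is fully in the spirit of the text.
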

\begin{proof}
Due to Lemma \ref{Description of actions of forest products}, $\Pprod(T)\cong \Pprod(F)/\filter{T}$. Observe that $\Filter{S}{T}\cong \filter{S}/\filter{T}$, thus the result follows as a direct consequence of the second isomorphism theorem (Theorem 6.15 of \cite{SB1981}).
\end{proof}

\begin{lem}\label{Chains characterized by prime filters}
Let $A$ be a non-trivial MTL-algebra. Then $A/P$ is an MTL-chain if and only if $P$ is a non trivial prime filter.
\end{lem}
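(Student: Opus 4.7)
The plan is to prove the two implications separately. Both directions hinge on translating between "$a \to b \in P$" in $A$ and "$[a] \leq [b]$" in $A/P$, which is the standard characterization of the quotient order (since $[a] \leq [b]$ iff $[a \to b] = [1]$ iff $a \to b \in P$).

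For the forward direction, assume $P$ is a (non-trivial) prime filter, which by the definition used in the paper already encodes $0 \notin P$. Given $[x], [y] \in A/P$, prelinearity gives $(x \to y) \vee (y \to x) = 1 \in P$. Primeness forces $x \to y \in P$ or $y \to x \in P$, which in the quotient reads as $[x] \leq [y]$ or $[y] \leq [x]$. Hence $A/P$ is totally ordered, and $0 \notin P$ ensures $[0] \neq [1]$, so $A/P$ is a genuine (non-trivial) MTL-chain.

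For the reverse direction, assume $A/P$ is an MTL-chain (interpreted as non-trivial, consistent with the paper's terminology calling $\mathbf{1}$ the \emph{trivial} MTL-chain). Non-triviality of $A/P$ gives $[0] \neq [1]$, equivalently $0 \notin P$. To check primeness, take $x \vee y \in P$, so that $[x] \vee [y] = [x \vee y] = [1]$ in $A/P$. By totality of the quotient order, without loss of generality $[x] \leq [y]$, whence $[y] = [x] \vee [y] = [1]$, i.e.\ $y \in P$.

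No step looks technically delicate; the main thing to get right is the bookkeeping of what "non-trivial" means on either side of the biconditional, namely $0 \notin P$ versus $[0] \neq [1]$ in $A/P$, and the use of the canonical identification of the quotient order with membership of the implication in $P$. Everything else is a direct application of prelinearity and the definition of a prime filter given in the Preliminaries.
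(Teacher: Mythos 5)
Your proof is correct: the identification $[a]\leq[b]$ in $A/P$ with $a\to b\in P$, combined with prelinearity and primeness in one direction and totality of the quotient order in the other, is exactly the standard argument, and the paper itself states this lemma without proof. Your explicit bookkeeping of non-triviality ($0\notin P$ versus $[0]\neq[1]$) is a worthwhile clarification, since the paper counts $\mathbf{1}$ as the trivial MTL-chain and the biconditional only holds under the non-trivial reading you adopt.
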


\begin{lem}\label{Chains determines prime filters of poset products}
Let $\textbf{F}$ be a forest, $S\in \dec{\textbf{F}}$ and $\{\textbf{M}_{i}\}_{i\in \textbf{F}}$ a collection of MTL-chains. Then $\filter{S}$ is prime if and only if $S$ is totally ordered.
\end{lem}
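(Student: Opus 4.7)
My plan is to chain together three of the lemmas already established in this section, reducing the claim to a routine equivalence. The central observation is that Lemma~\ref{Description of actions of forest products} identifies the quotient $\Pprod(F)/\filter{S}$ with $\Pprod(S) = \bigotimes_{i\in S}\textbf{M}_{i}$, so the primality of the filter $\filter{S}$ becomes a statement about a forest product on the subforest $S$.

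For the forward direction, assuming $\filter{S}$ is prime, I would apply Lemma~\ref{Chains characterized by prime filters} to see that $\Pprod(F)/\filter{S}$ is an MTL-chain; the isomorphism supplied by Lemma~\ref{Description of actions of forest products} then transfers this chain structure to $\bigotimes_{i\in S}\textbf{M}_{i}$, and finally Lemma~\ref{chain products are MTL-chains}, applied to the subforest $S$ with the collection $\{\textbf{M}_{i}\}_{i\in S}$, gives that $S$ is totally ordered. The converse reverses the same line of reasoning: a totally ordered $S$ forces $\bigotimes_{i\in S}\textbf{M}_{i}$ to be an MTL-chain by Lemma~\ref{chain products are MTL-chains}, hence $\Pprod(F)/\filter{S}$ is an MTL-chain by Lemma~\ref{Description of actions of forest products}, which by Lemma~\ref{Chains characterized by prime filters} yields primality of $\filter{S}$.

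The only point that requires a brief remark is non-triviality: Lemma~\ref{Chains characterized by prime filters} speaks of a \emph{non-trivial} prime filter, so one must note that $\filter{S}$ is a proper filter of $\Pprod(F)$. This was already observed right after the definition of $\filter{S}$ under the standing assumption that $S$ is a proper downset, so for any $i \in F \setminus S$ one can produce an element of $\Pprod(F)$ taking a value different from $1$ at $i$, witnessing $\filter{S} \neq \Pprod(F)$. With this remark in place no further computation is needed, and the argument is essentially a compilation of previously proved equivalences; I do not expect any genuine obstacle beyond this book-keeping.
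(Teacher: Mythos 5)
Your proposal is correct and follows exactly the paper's own argument: identify $\Pprod(F)/\filter{S}$ with $\bigotimes_{i\in S}\textbf{M}_{i}$ via Lemma~\ref{Description of actions of forest products}, then combine Lemma~\ref{chain products are MTL-chains} with Lemma~\ref{Chains characterized by prime filters}. Your extra remark on the properness of $\filter{S}$ is a harmless (and slightly more careful) addition to the same proof.
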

\begin{proof}
Observe that asking $S$ to be totally ordered, is equivalent, by Lemma  \ref{chain products are MTL-chains}, to asking $\bigotimes_{i\in \textbf{S}}\textbf{M}_{i}$ to be an MTL-chain. By Lemma \ref{Description of actions of forest products}, $\bigotimes_{i\in \textbf{S}}\textbf{M}_{i}\cong \bigotimes_{i\in \textbf{F}}\textbf{M}_{i}/\filter{S}$. Hence, the result follows from the first remark and Lemma \ref{Chains characterized by prime filters}.
\end{proof}

\noindent
Let $\Shv(\textbf{P})$ be the category of sheaves over the Alexandrov space $(P,\dec{\textbf{P}})$. Since the theory of MTL-algebras is algebraic, it is well-known that an MTL-algebra in $\Shv(\textbf{P})$ is a functor $\dec{\textbf{P}}^{op} \rightarrow \MTL$ such that the composite presheaf $\dec{\textbf{P}}^{op} \rightarrow \MTL \rightarrow \Set$ is a sheaf.

\begin{lem}\label{Forest Product are sheaves}
Let $\textbf{F}$ be a forest and $\{\textbf{M}_{i}\}_{i\in \textbf{F}}$ a collection of MTL-chains. The presheaf $\Pprod:\dec{\textbf{P}}^{op}\rightarrow \MTL$, with $\Pprod(T)=\bigotimes_{i\in \textbf{T}}\textbf{M}_{i}$, is an MTL-algebra in $\Shv(\textbf{P})$.
\end{lem}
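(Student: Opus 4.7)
The plan is to verify the sheaf condition directly on the underlying sets. Since MTL-algebras form an algebraic (finitary) variety, a presheaf of MTL-algebras on $\dec{\textbf{F}}^{op}$ is a sheaf iff the composite with the forgetful functor to $\Set$ is a sheaf; and since the restriction morphisms of $\Pprod$ are literally function restrictions, compatibility and gluing can be phrased in elementary terms. Concretely, given $T\in\dec{\textbf{F}}$ and an open cover $T=\bigcup_{\alpha\in A}S_{\alpha}$ by downsets $S_{\alpha}\subseteq T$, together with a compatible family $\{h_{\alpha}\in\Pprod(S_{\alpha})\}_{\alpha\in A}$ (i.e.\ $h_{\alpha}|_{S_{\alpha}\cap S_{\beta}}=h_{\beta}|_{S_{\alpha}\cap S_{\beta}}$ for all $\alpha,\beta$), I must exhibit a unique $h\in\Pprod(T)$ with $h|_{S_{\alpha}}=h_{\alpha}$ for every $\alpha$.

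First I would define the candidate gluing pointwise: for $i\in T$ pick any $\alpha$ with $i\in S_{\alpha}$ and set $h(i):=h_{\alpha}(i)\in\textbf{M}_{i}$. Well-definedness is immediate from the compatibility assumption, and any potential gluing must satisfy this equality pointwise, so uniqueness is automatic. It is also clear that $h$ maps $i$ to an element of $\textbf{M}_{i}$, so $h\in\bigl(\bigcup_{i\in T}\textbf{M}_{i}\bigr)^{T}$.

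The only real step is checking that $h$ actually belongs to the forest product $\bigotimes_{i\in T}\textbf{M}_{i}$, i.e.\ that it satisfies condition $(1)$ of Definition \ref{Poset Product}. Fix $i\in T$ with $h(i)\neq 0_{i}$ and $j<i$ in $T$; the goal is to conclude $h(j)=1$. Choose $\alpha\in A$ with $i\in S_{\alpha}$. Because $S_{\alpha}$ is a downset of $\textbf{F}$ and $j<i$, we automatically have $j\in S_{\alpha}$; this is the place where the Alexandrov structure (downsets as opens) does all the work. Now $h_{\alpha}(i)=h(i)\neq 0_{i}$ and $h_{\alpha}\in\bigotimes_{k\in S_{\alpha}}\textbf{M}_{k}$, so Definition \ref{Poset Product} forces $h_{\alpha}(j)=1$, and hence $h(j)=h_{\alpha}(j)=1$, as required.

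The main obstacle is essentially absent: the proof reduces to a one-line pointwise gluing because opens in the Alexandrov topology are closed downward, which is precisely what is needed to transfer the forest-product condition from each piece $S_{\alpha}$ to the whole of $T$. The only point meriting care is to invoke the defining condition of $\bigotimes_{i\in T}\textbf{M}_{i}$ in its original form (or equivalently the characterisation in Lemma \ref{Equivalent forms of forest product}) rather than attempt a more global argument.
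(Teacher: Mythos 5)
Your proposal is correct and follows essentially the same route as the paper: define the gluing pointwise from the matching family, use the fact that each $S_{\alpha}$ is a downset to transfer condition $(1)$ of Definition \ref{Poset Product} from $h_{\alpha}$ to $h$, and get uniqueness from the pointwise determination of any amalgamation. No substantive differences.
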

\begin{proof}
Suppose that $T=\bigcup_{\alpha \in I} S_{\alpha}$, with $S_{\alpha},T\in \dec{\textbf{F}}$, for every $\alpha \in I$, and let $h_{\alpha}\in \Pprod(S_{\alpha})$ be a matching family. Thus, for every $\alpha \neq \beta$ in $I$:

\begin{equation}\label{ecuation}
h_{\alpha}|_{S_{\alpha}\cap S_{\beta}}=h_{\beta}|_{S_{\alpha}\cap S_{\beta}}
\end{equation}

\noindent
Let us consider the following function:

\begin{displaymath}
\begin{tabular}{ccll}
$h:$ & $I$ & $\rightarrow$ & $\bigcup_{i\in \textbf{T}}\textbf{M}_{i}$
\\
 & $i$ & $\mapsto$ & $h_{\alpha}(i),$ if $i\in S_{\alpha}$
\end{tabular}
\end{displaymath}
\\

\noindent
Observe that (\ref{ecuation}) guarantees that $h$ is well defined. To check that $h\in \bigotimes_{i\in \textbf{T}}\textbf{M}_{i}$, let us suppose that $i\in T$ and $h(i)\neq 0_{i}$. If $j<i$ then, since $T=\bigcup_{\alpha \in I} S_{\alpha}$, there exists some $\beta \in I$ such that $i\in S_{\beta}$. In such case, $j\in S_{\beta}$, since $S_{\beta}\in \dec{\textbf{F}}$. Then $h(i)=h_{\beta}(i)$ and $h(j)=h_{\beta}(j)$. Since $h_{\beta}\in \bigotimes_{i\in \textbf{S}_{\beta}}\textbf{M}_{i}$, we conclude that $h(j)=h_{\beta}(j)=1$. Therefore $h$ amalgamates $\{h_{\alpha}\}_{\alpha \in I}$. To verify the uniqueness of $h$, let us suppose that there exists $f\in \Pprod(T)$ such that $f|_{S_{\alpha}}=h_{\alpha}$, for every $\alpha\in I$. Then,
 \[(f|_{S_{\alpha}})|_{S_{\alpha}\cap S_{\beta}}=(h|_{S_{\alpha}})|_{S_{\alpha}\cap S_{\beta}}=(h|_{\beta})|_{S_{\alpha}\cap S_{\beta}}=(f|_{\beta})|_{S_{\alpha}\cap S_{\beta}}.\]

\noindent
For $i\in S_{\alpha}$ we have that $f(i)=(f|_{\alpha})(i)=h_{\alpha}(i)=h(i)$. Since this happens for every $\alpha\in I$, $f=h$.
\end{proof}
\noindent
Let $\textbf{F}$ be a forest and $i\in \textbf{F}$. Since $\Pprod$ is a presheaf of MTL-algebras, its fiber over $i$ is the set of \emph{germs over $i$} and is written as $\Pprod_{i}$ (c.f. II.5 \cite{MM1992}). Recall that $f,g\in \Pprod(S)$ have the same germ at $i$ if there exists some $R\in \dec{\textbf{F}}$ with $i\in R$, such that $R\subseteq S\cap T$ and $f|_{R}=g|_{R}$. Hence, $\Pprod_{i}$ results to be a ``suitable quotient'' of the MTL-algebra $\Pprod(T)$. By Lemma \ref{Forest Product are sheaves}, $\Pprod_{i}$ can be described as the filtering colimit over those $T\in \dec{\textbf{F}}$ such that $i\in T$, i.e.,

\[ \Pprod_{i}=\underrightarrow{lim}_{ {i\in \textbf{T}}}\; \Pprod(T). \]

\noindent
Thereby, for every $T\in \dec{\textbf{F}}$ the map $\varphi_{T}:\Pprod(T) \rightarrow \Pprod_{i}$ that sends $h\in \Pprod(T)$ in its equivalence class ``\emph{modulo germ at $i$}'' result to be  a surjective morphism of MTL-algebras. We write $[h]_{T}$ for the equivalence class of $h$ in $\Pprod_{i}$.

\begin{lem}\label{fibers of Poset Product}
Let $\textbf{F}$ be a forest and $\{\textbf{M}_{i}\}_{i\in \textbf{F}}$ a collection of MTL-chains. For every $i\in F$, $\Pprod(\downarrow i)\cong \Pprod_{i}$ in $\MTL$.
\end{lem}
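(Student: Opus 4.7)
The plan is to exploit the fact that $\downarrow i$ is the smallest downset containing $i$, so in the directed system computing the stalk $\Pprod_i$, the open set $\downarrow i$ is terminal with respect to reverse inclusion. First I would observe that for every $T \in \dec{\textbf{F}}$ with $i \in T$, we have $\downarrow i \subseteq T$ (since $T$ is a downset containing $i$, every $j \leq i$ lies in $T$). Hence $\downarrow i$ is the minimum of the collection $\mathcal{N}_{i} = \{T \in \dec{\textbf{F}} \mid i \in T\}$ ordered by inclusion.

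Next I would produce the isomorphism explicitly. In one direction, take the canonical morphism $\varphi_{\downarrow i} : \Pprod(\downarrow i) \to \Pprod_{i}$ coming from the colimit cone (which is already an MTL-morphism by the construction of the filtered colimit in $\MTL$). In the other direction, given $[h]_{T} \in \Pprod_{i}$ with $h \in \Pprod(T)$ and $i \in T$, the restriction $h|_{\downarrow i}$ lives in $\Pprod(\downarrow i)$ because $\downarrow i \subseteq T$; moreover the pair $(h, h|_{\downarrow i})$ agrees on $\downarrow i$, so $[h]_{T} = [h|_{\downarrow i}]_{\downarrow i}$ in $\Pprod_{i}$. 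This already shows $\varphi_{\downarrow i}$ is surjective.

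For injectivity, suppose $\varphi_{\downarrow i}(f) = \varphi_{\downarrow i}(g)$ for $f, g \in \Pprod(\downarrow i)$. By the germ definition, there is some $R \in \dec{\textbf{F}}$ with $i \in R$ and $R \subseteq \downarrow i$ such that $f|_{R} = g|_{R}$. But since $i \in R$ and $R$ is a downset, the minimality established in the first step forces $\downarrow i \subseteq R$, hence $R = \downarrow i$ and $f = g$. Thus $\varphi_{\downarrow i}$ is an MTL-isomorphism.

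The only genuine verification is the first step — that the minimum neighborhood of $i$ in the Alexandrov topology is exactly $\downarrow i$ — but this is immediate from the definition of downset. After that, the categorical fact that a filtered colimit over a diagram possessing a terminal object is realized at that object (applied here in $\MTL$, where filtered colimits are computed as in $\Set$ and then equipped with the pointwise MTL-operations) does all the remaining work. No obstacle of substance arises; the argument is essentially the same one that shows stalks of Alexandrov sheaves are given by the values at the minimal open neighborhood.
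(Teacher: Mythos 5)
Your proposal is correct and follows essentially the same route as the paper: both identify $\downarrow i$ as the minimal downset (hence minimal Alexandrov-open neighborhood) containing $i$, obtain surjectivity of $\varphi_{\downarrow i}$ from the colimit description of the stalk, and derive injectivity from the fact that any $R \in \dec{\textbf{F}}$ with $i \in R$ and $R \subseteq \downarrow i$ must equal $\downarrow i$. The additional remark about filtered colimits over diagrams with a terminal object is a clean categorical packaging of the same argument.
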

\begin{proof}
Let $i\in F$ and consider $\varphi_{\downarrow i}: \Pprod(\downarrow i) \rightarrow \Pprod_{i}$. From the above discussion it is clear that $\varphi_{\downarrow i}$ is surjective. To check that it is injective, let $f,g \in \Pprod(\downarrow i)$ be such that $[f]_{\downarrow i}=[g]_{\downarrow i}$. There exists some $R\in \dec{\textbf{F}}$ with $i\in R$, such that $R\subseteq \downarrow i$ and $f|_{R}=g|_{R}$. Since $\downarrow i$ is the smallest downset to which $i$ belongs, we get that $R=\downarrow i$. Then $f=g$. Hence $\varphi_{\downarrow i}$ is an isomorphism in $\MTL$.
\end{proof}

\begin{coro}\label{Forest Product Sheaf is MTLchain}
Let $\textbf{F}$ be a forest and $\{\textbf{M}_{i}\}_{i\in \textbf{F}}$ a collection of MTL-chains. Then $\Pprod$ is a sheaf of MTL-chains.
\end{coro}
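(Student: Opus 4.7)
The plan is to combine three facts that have already been assembled: (i) by Lemma \ref{Forest Product are sheaves}, $\Pprod$ is a sheaf of MTL-algebras on the Alexandrov space $(F, \dec{\textbf{F}})$, so the only thing left to verify is that every stalk is totally ordered; (ii) by Lemma \ref{fibers of Poset Product}, for each $i \in F$ the stalk $\Pprod_i$ is isomorphic, as an MTL-algebra, to $\Pprod(\downarrow i) = \bigotimes_{j \in \downarrow i} \textbf{M}_j$; (iii) by the very definition of a forest, $\downarrow i$ is a totally ordered subset of $\textbf{F}$, so Lemma \ref{chain products are MTL-chains} applies to the restricted family $\{\textbf{M}_j\}_{j \in \downarrow i}$.

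Concretely, I would proceed as follows. First, fix $i \in F$ and observe that $\downarrow i$ inherits the order of $\textbf{F}$; by the definition of forest recalled just before Lemma \ref{Spec is a root system}, this inherited order is total. Second, apply Lemma \ref{chain products are MTL-chains} to the forest $\downarrow i$ and the restricted family of MTL-chains $\{\textbf{M}_j\}_{j \in \downarrow i}$ to conclude that $\bigotimes_{j \in \downarrow i} \textbf{M}_j$ is an MTL-chain. Third, invoke Lemma \ref{fibers of Poset Product} to transport the chain structure along the isomorphism $\Pprod(\downarrow i) \cong \Pprod_i$ in $\MTL$, obtaining that $\Pprod_i$ is an MTL-chain. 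Since this holds for every $i \in F$, and $\Pprod$ is already a sheaf of MTL-algebras by Lemma \ref{Forest Product are sheaves}, it is a sheaf of MTL-chains.

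There is essentially no obstacle: each ingredient has been proved in the preceding lemmas, and the proof reduces to stringing them together. The only mild subtlety worth spelling out is that the restricted family $\{\textbf{M}_j\}_{j \in \downarrow i}$ still satisfies the compatibility hypothesis of Definition \ref{Poset Product} (common neutral element up to isomorphism), which is immediate because this hypothesis is inherited from the global family on $\textbf{F}$. Hence the proof should be a short two-sentence verification combining Lemmas \ref{Forest Product are sheaves}, \ref{fibers of Poset Product}, and \ref{chain products are MTL-chains}.
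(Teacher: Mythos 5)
Your proof is correct and follows exactly the paper's argument: the paper's own proof is the one-line ``Apply Lemmas \ref{fibers of Poset Product} and \ref{chain products are MTL-chains}'', and you have simply spelled out the details (total order of $\downarrow i$, inheritance of the compatibility hypothesis, transport along the isomorphism) that the paper leaves implicit. No issues.
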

\begin{proof}
Apply Lemmas \ref{fibers of Poset Product} and \ref{chain products are MTL-chains}.
\end{proof}

\noindent
Observe that the same argument used in Example $2$ of \cite{BM2011} can be applied to prove that when the index set is a finite chain, the forest product and the ordinal sum of MTL-algebras coincide. The following result will be relevant for the last part of this paper.

\begin{coro}\label{finite Forest Product Sheaf is MTLchain}
Let $\textbf{F}$ be a finite forest and $\{\textbf{M}_{i}\}_{i\in \textbf{F}}$ a collection of MTL-chains. Then for every $i\in \textbf{F}$, $\Pprod_{i}\cong \bigoplus_{i\leq j} \textbf{M}_{i}$.
\end{coro}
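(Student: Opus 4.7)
The plan is to chain together two results that are already available. First, I would apply Lemma \ref{fibers of Poset Product}, which identifies the germ at $i$ with the forest product over the downset of $i$; that is, $\Pprod_{i}\cong \Pprod(\downarrow i)=\bigotimes_{j\in \downarrow i}\textbf{M}_{j}$. This works because $\downarrow i$ is the smallest downset of $\textbf{F}$ containing $i$, hence the smallest Alexandrov open neighbourhood of $i$, so the filtering colimit defining the germ collapses to a single value.

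Next, since $\textbf{F}$ is a forest, the very definition of forest ensures that $\downarrow i$ is totally ordered; and since $\textbf{F}$ is finite by hypothesis, $\downarrow i$ is a finite chain. At this point I would invoke the remark immediately preceding the statement, according to which the argument of Example~2 of \cite{BM2011} shows that for a finite chain as index set the forest product of MTL-chains and their ordinal sum coincide. Applying this gives $\bigotimes_{j\in \downarrow i}\textbf{M}_{j}\cong \bigoplus_{j\leq i}\textbf{M}_{j}$. Composing the two isomorphisms yields the claim.

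I do not expect a real obstacle: all the substance has been handled either in Lemma \ref{fibers of Poset Product} or in the cited remark. The only place where a subtle verification would be needed, if one wanted a fully self-contained argument instead of citing the remark, is in matching the piecewise definition of the residual in Definition \ref{Poset Product} against the residual in an ordinal sum. On a chain, however, the clause ``$0_{i}$ if there is $j<i$ with $h(j)\not\leq_{j} g(j)$'' in the forest product becomes exactly the clause ``$y$ if $x\in A_{i}$, $y\in A_{j}$ with $i>j$'' in the ordinal sum, once one identifies an element $h$ of the forest product with its unique non-trivial coordinate (together with the level at which it lives). Granting this identification, the corollary reduces to the concatenation of Lemma \ref{fibers of Poset Product} with the preceding remark.
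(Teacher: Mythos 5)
Your proposal is correct and follows exactly the same route as the paper's own proof: identify $\Pprod_{i}$ with $\Pprod(\downarrow i)$ via Lemma \ref{fibers of Poset Product}, observe that $\downarrow i$ is a finite chain because $\textbf{F}$ is a finite forest, and then invoke the remark that the forest product over a finite chain coincides with the ordinal sum. Your additional comment on matching the residual clauses is a reasonable gloss on what that remark (via Example~2 of \cite{BM2011}) is implicitly covering, but it adds nothing the paper does not already delegate to that observation.
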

\begin{proof}
If $\textbf{F}$ is a finite forest then $\downarrow j$ is a finite chain for every $j\in \textbf{F}$. From the observed above respect to the forest product of MTL-algebras indexed by a finite chain, we conclude that $\Pprod(\downarrow j)\cong \bigoplus_{i\leq j} \textbf{M}_{i}$, which clearly is an MTL-chain. Therefore, from Lemma \ref{fibers of Poset Product} the result follows.
\end{proof}
\noindent
We can now put together the Lemma \ref{Forest Product are sheaves}, and Corollary \ref{Forest Product Sheaf is MTLchain} in the following statement:
\\

\noindent
\emph{The forest product of MTL-chains is essentially a sheaf of MTL-algebras over an Alexandrov space whose fibers are MTL-chains.}

\section{From finite forest products to MTL-algebras}
\label{section from finite forest products to MTL-algebras}

In this section we show that a wide class of finite MTL-algebras can be represented as finite forest products of finite archimedean MTL-chains. To do so, we begin by showing that there exist a functor $\Vuelta$ from the category of finite labeled forests to the category of finite MTL-algebras. Moreover, we will prove that the functor $\Vuelta$ is left adjoint to the functor $\Ida$ and the counit of such adjunction is an isomorphism. It is worth to mention that this result is strongly based in the characterization of the join irreducible elements of a finite forest product of finite archimedean MTL-chains.
\\

\noindent
In general, the unit of the adjoint pair $\Ida \dashv \Vuelta$ is not an isomorphism. In subsection \ref{The duality theorem} we present a class of finite MLT-algebras which solves that problem. Finally, in subsection \ref{An explicit description of finite forest products} we give a simple description of the forest product of finite MTL-chains in terms of ordinal sums and direct products.
\\

\noindent
Let $l:\textbf{F}\rightarrow \Skel$ and $m:\textbf{G}\rightarrow \Skel$ be finite labeled forests. If $(\varphi, \mathcal{F}):l\rightarrow m$ is a morphism of finite labeled forests (see Section \ref{section Finite labeled forests}) then $\varphi: F\rightarrow G$ is a p-morphism and $\mathcal{F}=\{f_{i}\}_{i\in F}$ is a family of morphisms $f_{i}:(m\circ \varphi)(i)\rightarrow l(i)$ of MTL-algebras. Recall that a morphism of posets is a p-morphism if and only if it is open respect to the Alexandrov topologies of the involved posets, so since $F\in \dec{\textbf{F}}$ it follows that $\varphi(F)\in \dec{\textbf{G}}$. From Lemma \ref{forest product is a MTL-algebra}, we get that $\bigotimes_{k\in \varphi(F)}m(k)$ is an MTL-algebra. Notice that $m\circ \varphi:\textbf{F}\rightarrow \Skel$ is a finite labeled forest so we can consider the forest product $\bigotimes_{i\in \textbf{F}}(m\circ \varphi)(i)$. Since $\bigcup_{k\in \varphi(F)}m(k)=\bigcup_{i\in \textbf{F}}(m\circ \varphi)(i)$, we define, for every $h\in \bigotimes_{k\in \varphi(F)}m(k)$, the map $\gamma: \bigotimes_{k\in \varphi(F)}m(k) \rightarrow \bigotimes_{i\in \textbf{F}}(m\circ \varphi)(i)$ as the composite
\begin{displaymath}
\xymatrix{
F \ar[r]^-{\varphi} \ar @/_1pc/ [rr]_-{\gamma(h)} & \varphi(F)\ar[r]^-{h} & \bigcup_{i\in \textbf{F}}(m\circ \varphi)(i)
}
\end{displaymath}

\begin{lem}\label{First igredient}
The map $\gamma$, defined above, is a morphism of MTL-algebras.
\end{lem}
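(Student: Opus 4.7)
The plan is to verify two things in sequence: first that $\gamma(h)$ actually belongs to the forest product $\bigotimes_{i\in \textbf{F}}(m\circ \varphi)(i)$ for every input $h$, and second that $\gamma$ commutes with each operation of the MTL-algebra signature. The main obstacle will be the preservation of the residual, whose definition contains a conditional clause that must be matched correctly on both sides using the p-morphism hypothesis.

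For membership, I would take $h\in \bigotimes_{k\in \varphi(F)}m(k)$ and suppose $\gamma(h)(i)=h(\varphi(i))\neq 0_{(m\circ\varphi)(i)}$ for some $i\in F$; the goal is to show $\gamma(h)(j)=1$ whenever $j<i$ in $F$. Monotonicity of the p-morphism $\varphi$ gives $\varphi(j)\leq \varphi(i)$ in $\varphi(F)$, and combining this with the back-condition one checks that the inequality must in fact be strict in $\varphi(F)$. The forest-product condition on $h$ then yields $h(\varphi(j))=1$, that is $\gamma(h)(j)=1$, so $\gamma(h)$ satisfies the clause of Definition~\ref{Poset Product}(1) and belongs to the codomain.

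Preservation of the constants $0$ and $1$ is immediate since $\gamma(h)=h\circ\varphi$ and both constants are the constant functions in each product. Preservation of $\cdot$, $\wedge$, and $\vee$ is automatic because all three are defined pointwise in both forest products (Definition~\ref{Poset Product}(2)): for $\ast\in\{\cdot,\wedge,\vee\}$ one has $\gamma(f\ast g)(i)=(f\ast g)(\varphi(i))=f(\varphi(i))\ast g(\varphi(i))=(\gamma(f)\ast \gamma(g))(i)$.

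The residual step is the delicate part. Fixing $i\in F$, I would show that the guarding clause ``$h(k)\leq h'(k)$ for every $k<\varphi(i)$ in $\varphi(F)$'', which governs the value of $(h\to h')(\varphi(i))$, is equivalent to the guarding clause ``$\gamma(h)(j)\leq \gamma(h')(j)$ for every $j<i$ in $F$'', which governs the value of $(\gamma(h)\to \gamma(h'))(i)$. One direction uses the back-condition of p-morphisms: any $k<\varphi(i)$ in $\varphi(F)$ is $\varphi(z)$ for some $z\leq i$ in $F$, and since $\varphi(z)=k\neq \varphi(i)$ we necessarily have $z<i$, so the inequality transfers. The reverse direction uses monotonicity of $\varphi$ together with the strictness argument of the first step, which guarantees $\varphi(j)<\varphi(i)$ in $\varphi(F)$ whenever $j<i$ in $F$. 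When the clause holds, both residuals evaluate to $h(\varphi(i))\to h'(\varphi(i))=\gamma(h)(i)\to \gamma(h')(i)$; when it fails, both sides evaluate to $0_{(m\circ\varphi)(i)}$. This completes the verification that $\gamma$ is a morphism of MTL-algebras.
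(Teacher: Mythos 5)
Your proposal follows the same route as the paper's proof: first check that $\gamma(h)$ lands in the target forest product, then check preservation of the operations. The paper dismisses the second half as ``straightforward,'' whereas you work it out, and your treatment of the residual --- matching the two guard clauses, with the direction that uses the back-condition of p-morphisms to pull a $k<\varphi(i)$ back to some $z<i$ --- is correct and a genuine gain in explicitness.

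There is, however, a gap at the step you lean on twice: the claim that $j<i$ in $F$ forces $\varphi(j)<\varphi(i)$ in $\varphi(F)$, which you say ``one checks'' from the back-condition. It does not follow. A p-morphism may identify comparable elements: the unique monotone surjection from a two-element chain $\{j<i\}$ onto a one-point poset satisfies the back-condition trivially, and such collapses do arise in this framework as $f^{\ast}$ for suitable MTL-morphisms. If $\varphi(j)=\varphi(i)=k$ and $h(k)\notin\{0_{k},1\}$ (possible as soon as $m(k)$ has more than two elements), then $\gamma(h)=h\circ\varphi$ takes the same intermediate value at the comparable points $j<i$, so $C_{\gamma(h)}$ is not an antichain and $\gamma(h)$ violates condition (1) of Definition~\ref{Poset Product}; the reverse implication between your two residual guards breaks down for the same reason. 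So the strictness either needs a proof for the particular $\varphi$ at hand or must be added as a hypothesis. In fairness, the paper's own proof makes exactly the same unjustified leap, asserting $\varphi(j)<\varphi(i)$ ``from the monotonicity of $\varphi$''; your write-up reproduces that argument, gap included, while making more visible where the load-bearing step sits.
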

\begin{proof}
In order to check that $\gamma$ is well defined, take $h\in \bigotimes_{k\in \varphi(F)}m(k)$ and consider $i\in F$ such that $\gamma(h)(i)\neq 0_{\varphi(i)}$. Assume $j<i$ in $F$. By the definition of $\gamma$, we get that $h(\varphi(i))\neq 0_{\varphi(i)}$. From the monotonicity of $\varphi$, it follows that $\varphi(j)<\varphi(i)$. Then by assumption, when we have that $h(\varphi(j))=1$, and consequently $\gamma(h)(i)=1$. By Definition \ref{Poset Product}, we have that $\gamma(h)\in  \bigotimes_{i\in \textbf{F}}(m\circ \varphi)(i)$. The proof of the fact that $\gamma$ is an homomorphism is straightforward.
\end{proof}
\noindent
Notice, in addition that the family $\mathcal{F}$ induces a map $\alpha: \bigotimes_{i\in \textbf{F}}(m\circ \varphi)(i) \rightarrow \bigotimes_{i\in \textbf{F}}l(i)$ defined as $\alpha(g)(i)=f_{i}(g(i))$, for every $i\in \textbf{F}$.

\begin{lem}\label{Second igredient}
The map $\alpha$, defined above, is a morphism of MTL-algebras.
\end{lem}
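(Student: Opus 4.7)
The plan is to verify first that $\alpha$ is well-defined as a map into $\bigotimes_{i\in\textbf{F}}l(i)$, and then check preservation of each MTL operation. Well-definedness amounts to checking condition (1) of Definition \ref{Poset Product} for $\alpha(g)$. Suppose $\alpha(g)(i)=f_{i}(g(i))\neq 0_{l(i)}$. Since each $f_{i}$, being an MTL-morphism, satisfies $f_{i}(0_{(m\circ\varphi)(i)})=0_{l(i)}$, this forces $g(i)\neq 0_{(m\circ\varphi)(i)}$. By the defining property of $g\in\bigotimes_{i\in\textbf{F}}(m\circ\varphi)(i)$, we obtain $g(j)=1$ for every $j<i$, and hence $\alpha(g)(j)=f_{j}(1)=1$, as required.

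For preservation of the monoid and lattice operations, the argument is immediate: these operations are computed pointwise both in the domain and codomain, so preservation reduces to the fact that each $f_{i}$ is a semihoop homomorphism. The same remark handles the constants $0$ and $1$, since each $f_{i}$ preserves them. The nontrivial step is preservation of the residual, because of the ``otherwise'' branch in Definition \ref{Poset Product}(3). Here I plan to exploit the hypothesis that each $f_{i}$ is \emph{injective}, part of the definition of morphism in $\fLF$: for any MTL-morphism $f$, $a\leq b$ implies $f(a)\leq f(b)$, and when $f$ is injective the converse also holds, as can be seen from $f(a)\leq f(b)\Leftrightarrow f(a\to b)=1=f(1)\Leftrightarrow a\to b=1$.

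With this equivalence in hand, fix $g,h$ and $i\in F$. The condition ``for all $j<i$, $g(j)\leq_{j}h(j)$'' is equivalent to ``for all $j<i$, $\alpha(g)(j)\leq_{j}\alpha(h)(j)$,'' so both $(g\to h)(i)$ and $(\alpha(g)\to\alpha(h))(i)$ land in the same branch of the case split. In the first branch, $\alpha(g\to h)(i)=f_{i}(g(i)\to_{i} h(i))=f_{i}(g(i))\to_{i} f_{i}(h(i))=(\alpha(g)\to\alpha(h))(i)$, using that $f_{i}$ preserves $\to$. In the second branch, both sides equal $0_{l(i)}$, using again that $f_{i}(0_{(m\circ\varphi)(i)})=0_{l(i)}$.

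The main conceptual subtlety, and the only place where a nontrivial hypothesis is used, is the residual computation: the ``otherwise'' clause in the forest-product residual makes the operation genuinely non-local, so one must know that the premise ``for all $j<i$, $g(j)\leq h(j)$'' is invariant under applying $\alpha$ componentwise. This is precisely what injectivity of the $f_{i}$ delivers, explaining why this condition is built into the definition of a morphism of labeled forests; everything else is a routine pointwise verification.
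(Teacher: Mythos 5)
Your proof is correct and follows essentially the same route as the paper: well-definedness is checked against the defining condition of the forest product (you verify clause (1) of Definition \ref{Poset Product} directly, whereas the paper verifies the equivalent clause of Lemma \ref{Equivalent forms of forest product}; the two are interchangeable), and the operations are then compared pointwise. The paper dismisses the homomorphism verification as ``straightforward,'' so your careful treatment of the residual --- in particular the observation that injectivity of the $f_{i}$, via Lemma \ref{Characterization of Monomorphisms}, is exactly what makes the premise of the case split in Definition \ref{Poset Product}(3) invariant under $\alpha$ --- supplies the one genuinely non-routine point that the paper leaves implicit.
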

\begin{proof}
Let $g\in \bigotimes_{i\in \textbf{F}}(m\circ \varphi)(i)$ be such that $\alpha(g)(i)\neq 1$. Let $j\in \textbf{F}$ be such that $i<j$. Since, $f_{i}(g(i))\neq 1$, we get that, by Lemma \ref{Characterization of Monomorphisms} $g(i)\neq 1$. Hence, by assumption $g(j)=0_{\varphi(j)}$. Thereby $\alpha(g)(j)=f_{j}(g(j))=0_{j}$, and by $(2)$ of Lemma \ref{Equivalent forms of forest product}, we have that $\alpha(g)\in \bigotimes_{i\in \textbf{F}}l(i)$. The proof of the fact that $\alpha$ is an homomorphism is straightforward.
\end{proof}
\noindent
The Lemmas \ref{First igredient} and \ref{Second igredient} allows us to consider the following composite of morphisms of MTL-algebras:

\begin{displaymath}
\xymatrix{
\Pprod_{m}(\textbf{G}) \ar[r]^-{\beta} & \ \bigotimes_{k\in \varphi(F)}m(k) \ar[r]^-{\gamma} &  \bigotimes_{i\in \textbf{F}}(m\circ \varphi)(i) \ar[r]^-{\alpha} & \Pprod_{l}(\textbf{F})
}
\end{displaymath}
\noindent
where $\Pprod_{m}(G)=\bigotimes_{k\in \textbf{G}}m(k)$, $\Pprod_{l}(\textbf{F})=\bigotimes_{i\in \textbf{F}}l(i)$ and $\beta: \Pprod_{m}(G) \rightarrow \bigotimes_{k\in \varphi(F)}m(k)$ is the restriction of $\Pprod_{m}(G)$ to $\varphi(\textbf{F})$.

\begin{theo}
	\label{functor from forests to fMTL}
 The assignments $l\mapsto \Pprod_{l}(F)$ and $(\varphi, \mathcal{F})\mapsto \alpha \gamma \beta$ define a contravariant functor \[\Vuelta : \fLF \rightarrow \fMTL.\]
\end{theo}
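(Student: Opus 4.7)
The plan is to verify that $\Vuelta$ satisfies the three items needed of a contravariant functor: well-definedness on objects and on morphisms, preservation of identities, and contravariant preservation of composition. On objects, $\Pprod_{l}(\textbf{F})=\bigotimes_{i\in\textbf{F}}l(i)$ is a subset of the finite cartesian product $\prod_{i\in F}l(i)$ and an MTL-algebra by Lemma \ref{forest product is a MTL-algebra}, hence it lies in $\fMTL$. On morphisms, the restriction $\beta$ is the action of the presheaf $\Pprod_{m}$ on the inclusion $\varphi(F)\subseteq G$ of downsets (note that $\varphi(F)\in\dec{\textbf{G}}$ because $\varphi$ is a p-morphism, hence open for the Alexandrov topologies), so it is an MTL-morphism; together with Lemmas \ref{First igredient} and \ref{Second igredient}, the composite $\alpha\gamma\beta$ is therefore a morphism of finite MTL-algebras. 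Identity preservation is immediate: if $(\varphi,\mathcal{F})=(\mathrm{id}_{F},\{\mathrm{id}_{l(i)}\}_{i\in F})$, each of $\beta$, $\gamma$, $\alpha$ reduces to the identity.

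The main step is contravariance. Given composable morphisms $(\varphi,\mathcal{F}):l\rightarrow m$ and $(\psi,\mathcal{G}):m\rightarrow n$ with composite $(\psi\varphi,\mathcal{M})$, where $\mathcal{M}=\{f_{i}g_{\varphi(i)}\}_{i\in F}$, I will verify
\[
\Vuelta(\psi\varphi,\mathcal{M})=\Vuelta(\varphi,\mathcal{F})\circ \Vuelta(\psi,\mathcal{G})
\]
by an elementwise trace. For $h\in\Pprod_{n}(\textbf{H})$, the left-hand side produces the function $i\mapsto f_{i}(g_{\varphi(i)}(h(\psi\varphi(i))))$ on $F$ directly from the definitions of $\beta_{\psi\varphi}$, $\gamma_{\psi\varphi}$, $\alpha_{\psi\varphi}$. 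For the right-hand side, applying $\alpha_{\psi}\gamma_{\psi}\beta_{\psi}$ to $h$ gives the function $j\mapsto g_{j}(h(\psi(j)))$ in $\Pprod_{m}(\textbf{G})$; then $\beta_{\varphi}$ restricts to $\varphi(F)$, $\gamma_{\varphi}$ reindexes along $\varphi$, and $\alpha_{\varphi}$ applies the family $\{f_{i}\}$ pointwise, yielding the same function $i\mapsto f_{i}(g_{\varphi(i)}(h(\psi\varphi(i))))$ on $F$.

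The main obstacle is purely bookkeeping: juggling three labeled forests, two p-morphisms, and three families of auxiliary MTL-morphisms $\alpha$, $\beta$, $\gamma$ at each of the two stages. The key set-theoretic identity behind the reconciliation of the two sides is $(\psi\varphi)(F)=\psi(\varphi(F))$, which makes the single-stage restriction to $(\psi\varphi)(F)\subseteq H$ on the left agree with the two successive restrictions (first from $H$ to $\psi(G)$, then, after reindexing and applying $\mathcal{G}$, from $G$ to $\varphi(F)$) on the right. Once these elementwise formulas are laid out, the equality is a one-line check, and $\Vuelta(\mathrm{id}_{l})=\mathrm{id}_{\Vuelta(l)}$ follows from the remark above.
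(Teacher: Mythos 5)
Your proposal is correct and follows essentially the same route as the paper: the heart of both arguments is the elementwise computation showing that $\Vuelta$ applied to the composite $(\psi\varphi,\{f_ig_{\varphi(i)}\})$ sends $h$ to $i\mapsto f_i\bigl(g_{\varphi(i)}(h(\psi(\varphi(i))))\bigr)$, which coincides with $\Vuelta(\varphi,\mathcal{F})\circ\Vuelta(\psi,\mathcal{G})$, together with the immediate check on identities. Your additional remarks on well-definedness (via Lemmas \ref{forest product is a MTL-algebra}, \ref{First igredient}, \ref{Second igredient} and the identity $(\psi\varphi)(F)=\psi(\varphi(F))$) only make explicit bookkeeping that the paper leaves implicit.
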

\begin{proof}
Let $l:\textbf{F}\rightarrow \Skel$, $m:\textbf{G}\rightarrow \Skel$ and $n:\textbf{H}\rightarrow \Skel$ be finite labeled forests, and $(\varphi, \mathcal{F}):l\rightarrow m$ and $(\psi, \mathcal{G}):l\rightarrow m$ be morphism of finite labeled forests. Let \[\mathcal{M}=\{f_{i}g_{\varphi(i)}:n(\psi(\varphi (i)))\rightarrow l(i) \mid i\in \textbf{F} \}.\]
\noindent
Consider $s\in \Vuelta(n)$ and $i\in \textbf{F}$. Then from

\begin{eqnarray*}
\nonumber \Vuelta[(\psi, \mathcal{G})(\varphi, \mathcal{F})](s)(i)=\Vuelta(\psi \varphi, \mathcal{M})(s)(i) \nonumber\\ =(f_{i}g_{\varphi(i)})[s(\psi(\varphi(i)))] \nonumber\\ =f_{i}[g_{\varphi(i)}(s(\psi(\varphi(i)))] \nonumber\\ =f_{i}[\Vuelta(\psi, \mathcal{G})(s)(\varphi(i))] \nonumber\\ =\Vuelta(\varphi, \mathcal{F})[\Vuelta(\psi, \mathcal{G})(s)](i) \nonumber\\ =[\Vuelta(\varphi, \mathcal{F})\Vuelta(\psi, \mathcal{G})](s)(i)
\end{eqnarray*}
\noindent
we conclude that $\Vuelta[(\psi, \mathcal{G})(\varphi, \mathcal{F})]=\Vuelta(\varphi, \mathcal{F})\Vuelta(\psi, \mathcal{G})$. Since $id_{l}=(id_{F}, \mathcal{I})$, where $\mathcal{I}$ is the family formed by the identities of $\{l(i)\}_{i\in \textbf{F}}$, it is clear from the definition of $\Vuelta$ that $\Vuelta(id_{l})=id_{\Vuelta(l)}$.
\end{proof}

\begin{lem}\label{characterizationidempotentsposetproduct}
Let $l:\textbf{F}\rightarrow \Skel$ be a finite labeled forest. Then $h\in \mathcal{I}(\Pprod_{l}(\textbf{F}))$ if and only if $C_{h}=\emptyset$.
\end{lem}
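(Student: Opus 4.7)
The plan is to reduce the idempotency condition on $h$ to a pointwise idempotency condition on each coordinate, and then invoke the structural fact that finite archimedean MTL-chains have no nontrivial idempotents.

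First, I would observe that by item (2) of Definition \ref{Poset Product}, the monoid operation of $\Pprod_{l}(\textbf{F})$ is defined pointwise. Consequently, $h^{2}=h$ holds in $\Pprod_{l}(\textbf{F})$ if and only if $h(i)^{2}=h(i)$ holds in $l(i)$ for every $i\in \textbf{F}$; equivalently, $h(i)\in \mathcal{I}(l(i))$ for every $i\in \textbf{F}$. This is just saying that idempotency can be checked coordinatewise, and no interaction between different coordinates enters (the residual plays no role here).

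Second, because $l$ is a labeled forest in the sense introduced in Section \ref{section Finite labeled forests}, each $l(i)$ belongs to $\Skel$, the skeleton of $\faMTLc$, hence $l(i)$ is a finite archimedean MTL-chain. By item (iii) of Corollary \ref{Equivalent forms for arquimedeanity}, a finite archimedean MTL-chain has no nontrivial idempotent elements, so $\mathcal{I}(l(i))\subseteq \{0_{i},1\}$ for every $i\in \textbf{F}$ (and in fact equality holds).

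Combining the two observations, $h\in \mathcal{I}(\Pprod_{l}(\textbf{F}))$ if and only if $h(i)\in \{0_{i},1\}$ for every $i\in \textbf{F}$, which is precisely the condition $C_{h}=\emptyset$ by the very definition of $C_{h}$. There is no real obstacle here: once one notices that multiplication is pointwise, the whole statement is an immediate consequence of the archimedean hypothesis imposed on the labels, so the proof is essentially a two-line unwinding of the relevant definitions.
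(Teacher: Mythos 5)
Your proof is correct and follows essentially the same route as the paper: reduce idempotency to the pointwise condition $h(i)\in\mathcal{I}(l(i))$ and then use the fact that finite archimedean MTL-chains have only the trivial idempotents $0_i$ and $1$ (the paper cites Proposition \ref{finite archimedean MTL-chains}, you cite the equivalent item (iii) of Corollary \ref{Equivalent forms for arquimedeanity}). No substantive difference.
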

\begin{proof}
Observe that $h\in \mathcal{I}(\Pprod_{l}(\textbf{F}))$ if and only if $h(i)^{2}=h(i)$, for every $i\in F$, which is equivalent to say that $h(i)\in \mathcal{I}(l(i))$. Since $l(i)$ is an arquimedean MTL-chain, by Proposition \ref{finite archimedean MTL-chains}, the only possible case is $h(i)=0_{i}$ or $h(i)=1$. This concludes the proof.
\end{proof}
\noindent
Let $\textbf{F}$ be a finite forest and $S\subseteq F$. We write $Max(S)$ for the maximal elements of $S$.

\begin{lem}\label{descriptionofXfilters}
Let $l:\textbf{F}\rightarrow \Skel$ be a finite labeled forest and $S\in \dec{F}$, then \[\filter{S}=\{h\in \Pprod_{l}(\textbf{F}) \mid h(i)=1,\; \text{for every}\; i\in Max(S)\}\]
\end{lem}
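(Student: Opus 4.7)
The plan is to prove the two inclusions of this set equality separately, with the forward direction being essentially immediate from the definition of $\filter{S}$ and the reverse direction relying on the downset structure of $h^{-1}(1)$.

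For the inclusion $\filter{S} \subseteq \{h\in \Pprod_{l}(\textbf{F}) \mid h(i)=1 \text{ for every } i\in Max(S)\}$: if $h \in \filter{S}$, then by definition $h|_{S} = 1$, and since $Max(S) \subseteq S$, it follows trivially that $h(i) = 1$ for every $i \in Max(S)$.

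For the reverse inclusion, suppose $h \in \Pprod_{l}(\textbf{F})$ satisfies $h(i) = 1$ for every $i \in Max(S)$. I need to show $h(j) = 1$ for an arbitrary $j \in S$. The key observation is that since $\textbf{F}$ is finite, so is $S$, and hence there exists some $i \in Max(S)$ with $j \leq i$ (every element of a finite poset sits below a maximal element of any downset containing it). By hypothesis, $h(i) = 1$. Now invoking item (4) of Lemma \ref{Equivalent forms of forest product}, we know that $h^{-1}(1)$ is a downset of $\textbf{F}$. Since $j \leq i$ and $i \in h^{-1}(1)$, it follows that $j \in h^{-1}(1)$, i.e., $h(j) = 1$. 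As $j \in S$ was arbitrary, $h|_{S} = 1$, and therefore $h \in \filter{S}$.

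There is no real obstacle here: the statement is a routine consequence of the combinatorial structure of elements of $\Pprod_{l}(\textbf{F})$ established in Lemma \ref{Equivalent forms of forest product}, together with the elementary fact that every element of a finite downset lies beneath one of its maximal elements.
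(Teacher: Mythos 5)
Your proof is correct and follows essentially the same route as the paper: the forward inclusion is trivial, and for the reverse one both arguments pick, for each $j\in S$, a maximal element $i\in Max(S)$ above it and conclude $h(j)=1$ from the structure of elements of the forest product (the paper invokes condition (1) of Definition \ref{Poset Product} via $h(i)\neq 0_i$, while you cite the equivalent ``$h^{-1}(1)$ is a downset'' clause of Lemma \ref{Equivalent forms of forest product}). No gap.
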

\begin{proof}
Let $h\in \Pprod_{l}(\textbf{F})$ and suppose $ h(i)=1$ for every $i\in Max(S)$. If $j\in S$, there exists some $i\in Max(S)$ such that $j\leq i$. Since $h(i)\neq 0_{i}$, $h(j)=1$ and $h\in \filter{S}$. The other inclusion is straightforward.
\end{proof}
\noindent
Recall that, from Corollary \ref{idempotentdeterminesfilters} there exist a unique $h_{S}\in \mathcal{I}(\Pprod_{l}(\textbf{F}))$ such that $\filter{S}=\uparrow h_{S}$. As a direct consequence of Lemmas \ref{characterizationidempotentsposetproduct} and \ref{descriptionofXfilters} we obtain the following result.

\begin{lem}\label{descriptionofidempotentXfilter}
Let $l:\textbf{F}\rightarrow \Skel$ be a finite labeled forest and $S\in \dec{F}$, then \begin{displaymath}
h_{S}(j)= \left\{ \begin{array}{ll}
             1, &  j\leq i, \; \text{for some}\; i\in Max(S)\\
             0_{j}, &  otherwise
             \end{array}
   \right.
\end{displaymath}
\noindent
\end{lem}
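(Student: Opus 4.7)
The plan is to recognize that the statement is really a matter of identifying, by its explicit formula, the idempotent $h_S$ whose existence and uniqueness are guaranteed by Corollary \ref{idempotentdeterminesfilters}. So I would define a candidate function $\tilde h_S : F \to \bigcup_{j \in F} l(j)$ by the piecewise formula in the statement and then verify three things: (a) $\tilde h_S \in \Pprod_l(\textbf{F})$, (b) $\tilde h_S \in \mathcal{I}(\Pprod_l(\textbf{F}))$, and (c) $\uparrow \tilde h_S = \filter{S}$. The uniqueness provided by Corollary \ref{idempotentdeterminesfilters} then forces $h_S = \tilde h_S$, which is the formula displayed.

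For (a), I would use Definition \ref{Poset Product}(1): if $\tilde h_S(i) \neq 0_i$ then $\tilde h_S(i)=1$, so by construction there is some $i'\in Max(S)$ with $i\leq i'$; for any $j<i$ one has $j<i\leq i'$, hence $\tilde h_S(j)=1$, as required. For (b), the function $\tilde h_S$ takes only the values $0_j$ and $1$ at each coordinate, so $C_{\tilde h_S}=\emptyset$, and Lemma \ref{characterizationidempotentsposetproduct} gives idempotency directly.

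For (c), the inclusion $\uparrow \tilde h_S \subseteq \filter{S}$ is immediate: if $h\geq \tilde h_S$, then for each $i\in Max(S)$, $h(i)\geq \tilde h_S(i)=1$, and Lemma \ref{descriptionofXfilters} puts $h$ in $\filter{S}$. For the converse inclusion, take $h\in \filter{S}$, so $h(i)=1$ for every $i\in Max(S)$ by Lemma \ref{descriptionofXfilters}. To see $h\geq \tilde h_S$ pointwise, the only case to check is when $\tilde h_S(j)=1$; then by definition $j\leq i$ for some $i\in Max(S)$, and since $S$ is a downset, $j\in S$. The crucial observation here is that $h^{-1}(1)$ is a downset of $\textbf{F}$ by item (4) of Lemma \ref{Equivalent forms of forest product}, so from $h(i)=1$ and $j\leq i$ one concludes $h(j)=1 = \tilde h_S(j)$.

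I do not expect any real obstacle; this is essentially bookkeeping. The only mildly delicate step is the last one, where one must invoke the downset property of $h^{-1}(1)$ (from Lemma \ref{Equivalent forms of forest product}) to propagate the value $1$ from the maximal elements of $S$ to all of $S$. Once this is in place, uniqueness from Corollary \ref{idempotentdeterminesfilters} closes the argument.
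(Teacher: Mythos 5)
Your proof is correct and follows exactly the route the paper intends: the paper states the lemma as a direct consequence of Lemma \ref{characterizationidempotentsposetproduct} and Lemma \ref{descriptionofXfilters} without further detail, and your write-up is precisely the natural elaboration of that claim (candidate function, membership in the forest product, idempotency via $C_{h}=\emptyset$, the two inclusions for $\uparrow \tilde h_{S}=\filter{S}$, and uniqueness from Corollary \ref{idempotentdeterminesfilters}). The only remark worth making is that in the final step, having already observed $j\in S$, you could conclude $h(j)=1$ directly from $h|_{S}=1$ without invoking the downset property of $h^{-1}(1)$; both are valid.
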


\begin{coro}\label{descriptionofprincipalXfilters}
Let $l:\textbf{F}\rightarrow \Skel$ be a finite labeled forest. The following holds for every $i\in F$:
\begin{itemize}
\item[1.] $\filter{\downarrow i}$ is a prime filter of $\Pprod_{l}(\textbf{F})$,
\item[2.] $\filter{\downarrow i}=\{h\in \Pprod_{l}(\textbf{F}) \mid h(i)=1\}$,
\item[3.] The map \begin{displaymath}
h_{\downarrow i}(j)= \left\{ \begin{array}{ll}
             1, &  j\leq i\\
             0_{j}, &  otherwise
             \end{array}
   \right.
\end{displaymath} is a non zero join irreducible element of $\Pprod_{l}(\textbf{F})$.
\end{itemize}
\end{coro}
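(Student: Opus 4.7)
The plan is to handle the three items in order, using the characterization of prime filters via totally ordered downsets and the bijection between idempotents and filters established earlier.

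For (1), I would invoke Lemma \ref{Chains determines prime filters of poset products}, which asserts that $\filter{S}$ is prime exactly when $S$ is totally ordered. Since $\textbf{F}$ is a forest, the principal downset $\downarrow i$ is by definition a chain, so $\filter{\downarrow i}$ is prime. One does need to also check that $\filter{\downarrow i}$ is a proper filter, i.e., that $\downarrow i$ is a \emph{proper} downset; this holds because the element of $\Pprod_{l}(\textbf{F})$ that is identically $0$ (using the convention $0$ on elements above any fixed point) belongs to the forest product but not to $\filter{\downarrow i}$, so $0 \notin \filter{\downarrow i}$.

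For (2), I would apply Lemma \ref{descriptionofXfilters} with $S = \downarrow i$. Since $i$ is the unique maximum of $\downarrow i$ in the forest $\textbf{F}$, we have $Max(\downarrow i)=\{i\}$, and the general description collapses to the condition $h(i)=1$.

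For (3), the description of $h_{\downarrow i}$ is immediate from Lemma \ref{descriptionofidempotentXfilter} applied to $S=\downarrow i$, using again $Max(\downarrow i)=\{i\}$. To see that this idempotent is join irreducible and nonzero, I would argue as follows. It is nonzero because $h_{\downarrow i}(i)=1\neq 0_i$, so $h_{\downarrow i}$ is not the bottom element of $\Pprod_{l}(\textbf{F})$. For join irreducibility, I would appeal to Corollary \ref{joinirreducibledeterminesprimefilters}: in the finite MTL-algebra $\Pprod_{l}(\textbf{F})$ (which is finite because $\textbf{F}$ is finite and each $l(j)$ is finite), every prime filter is of the form $\uparrow e$ for a unique $e \in \nzid$. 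Since $\filter{\downarrow i}=\uparrow h_{\downarrow i}$ is prime by item (1), and this filter is principal with generator the idempotent $h_{\downarrow i}$, uniqueness forces $h_{\downarrow i}\in \JI{\Pprod_{l}(\textbf{F})}$.

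No step is really a serious obstacle here; the main subtlety is just recognizing that the three items are direct consequences of the machinery already assembled (Lemmas \ref{Chains determines prime filters of poset products}, \ref{descriptionofXfilters}, \ref{descriptionofidempotentXfilter} and Corollary \ref{joinirreducibledeterminesprimefilters}), combined with the observation that $\downarrow i$ is a chain with unique maximum $i$ in the forest $\textbf{F}$.
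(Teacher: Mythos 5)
Your proof is correct and follows essentially the same route as the paper, whose own proof simply cites Lemmas \ref{Chains determines prime filters of poset products}, \ref{descriptionofXfilters}, \ref{descriptionofidempotentXfilter} and Corollary \ref{joinirreducibledeterminesprimefilters}; you merely make the details explicit (that $\downarrow i$ is a chain with $Max(\downarrow i)=\{i\}$, and that uniqueness of the idempotent generator of the prime filter forces join irreducibility). No discrepancies to report.
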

\begin{proof}
A direct consequence of Lemmas \ref{Chains determines prime filters of poset products}, \ref{descriptionofXfilters}, \ref{descriptionofidempotentXfilter} and Corollary \ref{joinirreducibledeterminesprimefilters}.
\end{proof}

\begin{lem}\label{descriptionofjoinirreducibles}
Let $l:\textbf{F}\rightarrow \Skel$ be a finite labeled forest and $h\in \mathcal{I}(\Pprod_{l}(\textbf{F}))$, then $h\in \JI{\Pprod_{l}(\textbf{F})}$ if and only if $h^{-1}(1)$ is a chain.
\end{lem}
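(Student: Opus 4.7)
The plan is to use the fact that idempotents of $\Pprod_l(\textbf{F})$ correspond to downsets of $\textbf{F}$. By Lemma \ref{characterizationidempotentsposetproduct}, any $h \in \mathcal{I}(\Pprod_l(\textbf{F}))$ only takes values in $\{0_i, 1\}$, and by Definition \ref{Poset Product}(1), the set $S := h^{-1}(1)$ is a downset of $\textbf{F}$. Conversely, Lemma \ref{descriptionofidempotentXfilter} shows that each downset $S$ gives back $h$ via $h = h_S$. Moreover, since joins in $\Pprod_l(\textbf{F})$ are computed pointwise and the values are in $\{0, 1\}$, one immediately checks that $h_{S} \vee h_{T} = h_{S \cup T}$ for downsets $S, T$.

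For the forward direction, I would argue by contraposition. Assume $S = h^{-1}(1)$ is not a chain. Since every element of a downset in a forest lies below some element of $Max(S)$, we have $S = \bigcup_{m \in Max(S)} \downarrow m$, and $Max(S)$ is always an antichain; if $S$ were a chain, then $|Max(S)| = 1$ and $S = \downarrow m$. Hence there must exist two distinct maximal elements $m_1, m_2 \in Max(S)$. Because $m_i$ is maximal in $S$, the set $S_i := S \setminus \{m_i\}$ is still a downset of $\textbf{F}$, and both $S_1, S_2 \subsetneq S$, with $S_1 \cup S_2 = S$ since $m_2 \in S_1$ and $m_1 \in S_2$. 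Then
\[
h = h_S = h_{S_1 \cup S_2} = h_{S_1} \vee h_{S_2},
\]
with $h_{S_1}, h_{S_2} < h$, so $h$ is not join irreducible.

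For the converse, suppose $S = h^{-1}(1)$ is a chain. Since $h \in \JI{\Pprod_l(\textbf{F})}$ is taken to be nonzero, $S$ is nonempty, and being a chain downset of a forest, $S = \downarrow m$ where $m$ is its unique maximal element. Thus $h = h_{\downarrow m}$, which is join irreducible by Corollary \ref{descriptionofprincipalXfilters}(3).

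I do not anticipate a serious obstacle: the main subtlety is just recognising that, in a forest, a downset is a chain precisely when it has a unique maximal element, so that the failure of the chain condition forces the decomposition $S = (S \setminus \{m_1\}) \cup (S \setminus \{m_2\})$ into strictly smaller downsets, which is exactly what breaks join irreducibility.
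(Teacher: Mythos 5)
Your proof is correct, and its main combinatorial content coincides with the paper's: to show that a non-chain support forces reducibility, both arguments split $h^{-1}(1)$ at two incomparable points and write $h$ as a join of two strictly smaller idempotents. Your version is in fact slightly more careful here: by insisting that $m_1,m_2$ be \emph{maximal} in $S$ you guarantee that $S\setminus\{m_i\}$ is again a downset, so that $h_{S_1}$ and $h_{S_2}$ are genuine elements of $\bigotimes_{i\in\textbf{F}}l(i)$; the paper zeroes out an arbitrary pair of incomparable points with $h(i)=h(j)=1$, and the resulting functions need not satisfy condition (1) of Definition \ref{Poset Product} unless those points are maximal in $h^{-1}(1)$, so your choice quietly repairs that. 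Where you genuinely diverge is the converse: the paper argues directly that $h_{\downarrow i}$ is join irreducible by a pointwise computation using that each fibre $l(k)$ is really local (Lemma \ref{really local and prelinear are chains}), whereas you simply invoke Corollary \ref{descriptionofprincipalXfilters}(3), which derives the same fact from the prime-filter correspondence ($\filter{\downarrow i}$ prime by Lemma \ref{Chains determines prime filters of poset products}, then Corollary \ref{joinirreducibledeterminesprimefilters}); since that corollary precedes the lemma and does not depend on it, there is no circularity, and your route is shorter at the cost of leaning on the spectral machinery. One shared loose end: $S=\emptyset$ is vacuously a chain while $h=0$ is not counted as join irreducible, so strictly speaking both proofs (and the statement) implicitly restrict to nonzero $h$; your parenthetical remark about $h$ being ``taken to be nonzero'' is phrased as if you already knew $h\in\JI{\Pprod_{l}(\textbf{F})}$ in the direction where that is the conclusion, so it would be cleaner to state the nonzero hypothesis up front.
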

\begin{proof}

Let us assume that $h^{-1}(1)$ is a chain, then, since $F$ is finite, $h^{-1}(1)=\downarrow i$ for some $i\in F$. Suppose that there are $g,f\in \mathcal{I}(\Pprod_{l}(\textbf{F}))$ such that $h=g\vee f$, then $h(k)=g(k)\vee f(k)$, for every $k\in F$. If $k\leq i$, we get that $g(k)\vee f(k)=1$. Since $l(k)$ is really local, by Lemma \ref{really local and prelinear are chains},  $g(k)=1$ or $f(k)=1$. Consequently, $g(k)=h(k)$ or $f(k)=h(k)$. If $h(k)=0_{k}$, the result follows, since $0_{k}$ is join irreducible in $l(k)$. Hence $h$ is join irreducible. On the other hand, suppose that $h\in \JI{\Pprod_{l}(\textbf{F})}$. If $h^{-1}(1)$ is not a chain, there exist $i,j\in F$ not comparables such that $h(i)=h(j)=1$. Let us consider the following functions:
\begin{center}
\begin{tabular}{ccc}
$g(k)=\left\{ \begin{array}{lc}
             h(k), &  \text{if}\; k\neq i  \\
             \\ 0_{i},  &  \text{otherwise} \\
             \end{array}
   \right.$ &  & $f(k)=\left\{ \begin{array}{lc}
             h(k), &  \text{if}\; k\neq j  \\
             \\ 0_{j},  & \text{otherwise} \\
             \end{array}
   \right.$
\end{tabular}
\end{center}
\noindent
From Lemma \ref{characterizationidempotentsposetproduct}, it follows that $g,f\in \mathcal{I}(\Pprod_{l}(\textbf{F}))$. Thereby, $h=g\vee f$, which is in contradiction with the assumption.

\end{proof}

\begin{lem}\label{forest is isomorphic to join irreducibles forest product}
Let $l:\textbf{F}\rightarrow \Skel$ be a finite labeled forest. There is a poset isomorphism between $\textbf{F}$ and $\JI{\Pprod_{l}(\textbf{F})}$.
\end{lem}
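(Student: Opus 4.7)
The plan is to define the map $\Phi: \textbf{F} \to \JI{\Pprod_{l}(\textbf{F})}$ by $\Phi(i) = h_{\downarrow i}$, using the element produced in item (3) of Corollary \ref{descriptionofprincipalXfilters}, and then show that $\Phi$ is a bijection that is monotone in both directions. Well-definedness of $\Phi$ is immediate from that corollary, since $h_{\downarrow i}$ is a nonzero join irreducible idempotent of $\Pprod_{l}(\textbf{F})$.

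Injectivity of $\Phi$ follows by reading off the level sets: if $h_{\downarrow i} = h_{\downarrow j}$, then $(h_{\downarrow i})^{-1}(1) = \ \downarrow i$ and $(h_{\downarrow j})^{-1}(1) = \ \downarrow j$ must coincide, which in a poset forces $i=j$. For monotonicity, suppose $i \leq j$; then $\downarrow i \subseteq \ \downarrow j$, and a direct inspection using Lemma \ref{descriptionofidempotentXfilter} gives $h_{\downarrow i}(k) \leq h_{\downarrow j}(k)$ for every $k \in F$ (either both are $1$, or both are $0_k$, or $h_{\downarrow i}(k)=0_k$ and $h_{\downarrow j}(k)=1$). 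For the converse, if $h_{\downarrow i} \leq h_{\downarrow j}$, evaluating at $k=i$ yields $1 = h_{\downarrow i}(i) \leq h_{\downarrow j}(i)$; since each label $l(i)$ is archimedean and $h_{\downarrow j}(i) \in \{0_i, 1\}$ by Lemma \ref{characterizationidempotentsposetproduct}, we conclude $h_{\downarrow j}(i) = 1$, i.e.\ $i \leq j$.

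The main obstacle is surjectivity of $\Phi$, which is where the structural lemmas of the section come into play. Given $h \in \JI{\Pprod_{l}(\textbf{F})}$, Lemma \ref{characterizationidempotentsposetproduct} says $h$ takes only the values $0_k$ and $1$, and Lemma \ref{descriptionofjoinirreducibles} says that $S := h^{-1}(1)$ is a chain in $F$. The forest product condition from Definition \ref{Poset Product} forces $S$ to be a downset: if $h(k) = 1$ and $k' < k$, then $h(k) \neq 0_k$ implies $h(k') = 1$. Since $h \neq 0$ (join irreducibles are nonzero) and $F$ is finite, the nonempty chain $S$ has a maximum element $i$, and being a downset it equals $\downarrow i$. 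Comparing with Lemma \ref{descriptionofidempotentXfilter} (applied to $S = \downarrow i$, whose unique maximal element is $i$), we obtain $h = h_{\downarrow i} = \Phi(i)$, establishing surjectivity.

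Combining injectivity, surjectivity, and the monotonicity of $\Phi$ and $\Phi^{-1}$ proved above, $\Phi$ is the desired poset isomorphism between $\textbf{F}$ and $\JI{\Pprod_{l}(\textbf{F})}$.
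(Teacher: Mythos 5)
Your proposal is correct and follows essentially the same route as the paper: the paper defines the inverse map $\mu:\JI{\Pprod_{l}(\textbf{F})}\rightarrow F$, $\mu(h)=\max h^{-1}(1)$, and proves bijectivity and two-way monotonicity using the same ingredients (Lemmas \ref{characterizationidempotentsposetproduct} and \ref{descriptionofjoinirreducibles} and Corollary \ref{descriptionofprincipalXfilters}). Your surjectivity argument is in fact a bit more explicit than the paper's, which absorbs the identification $h^{-1}(1)=\downarrow i$ into the proof of Lemma \ref{descriptionofjoinirreducibles}.
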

\begin{proof}
Let us to consider $\mu :\JI{\Pprod_{l}(\textbf{F})} \rightarrow F$, defined as $\mu(h)=max\; h^{-1}(1)=i_{h}$. From Lemma \ref{descriptionofjoinirreducibles}, it follows that $\mu$ is well defined and is injective. To verify that $\mu$ is surjective, take $i\in F$ and define \begin{displaymath}
h_{i}(j)= \left\{ \begin{array}{ll}
             1, &  j\leq i\\
             0_{j}, &  otherwise.
             \end{array}
   \right.
\end{displaymath}
\noindent
From Lemma \ref{descriptionofjoinirreducibles}, it follows that $h_{i}\in \JI{\Pprod_{l}(\textbf{F})}$. It is clear that $\mu (h_{i})=i$. In order to check the monotonicity of $\mu$, let us suppose that $h\leq g$, for $h,g\in \JI{\Pprod_{l}(\textbf{F})}$. From Corollary \ref{descriptionofprincipalXfilters}.3, we have that $h^{-1}(1)\subseteq g^{-1}(1)$, so $i_{h}\leq i_{g}$ and consequently $\mu (h)\leq \mu(g)$. The monotonicity of $\mu^{-1}$ is straightforward.
\end{proof}

\noindent
Let $l:\textbf{F}\rightarrow \Skel$ be a finite labeled forest. For every $i\in F$ we write $h_{i}$ for the map of $3.$ in Corollary \ref{descriptionofprincipalXfilters}.3. Let us to consider the assignment $\varphi_{l}:F\rightarrow \JI{\Pprod_{l}(\textbf{F})}$, defined as $\varphi_{l}(i)=h_{i}$.

\begin{lem}\label{pmorphismofcounit}
The assignment $\varphi_{l}$ is a p-morphism.
\end{lem}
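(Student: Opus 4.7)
My plan is to exploit the fact that $\varphi_l$ is essentially the inverse of the map $\mu$ from Lemma \ref{forest is isomorphic to join irreducibles forest product}. Indeed, in that proof one verifies $\mu(h_i) = i$, so $\varphi_l = \mu^{-1}$, which is already shown to be monotone. All that remains is to establish the p-morphism condition: given $i \in F$ and $g \in \JI{\Pprod_l(\textbf{F})}$ with $g \leq \varphi_l(i) = h_i$, I must produce some $j \in F$ with $j \leq i$ and $\varphi_l(j) = g$.

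The natural candidate is $j := \max g^{-1}(1)$. To see this is well defined, recall that $g \in \JI{\Pprod_l(\textbf{F})}$ lies inside $\mathcal{I}(\Pprod_l(\textbf{F}))$, so by Lemma \ref{descriptionofjoinirreducibles} the set $g^{-1}(1)$ is a chain, and since $F$ is finite this chain has a maximum. To control the position of $j$, I would use Corollary \ref{descriptionofprincipalXfilters}.3, which gives $h_i^{-1}(1) = \downarrow i$; combined with the pointwise order, $g \leq h_i$ forces $g^{-1}(1) \subseteq \downarrow i$, and in particular $j \leq i$.

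It then suffices to check that $g = h_j$. First, by Lemma \ref{characterizationidempotentsposetproduct} applied to the idempotent $g$, we have $g(k) \in \{0_k, 1\}$ for every $k \in F$. Second, by item (4) of Lemma \ref{Equivalent forms of forest product}, $g^{-1}(1)$ is a downset of $\textbf{F}$; together with being a chain with maximum $j$, this forces $g^{-1}(1) = \downarrow j$. Consequently $g(k) = 1$ if $k \leq j$ and $g(k) = 0_k$ otherwise, which is exactly the defining formula for $h_j$. Hence $\varphi_l(j) = h_j = g$.

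The argument is essentially a bookkeeping exercise; the main subtlety, and the only place where one must be careful, is to combine the chain condition from Lemma \ref{descriptionofjoinirreducibles} with the downset property from Lemma \ref{Equivalent forms of forest product} in order to conclude that $g^{-1}(1)$ is precisely $\downarrow j$ (rather than merely a chain contained in $\downarrow i$). Once that identification is made, the equality $g = h_j$ is forced pointwise by the two-valued nature of idempotents guaranteed by Lemma \ref{characterizationidempotentsposetproduct}.
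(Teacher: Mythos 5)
Your proof is correct, and it is actually more complete than the argument the paper itself gives. After monotonicity, the paper's proof takes $g\leq h_{i}$, asserts that $g(i)=1$, and then uses Corollary \ref{descriptionofprincipalXfilters}.2 to get $g\in \filter{\downarrow i}=\uparrow h_{i}$, concluding $g=h_{i}$; but the step ``$g\leq h_{i}$ implies $g(i)=1$'' is unjustified and only covers the degenerate case where the element to be lifted is $h_{i}$ itself (already in $F=\{a<b\}$ one has $h_{a}\leq h_{b}$ with $h_{a}(b)=0_{b}$, so the lift must be $a$, not $b$). Your argument supplies exactly the missing general case: you take $j=\max g^{-1}(1)$, justify that this maximum exists via the chain condition of Lemma \ref{descriptionofjoinirreducibles} and finiteness, place $j\leq i$ using $h_{i}^{-1}(1)=\downarrow i$ and the pointwise order, and then identify $g=h_{j}$ by combining the two-valuedness of idempotents (Lemma \ref{characterizationidempotentsposetproduct}) with the downset property of $g^{-1}(1)$ from Lemma \ref{Equivalent forms of forest product}(4), which together force $g^{-1}(1)=\downarrow j$. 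The one point you should state explicitly is that $g^{-1}(1)\neq\emptyset$, so that the maximum is defined; this holds because $g$ is a \emph{nonzero} join irreducible, hence not the constant function $k\mapsto 0_{k}$. With that remark added, your proof is a correct and self-contained repair of the lemma.
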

\begin{proof}
The monotonicity of $\varphi_{l}$ follows from Corollary \ref{descriptionofprincipalXfilters}.3. On the other hand, take $i\in F$ and suppose that $g\leq h_{i}$. Thus, $h(i)=1$ implies that $g(i)=1$ and, due to Corollary \ref{descriptionofprincipalXfilters}.2 we get that $g\in X_{\downarrow i}$. Therefore, $h_{i}\leq g$. In consequence, $g=\varphi_{l}(i)$, which was our aim.
\end{proof}

\begin{lem}\label{pmorphismofcounit is an iso}
The p-morphism $\varphi_{l}$ is an isomorphism.
\end{lem}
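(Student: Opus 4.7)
The plan is to piggy-back on Lemma \ref{forest is isomorphic to join irreducibles forest product}, which already provides the map $\mu : \JI{\Pprod_{l}(\textbf{F})} \to F$, $\mu(h) = \max h^{-1}(1)$, verifies it is a bijection, and records the monotonicity of both $\mu$ and $\mu^{-1}$. So most of the substantive work is already done; the present statement is essentially a packaging claim.

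First I would check that $\mu$ and $\varphi_l$ are mutually inverse. Reading off the explicit description of $h_i$ from Corollary \ref{descriptionofprincipalXfilters}.3, one has $(h_i)^{-1}(1) = \downarrow i$, hence $\mu(\varphi_l(i)) = \max(\downarrow i) = i$; and the surjectivity argument inside the proof of Lemma \ref{forest is isomorphic to join irreducibles forest product} exhibits exactly $h_i$ as the preimage of $i$ under $\mu$, giving $\varphi_l \circ \mu = \mathrm{id}$ on $\JI{\Pprod_{l}(\textbf{F})}$. This shows that $\varphi_l$ is bijective with inverse $\mu$.

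Combining this with Lemma \ref{pmorphismofcounit}, the map $\varphi_l$ is a bijective $p$-morphism whose set-theoretic inverse $\mu$ is monotone by Lemma \ref{forest is isomorphic to join irreducibles forest product}. To finish I would verify that this monotone inverse is itself a $p$-morphism: given $y \leq \mu(h)$ in $F$, take $z = \varphi_l(y)$; then $z = \varphi_l(y) \leq \varphi_l(\mu(h)) = h$ by monotonicity of $\varphi_l$, and $\mu(z) = y$ by construction. Hence $\mu$ is a $p$-morphism as well, and $\varphi_l$ is an isomorphism in the category of forests and $p$-morphisms.

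There is no real obstacle here, since every nontrivial ingredient has been proved already; the only thing to be careful about is keeping the direction of the bijection straight and confirming that isomorphism in the category of forests (with $p$-morphisms as arrows) reduces, for bijective monotone maps whose inverse is monotone, to the easy verification above. Once that is noted, the proof is a one-line assembly of Lemmas \ref{forest is isomorphic to join irreducibles forest product} and \ref{pmorphismofcounit}.
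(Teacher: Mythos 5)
Your proposal is correct. It differs from the paper's proof mainly in organization: the paper re-establishes bijectivity of $\varphi_{l}$ from scratch (injectivity by evaluating $h_{i}=h_{j}$ at $i$ and $j$, surjectivity from Lemma \ref{descriptionofjoinirreducibles}) and then checks that $\varphi_{l}^{-1}$ is a p-morphism by explicitly writing down the witness $g$ with $g(k)=1$ for $k\leq j$ and $g(k)=0_{k}$ otherwise, whereas you obtain bijectivity for free by identifying $\varphi_{l}$ as the inverse of the $\mu$ already constructed in Lemma \ref{forest is isomorphic to join irreducibles forest product}, and then verify the back-condition for the inverse abstractly, taking $z=\varphi_{l}(y)$ and using monotonicity of $\varphi_{l}$ to get $z\leq h$. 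The two verifications of the p-morphism condition are really the same construction (your $z=\varphi_{l}(y)=h_{y}$ is exactly the paper's $g$), but your phrasing via monotonicity avoids rechecking $g\leq h$ pointwise. What your route buys is economy — no duplicated bijectivity argument; what the paper's buys is self-containedness of this lemma. One small point worth making explicit if you write this up: $\varphi_{l}\circ\mu=\mathrm{id}$ on $\JI{\Pprod_{l}(\textbf{F})}$ needs the injectivity of $\mu$ (or the observation that an idempotent join-irreducible $h$ satisfies $h=h_{i_{h}}$ because $C_{h}=\emptyset$ and $h^{-1}(1)=\downarrow i_{h}$), not just the surjectivity computation $\mu(h_{i})=i$; this is recorded in Lemma \ref{forest is isomorphic to join irreducibles forest product}, so the gap is only expository.
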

\begin{proof}
We first prove the injectivity of $\varphi_{l}$. Let $i,j\in F$ be such that $\varphi_{l}(i)=\varphi_{l}(j)$. Then $h_{i}=h_{j}$. In particular $h_{i}(i)=h_{j}(i)$ and $h_{j}(j)=h_{i}(j)$, which, by definition of $h_{i}$ and $h_{j}$ means that $i\leq j$ and $j\leq i$. The surjectivity of $\varphi_{l}$ follows from Lemma \ref{descriptionofjoinirreducibles}. Finally, we verify that $\varphi_{l}^{-1}$ is also a p-morphism. To do so, notice that for every $h\in \Pprod_{l}(\textbf{F})$, $\varphi_{l}^{-1}(h)$ is just the $i\in F$ described in Lemma \ref{descriptionofjoinirreducibles}. We will denote such element as $i_{h}$. Let us suppose that $j\leq \varphi_{l}^{-1}(h)$, then $h(j)=h(i_{h})=1$ so, for every $k\leq j\leq i_{h}$ we get that $h(k)=h(j)=1$. Take
\begin{displaymath}
g(k)= \left\{ \begin{array}{ll}
             1, &  k\leq j\\
             0_{k}, &  otherwise
             \end{array}
   \right.
\end{displaymath}
It is clear that $g\in \Pprod_{l}(\textbf{F})$, $g\leq h$ and $\varphi_{l}^{-1}(j)=g$.
\end{proof}
\noindent
Observe that, from Lemmas \ref{uniqejoinirreducible} and \ref{pmorphismofcounit} we have that for every $i\in F$, there exists a unique $a_{\varphi_{l}(i)}\in \JI{\Pprod_{l}(\textbf{F})}$ such that $a_{\varphi_{l}(i)}\prec \varphi_{l}(i)$. Due to Lemma \ref{labelingfMTL}, $\uparrow a_{\varphi_{l}(i)}/\uparrow \varphi_{l}(i)$ is an archimedian MTL-chain. Let us consider the assignment $\tau_{i}:\uparrow a_{\varphi_{l}(i)}\rightarrow l(i)$, defined as $\tau_{i}(h)=h(i)$. It is clear, from the definition, that $\tau_{i}$ preserves all the binary monoid operations. Moreover, it preserves the residual. If $ a_{\varphi_{l}(i)} \leq f,g$, then for every $j<i$, $f(j)=g(j)=1$ so $f(j)\leq g(j)$, which means that $(f\rightarrow g)(i)=f(i)\rightarrow_{i}g(i)$, and consequently, $\tau_{i}(f\rightarrow g)=\tau_{i}(f)\rightarrow_{i}\tau_{i}(g)$. We have proved the following result:

\begin{lem}\label{family of morphisms}
The function $\tau_{i}$, defined above, is a morphism of MTL-algebras.
\end{lem}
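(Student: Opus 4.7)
The plan is to complete the verification of the MTL-algebra morphism axioms for $\tau_i$ by filling in what is not already covered in the text preceding the statement. Well-definedness is immediate: for $h \in \uparrow a_{\varphi_{l}(i)} \subseteq \Pprod_{l}(\textbf{F})$, by Definition \ref{Poset Product} one has $h(i) \in l(i)$. For each of the binary operations $\cdot$, $\wedge$ and $\vee$, the argument is that they are defined pointwise in the forest product, so preservation is the routine identity $\tau_{i}(h \circ g) = (h \circ g)(i) = h(i) \circ_{i} g(i) = \tau_{i}(h) \circ_{i} \tau_{i}(g)$. The residual is the only operation whose preservation requires a nontrivial argument, and this has already been carried out above the statement: if $f, g \geq a_{\varphi_{l}(i)}$, then $f(j) = g(j) = 1$ for all $j < i$, so the conditional clause in the definition of $\to$ in $\Pprod_{l}(\textbf{F})$ never triggers its ``otherwise'' branch and $(f \to g)(i) = f(i) \to_{i} g(i)$.

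The remaining task, and the step I expect to be the main obstacle, is preservation of the constants, in particular of $0$. Preservation of $1$ is trivial: the top of $\uparrow a_{\varphi_{l}(i)}$ is the constant function equal to $1$ in every coordinate, whose value at $i$ is the top of $l(i)$. For $0$, by Remark \ref{filtersareMTL} the bottom of $\uparrow a_{\varphi_{l}(i)}$ viewed as an MTL-algebra in its own right is $a_{\varphi_{l}(i)}$ itself, so the problem reduces to proving $a_{\varphi_{l}(i)}(i) = 0_{i}$. I would split into two cases according to Lemma \ref{uniqejoinirreducible}. If $i$ is minimal in $\textbf{F}$, then via the isomorphism $\varphi_{l}$ of Lemma \ref{pmorphismofcounit is an iso}, $\varphi_{l}(i)$ is minimal in $\JI{\Pprod_{l}(\textbf{F})}$, which forces $a_{\varphi_{l}(i)} = 0$ in $\Pprod_{l}(\textbf{F})$, whose value at $i$ is $0_{i}$. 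Otherwise, $i$ has a unique predecessor $i'$ in $\textbf{F}$; then $\varphi_{l}$ carries $i'$ to the unique join-irreducible immediately below $\varphi_{l}(i)$, so $a_{\varphi_{l}(i)} = h_{i'}$, and Corollary \ref{descriptionofprincipalXfilters}.3 gives $h_{i'}(i) = 0_{i}$ because $i \not\leq i'$. This closes the last gap and shows that $\tau_{i}$ is a morphism of MTL-algebras.
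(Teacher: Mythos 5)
Your proof is correct and follows essentially the same route as the paper: the binary operations are pointwise, and the residual computation reduces to observing that $f(j)=g(j)=1$ for all $j<i$ whenever $f,g\geq a_{\varphi_{l}(i)}$, which is exactly the argument the paper gives in the text preceding the lemma. Your additional verification that $\tau_{i}$ sends the bottom $a_{\varphi_{l}(i)}$ of the filter to $0_{i}$ --- via the dichotomy $a_{\varphi_{l}(i)}=0$ for $i$ minimal versus $a_{\varphi_{l}(i)}=h_{i'}$ with $h_{i'}(i)=0_{i}$ otherwise --- is sound and fills in a point the paper leaves implicit.
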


\noindent
Notice that, from the universal property of quotients in $\MTL$, Lemma \ref{family of morphisms} implies that for every $i\in F$ there exists a unique morphism of MTL-algebras $f_{i}:(\uparrow a_{\varphi_{l}(i)})/(\uparrow \varphi_{l}(i))\rightarrow l(i)$ such that the diagram
\begin{displaymath}
\xymatrix{
\uparrow (a_{\varphi_{l}(i)}) \ar[r] \ar[dr]_-{\tau_{i}} & \uparrow (a_{\varphi_{l}(i)})/\uparrow (\varphi_{l}(i)) \ar[d]^-{f_{i}} \\
 & l(i)
}
\end{displaymath}
\noindent
commutes.

\begin{lem}\label{family of morphisms are isos}
For every $i \in F$, $f_{i}:(\uparrow a_{\varphi_{l}(i)})/(\uparrow \varphi_{l}(i))\rightarrow l(i)$ is an isomorphism.
\end{lem}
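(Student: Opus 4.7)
My plan is to prove this by first giving an explicit description of $\uparrow a_{\varphi_{l}(i)}$, then showing that $\tau_{i}$ is surjective and that the filter it determines via Lemma \ref{Characterization of Monomorphisms}-style arguments is precisely $\uparrow \varphi_{l}(i)$, from which the universal property of the quotient forces $f_{i}$ to be an isomorphism.

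First I would identify $a_{\varphi_{l}(i)}$. Using the isomorphism $\varphi_{l}$ from Lemma \ref{pmorphismofcounit is an iso} together with Lemma \ref{uniqejoinirreducible}, the unique element of $\JI{\Pprod_{l}(\textbf{F})} \cup \{0\}$ covered by $h_{i}=\varphi_{l}(i)$ is $h_{j}$ when $j\prec i$ in $\textbf{F}$, or $0$ if $i \in Min(F)$ (in which case $\uparrow a_{\varphi_{l}(i)} = \Pprod_{l}(\textbf{F})$). In either case, an element $h \in \uparrow a_{\varphi_{l}(i)}$ is characterized by the condition $h(k)=1$ for every $k<i$.

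Next, I would check that $\tau_{i}:\uparrow a_{\varphi_{l}(i)}\rightarrow l(i)$ is surjective. Given $x\in l(i)$, define $h\in (\bigcup_{k\in F} l(k))^{F}$ by setting $h(k)=1$ for $k<i$, $h(i)=x$, and $h(k)=0_{k}$ otherwise. A direct check using $(2)$ of Lemma \ref{Equivalent forms of forest product} (together with the fact that below $i$ the forest $\textbf{F}$ is totally ordered) shows $h\in \Pprod_{l}(\textbf{F})$, and clearly $h\geq a_{\varphi_{l}(i)}$ with $\tau_{i}(h)=x$.

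The key step is to show that $\ker(\tau_{i}) = \{h\in \uparrow a_{\varphi_{l}(i)} \mid h(i)=1\}$ coincides with the filter $\uparrow \varphi_{l}(i)$ of $\uparrow a_{\varphi_{l}(i)}$. The inclusion $\uparrow h_{i} \subseteq \ker(\tau_{i})$ is immediate from Corollary \ref{descriptionofprincipalXfilters}.3. For the reverse inclusion, if $h\in \uparrow a_{\varphi_{l}(i)}$ and $h(i)=1$, then $h(k)=1$ for all $k\leq i$, so by Corollary \ref{descriptionofprincipalXfilters}.3, $h\geq h_{i}=\varphi_{l}(i)$. I would then invoke Lemma \ref{Characterization of Monomorphisms}: if $\tau_{i}(f)=\tau_{i}(g)$, the residuals $f\rightarrow g$ and $g\rightarrow f$, computed using Definition \ref{Poset Product}, both satisfy $(f\rightarrow g)(k)=(g\rightarrow f)(k)=1$ for all $k\leq i$ (because $f(m)=g(m)=1$ for $m<i$, so the residuation clause applies at every such $k$ and yields $1\to_{k}1=1$ at $k<i$ and $f(i)\to_{i}g(i)=1$ at $i$). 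Hence $f$ and $g$ are congruent modulo $\uparrow \varphi_{l}(i)$.

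Combining these facts, $f_{i}$ is injective by the first isomorphism theorem for MTL-algebras (as the quotient by $\uparrow \varphi_{l}(i)$ factors out exactly the congruence generated by $\ker(\tau_{i})$) and surjective because $\tau_{i}$ is. The main obstacle I anticipate is verifying the residual computation cleanly: one must carefully distinguish the case $k<i$ from $k$ incomparable with $i$ and exploit that $\downarrow i$ is a chain so that the condition ``for all $m<k$, $f(m)\leq g(m)$'' in Definition \ref{Poset Product} is trivially satisfied whenever $k\leq i$. The edge case $i\in Min(F)$ needs only the minor adjustment that $a_{\varphi_{l}(i)}=0$ and $\uparrow a_{\varphi_{l}(i)}=\Pprod_{l}(\textbf{F})$; the argument is otherwise identical.
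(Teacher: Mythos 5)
Your proposal is correct and follows essentially the same route as the paper: surjectivity via the explicit section $h(k)=1$ for $k<i$, $h(i)=x$, $h(k)=0_{k}$ otherwise, and injectivity by showing that two elements of $\uparrow a_{\varphi_{l}(i)}$ agreeing at $i$ are congruent modulo $\uparrow\varphi_{l}(i)$. Your explicit computation of the residuals on $\downarrow i$ and the identification $\ker(\tau_{i})=\uparrow\varphi_{l}(i)$ merely spell out steps the paper leaves implicit.
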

\begin{proof}
To prove the inyectivity of $f_{i}$, supppose that $f_{i}([f])=f_{i}([g])$. Then $f(i)=g(i)$. Since $ a_{\varphi_{l}(i)} \leq f,g$, for every $j<i$, $f(j)=g(j)=1$. Hence, $f(i)\leq g(i)$, for every $i\leq j$, which is equivalent to say that $[f]=[g]$ in $(\uparrow a_{\varphi_{l}(i)})/(\uparrow \varphi_{l}(i))$. To check the surjectivity, take $x\in l(i)$. Define
\begin{displaymath}
h(k)= \left\{ \begin{array}{ll}
             1, &  k\leq i\\
             x, &  k=i \\
             0_{k}, & otherwise
             \end{array}
   \right.
\end{displaymath}
It is clear that $f_{i}([h])=x$.
\end{proof}
\noindent
Let $\Ida$ and $\Vuelta$ be the functors from Theorems \ref{functor from fMTL to root systems} and \ref{functor from forests to fMTL}, respectively.

\begin{lem}\label{counit is an isomorphism}
Let $l:\textbf{F}\rightarrow \Skel$ be a finite labeled forest. Then $\Ida \circ \Vuelta$ is isomorphic to the identity functor in $\fLF$.
\end{lem}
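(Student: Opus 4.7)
The plan is to exhibit, for each finite labeled forest $l:\textbf{F}\to\Skel$, a morphism
\[\eta_l : l \longrightarrow (\Ida\circ\Vuelta)(l) = \etiq{\Pprod_l(\textbf{F})}\]
in $\fLF$ that is an isomorphism, and to check that the assignment $l\mapsto \eta_l$ is natural. The two nontrivial ingredients needed for the construction are already available: the poset p-morphism $\varphi_l: F\to \JI{\Pprod_l(\textbf{F})}$ of Lemma \emph{pmorphismofcounit} and the MTL-morphisms $f_i: (\uparrow a_{\varphi_l(i)})/(\uparrow \varphi_l(i))\to l(i)$ obtained from Lemma \emph{family of morphisms} together with the universal property of quotients.

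Concretely, I would define $\eta_l := (\varphi_l, \mathcal{F}_l)$, where $\mathcal{F}_l = \{f_i\}_{i\in F}$. Note that this is of the right shape to be a morphism $l\to\etiq{\Pprod_l(\textbf{F})}$ in $\fLF$ because, by the very definition of $\etiq{\Pprod_l(\textbf{F})}$, we have $(\etiq{\Pprod_l(\textbf{F})}\circ\varphi_l)(i) = \uparrow a_{\varphi_l(i)}/\uparrow\varphi_l(i)$, and Lemma \emph{family of morphisms are isos} shows each $f_i$ is an isomorphism of MTL-algebras, in particular an injective MTL-morphism as the definition of morphism in $\fLF$ requires. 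That $\varphi_l$ is a p-morphism is Lemma \emph{pmorphismofcounit}.

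To prove that $\eta_l$ is an isomorphism in $\fLF$, the inverse is $\eta_l^{-1} := (\varphi_l^{-1}, \{f_i^{-1}\ \text{indexed suitably}\})$: Lemma \emph{pmorphismofcounit is an iso} gives that $\varphi_l$ is an isomorphism of posets (hence its inverse is also a p-morphism on the finite posets involved), and Lemma \emph{family of morphisms are isos} yields the MTL-algebra inverses of the $f_i$. A routine computation—using that the composition in $\fLF$ pairs composition of p-morphisms with pointwise composition of the MTL-morphisms in the second coordinate—shows $\eta_l \circ \eta_l^{-1}$ and $\eta_l^{-1}\circ \eta_l$ are identities.

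The main obstacle is the naturality square. Given a morphism $(\psi,\mathcal{G}):l\to m$ of finite labeled forests, one must verify
\[\eta_m \circ (\psi,\mathcal{G}) \;=\; \Ida(\Vuelta(\psi,\mathcal{G}))\circ \eta_l.\]
Unwinding, the first coordinate of the right-hand side is the p-morphism $(\Vuelta(\psi,\mathcal{G}))^\ast$ produced by Lemma \emph{assignation p morphism} applied to $\Vuelta(\psi,\mathcal{G}):\Pprod_m(\textbf{G})\to\Pprod_l(\textbf{F})$. Using the explicit description of $\varphi_l,\varphi_m$ in Corollary \emph{descriptionofprincipalXfilters}.3 together with the concrete form of $\Vuelta(\psi,\mathcal{G})=\alpha\gamma\beta$, one checks that this p-morphism takes $h_i\in\JI{\Pprod_l(\textbf{F})}$ to $h_{\psi(i)}\in\JI{\Pprod_m(\textbf{G})}$, which is exactly $\varphi_m\circ\psi\circ\varphi_l^{-1}$, matching the first coordinate of the left-hand side. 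For the second coordinate one uses Lemma \emph{labelingMTLmorphisms} to identify the induced quotient map with the appropriate $g_{\psi(i)}$ (up to the isomorphisms $f_i, f_{\psi(i)}$), and the commutativity then reduces to the defining compatibility between the $\tau_i$ and the component maps of $\mathcal{G}$. Since the required computations are exactly the ones packaged in Lemmas \emph{First igredient}, \emph{Second igredient}, and \emph{labelingMTLmorphisms}, naturality follows.
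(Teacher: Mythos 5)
Your proposal is correct and takes essentially the same approach as the paper, whose entire proof is ``a direct application'' of the two lemmas you invoke: the one showing $\varphi_{l}$ is a poset isomorphism and the one showing each $f_{i}:(\uparrow a_{\varphi_{l}(i)})/(\uparrow \varphi_{l}(i))\rightarrow l(i)$ is an MTL-isomorphism, which are exactly the components you assemble into $\eta_{l}=(\varphi_{l},\{f_{i}\}_{i\in F})$. You are in fact more thorough than the paper, which omits the naturality verification entirely; your sketch of that square is plausible and uses the right ingredients.
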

\begin{proof}
A direct application of Lemmas \ref{pmorphismofcounit is an iso} and \ref{family of morphisms are isos}.
\end{proof}

\begin{prop}\label{adjunction for finite labeled trees}
The functor $\Ida$ is left adjoint to the functor $\Vuelta$. Moreover, $\Ida$ is full and faithful.
\end{prop}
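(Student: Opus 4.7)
The plan is to construct a natural unit $\eta \colon \mathrm{id}_{\fMTL} \Rightarrow \Vuelta \circ \Ida$, combine it with the counit furnished by Lemma \ref{counit is an isomorphism}, verify the triangle identities, and then read off the full-faithfulness of $\Ida$ from the counit being a natural isomorphism.

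For $M \in \fMTL$ and $e \in \JI{M}$ I would set
\[\eta_M(x)(e) \;:=\; [\,x \vee a_e\,] \,\in\, \uparrow a_e / \uparrow e,\]
which may be viewed as the composite $M \to M/\uparrow e \to \uparrow [a_e]$ in which the second map is ``join with $[a_e]$'' and $[a_e]$ is idempotent in the chain $M/\uparrow e$. I would first check that $\eta_M(x)$ does belong to the forest product $\Vuelta(\Ida(M)) = \bigotimes_{e \in \JI{M}}(\uparrow a_e/\uparrow e)$: if $\eta_M(x)(e) \neq 0_e$ then $a_e \leq x$ in $M$, and for any $e' < e$ in $\JI{M}$ the facts that $\downarrow e$ is a chain (Lemma \ref{Spec is a root system}) with $a_e$ its coatom in $\JI{M} \cup \{0\}$ (Lemma \ref{uniqejoinirreducible}) force $e' \leq a_e \leq x$, whence $\eta_M(x)(e') = 1$.

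The main obstacle is showing that $\eta_M$ is a morphism of MTL-algebras, and in particular that it preserves the residual. Preservation of $\cdot$, $\vee$, $\wedge$ is routine, using the idempotence of $[a_e]$ and the distributivity of the underlying lattice (valid in any MTL-algebra by prelinearity). For $\to$, one cannot argue pointwise, since ``join with an idempotent'' does not in general commute with residuation; instead I would exploit the piecewise definition of the forest-product residual (Definition \ref{Poset Product}) and split into cases. In the ``good'' case where $\eta_M(x)(e') \leq \eta_M(y)(e')$ for every $e' < e$, I combine the MTL identity $x \to (y \vee a_e) = (x \to y) \vee (x \to a_e)$ with the filter-residual formula $u \to_{\uparrow a_e} v = (u \to_M v) \vee a_e$ to match $\eta_M(x)(e) \to \eta_M(y)(e)$ with $\eta_M(x \to y)(e)$. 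In the ``bad'' case, the failure of the comparison at some $e' < e$ translates back to $e' \not\leq x \to y$ in $M$, and the chain structure of $\downarrow e$ forces both the piecewise value $0_e$ and $\eta_M(x \to y)(e)$ to coincide at zero.

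Naturality of $\eta$ in $M$ is then a direct check against the explicit descriptions of $\Ida$ on morphisms (Lemma \ref{labelingMTLmorphisms}) and of $\Vuelta$ on morphisms (the composite $\alpha\gamma\beta$ of Theorem \ref{functor from forests to fMTL}); the two triangle identities reduce to tracing $\eta$ against the explicit isomorphisms $\varphi_l$ and $f_i$ of Lemmas \ref{pmorphismofcounit is an iso} and \ref{family of morphisms are isos}. With the adjunction $\Ida \dashv \Vuelta$ in place, the natural isomorphism $\Ida \circ \Vuelta \cong \mathrm{id}_{\fLF}$ supplied by Lemma \ref{counit is an isomorphism} is the counit, and the full-faithfulness of $\Ida$ then follows from the standard adjoint-functor criterion that the counit is a natural isomorphism if and only if the corresponding functor is fully faithful.
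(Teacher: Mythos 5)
Your strategy---writing down an explicit unit $\eta_M\colon M\to\Vuelta(\Ida(M))$ and combining it with the natural isomorphism $\Ida\circ\Vuelta\cong\mathrm{id}_{\fLF}$---is a genuinely different route from the paper's, which instead constructs one direction of the hom-set bijection $\hom_{\fMTL}(M,\Vuelta(l))\cong\hom_{\fLF}(l,\Ida(M))$ directly: from $f\colon M\to\Pprod_l(\textbf{F})$ it assembles a morphism $l\to\Ida(M)$ out of $f^{\ast}$ (Lemma \ref{assignation p morphism}), the isomorphism $\mu$ of Lemma \ref{forest is isomorphic to join irreducibles forest product}, and the quotient morphisms of Lemma \ref{labelingMTLmorphisms} composed with the isomorphisms of Lemma \ref{family of morphisms are isos}, and never exhibits a unit on the algebra side. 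Your formula $\eta_M(x)(e)=[x\vee a_e]$ is the right candidate (it recovers the map $f_M$ of Subsection \ref{The duality theorem} in the representable case), but your membership check is off: $\eta_M(x)(e)\neq 0_e$ does \emph{not} say $a_e\leq x$. Unwinding the congruence modulo $\uparrow e$ it says exactly $ex\nleq a_e$, and the conclusion you need, namely $e'\leq x$ for every $e'<e$ in $\JI{M}$, requires a separate argument (for instance via primeness of $\uparrow e'$ and the fact that $e\in\uparrow e'$). That gap looks repairable, as does the admittedly delicate case analysis for preservation of the residual.

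The step that cannot be repaired is the last one. For a dual adjunction packaged as $\hom_{\fMTL}(M,\Vuelta(l))\cong\hom_{\fLF}(l,\Ida(M))$, the assignment $g\mapsto\Ida(g)$ on $\hom_{\fMTL}(M,N)$ factors through post-composition with the algebra-side unit $\eta_N\colon N\to\Vuelta(\Ida(N))$, so $\Ida$ is full and faithful if and only if $\eta_N$ is an isomorphism for \emph{every} finite MTL-algebra $N$; the natural isomorphism $\Ida\circ\Vuelta\cong\mathrm{id}_{\fLF}$ lives on the other side of the adjunction and yields full-faithfulness of $\Vuelta$, not of $\Ida$. Since the paper itself emphasizes that $\eta_N$ fails to be an isomorphism outside the representable case (this is the raison d'\^etre of Subsection \ref{The duality theorem}), your appeal to the ``(co)unit iso iff fully faithful'' criterion is aimed at the wrong functor, and in fact the clause is false as stated: for the four-element MTL-chain $M=\{0<x<e<1\}$ with $e^2=e$ and $ex=x^2=0$ and $N=\mathbf{2}\oplus\mathbf{2}$ one computes $|\hom_{\fMTL}(N,M)|=1$ while $|\hom_{\fLF}(\Ida(M),\Ida(N))|=2$ (the identity p-morphism on the two-element chain with identity labels is not in the image), so $\Ida$ is not full. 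The paper's own proof is silent on the ``moreover'' clause; what your reasoning actually establishes is that $\Vuelta$ is full and faithful.
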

\begin{proof}
Let $M$ be a finite MTL-algebra, $l:\textbf{F}\rightarrow \Skel$ be a finite labeled forest and $f:M\rightarrow \Pprod_{l}(\textbf{F})$ be a morphism of MTL-algebras. By Lemma \ref{assignation p morphism}, there exists a unique p-morphism $f^{\ast}: \JI{\Pprod_{l}(\textbf{F})} \rightarrow \JI{M}$. Let $\varphi:F\rightarrow \JI{M}$ be defined as $\varphi=f^{\ast}\mu^{-1}$, where $\mu$ is the isomorphism between $F$ and $\JI{\Pprod_{l}(\textbf{F})}$ given in Lemma \ref{forest is isomorphic to join irreducibles forest product}. It is clear that $\varphi$ is a p-morphism. On the other hand, if we write $h_{i}$ for $\mu^{-1}(i)$, then from Lemma \ref{labelingMTLmorphisms}, it follows that for every $i\in F$ there exists a morphism of MTL-algebras $\overline{f}_{h_{i}}:\uparrow a_{f^{\ast}(h_{i})}\rightarrow \uparrow a_{h_{i}}$ which determines a unique morphism of MTL-algebras $f_{h_{i}}:(\uparrow a_{f^{\ast}(h_{i})})/(\uparrow f^{\ast}(h_{i}))\rightarrow (\uparrow a_{h_{i}})/ (\uparrow h_{i})$. From Lemma \ref{family of morphisms are isos} we get that $(\uparrow a_{h_{i}})/ (\uparrow h_{i} \cong l(i))$. Let us to consider $f_{i}:(\uparrow a_{f^{\ast}(h_{i})})/(\uparrow h_{i})\rightarrow l(i)$ as the composition of $f_{h_{i}}$ with the isomorphism of Lemma \ref{family of morphisms are isos}. This concludes the proof.

\end{proof}

\subsection{The duality theorem}
\label{The duality theorem}

In this subsection we present a duality theorem between the class of representable finite MTL-algebras and finite labeled forests. To do so, we restrict the results previously obtained in this section to the class of representable finite MTL-algebras.

\begin{defi}\label{local unit}
Let $M$ be a finite MTL-algebra. An element $e\in \mathcal{I}(M)^{\ast}$ is said to be a \emph{$\GoodId$} if for every $x \leq e$, $ex=x$.
\end{defi}

\begin{lem}\label{equivalent to local  unit}
Let $M$ be a finite MTL-algebra and $e\in \mathcal{I}(M)^{\ast}$. The following are equivalent:
\begin{itemize}
\item[1.] $e$ is a $\GoodId$.
\item[2.] $ey=e\wedge y$, for every $y\in M$.
\end{itemize}
\end{lem}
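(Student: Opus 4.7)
The plan is to prove the two implications separately, noting that the nontrivial direction relies only on the integrality of $M$ together with idempotence of $e$.

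First, the implication $(1) \Rightarrow (2)$. Assume $e$ is a local unit. I would first observe that, since $M$ is integral, we have $y \leq 1$ and $e \leq 1$, hence by monotonicity of the product $ey \leq e \cdot 1 = e$ and $ey \leq 1 \cdot y = y$. This gives the easy inequality $ey \leq e \wedge y$ in any MTL-algebra. For the reverse inequality, set $x := e \wedge y$. Then $x \leq e$, so the local unit hypothesis applied to $x$ yields $ex = x$, i.e., $e(e \wedge y) = e \wedge y$. On the other hand, since $e \wedge y \leq y$, monotonicity of the product again gives $e(e \wedge y) \leq ey$. Chaining these two facts we obtain $e \wedge y \leq ey$, and together with the previous inequality we conclude $ey = e \wedge y$.

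The implication $(2) \Rightarrow (1)$ is then immediate: if $ey = e \wedge y$ for every $y \in M$, and $x \leq e$, then $ex = e \wedge x = x$, so $e$ is a local unit.

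I do not expect any genuine obstacle here; the statement is a straightforward consequence of integrality plus idempotence of $e$, and in fact nothing beyond these is used (in particular prelinearity, the boundedness by $0$, and finiteness of $M$ play no role in the argument). The only subtle point worth emphasizing in the writeup is that the inequality $ey \leq e \wedge y$ holds in any integral commutative residuated lattice, so the meat of the argument reduces to the single observation that the local unit property applied to $e \wedge y$ identifies $e(e \wedge y)$ with $e \wedge y$ itself.
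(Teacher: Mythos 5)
Your proof is correct and follows essentially the same route as the paper's: the key step in both is to apply the local unit property to $x = e\wedge y$ and combine $e(e\wedge y)=e\wedge y$ with $e(e\wedge y)\leq ey$ to get $e\wedge y\leq ey$, the reverse inequality being immediate from integrality. Your writeup is in fact slightly more complete, since the paper leaves the inequality $ey\leq e\wedge y$ and the implication $(2)\Rightarrow(1)$ as "straightforward."
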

\begin{proof}
Let us assume that $e$ is a {$\GoodId$} and $y\in M$. Since $e\wedge y\leq y$, $e(e\wedge y)\leq ey$. Hence $e\wedge y\leq ey$, because $e\wedge y\leq e$ and $e(e\wedge y)=e\wedge y$. The other case is straghtforward.
\end{proof}

\begin{defi}\label{goodidempotents}
A finite MTL-algebra $M$ is said to be ${\Appropiate}$ if every non zero idempotent satisfies any of the equivalent conditions of Lemma \ref{equivalent to local  unit}.
\end{defi}
\noindent
Let $M$ be a representable MTL-algebra. For the rest of this section we will write $F_{M}$ to denote $\JI{M}$.

\begin{rem}\label{quotients are intervals}
Observe that as a direct consequence of Definition \ref{local unit}, it follows that for every $e\in F_{M}$, $\uparrow a_{e}/\uparrow e\cong [a_{e},e]$.
\end{rem}

\noindent
Let $M$ be an representable MTL-algebra and $m\in Max(F_{M})$. In what follows we will denote the set $(\downarrow m) \cap F_{M}$ simply by $(\downarrow m)$.

\begin{lem}\label{surjectiveunit}
Let $M$ be a representable MTL-algebra and $m\in Max(F_{M})$. Then, for every $x\in M$ there exists some $e\in \downarrow m$ such that $a_{e}\leq x$.
\end{lem}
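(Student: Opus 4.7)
The plan is to exhibit the minimum of the chain $\downarrow m$ as a uniform witness $e$ that works for every $x\in M$ at once. First I would invoke Lemma \ref{Spec is a root system} together with Corollary \ref{Spec iso to JI} to see that $F_M=\JI{M}$ is itself a finite forest; since $m$ sits maximally inside it, the downset $(\downarrow m)\cap F_M$ (which, by the convention stated just before the lemma, is what $\downarrow m$ denotes) is a nonempty finite chain, and I would let $e_0$ be its least element.

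Next I would check that $e_0\in m(M)$. Indeed, if some $k\in F_M$ satisfied $k<e_0$, then transitivity would give $k<e_0\leq m$, so $k$ itself would belong to $(\downarrow m)\cap F_M$, contradicting minimality of $e_0$ in that set. Hence $e_0$ is a minimal join-irreducible idempotent of $M$. By the note immediately following Lemma \ref{uniqejoinirreducible}, membership in $m(M)$ is equivalent to $a_{e_0}=0$, so taking $e:=e_0$ yields $a_e=0\leq x$ for every $x\in M$, as required.

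The only genuinely needed ingredient is that $F_M$ is a forest, so that $\downarrow m$ really is a chain with a well-defined minimum lying in $m(M)$; everything else is an immediate appeal to earlier results. The representability hypothesis plays no role in this particular step, which suggests the lemma is being stated in its weakest form because the subsequent results will do the actual work of using the local unit condition to turn a chosen $e$ into useful data about $x$. If the intended reading were stronger, for instance requiring also $x\leq e$ or maximality of $e$ among witnesses, then the argument would need to be refined by invoking the equivalence $ex=x\wedge e$ from Lemma \ref{equivalent to local  unit} to select the correct level of the finite chain $\downarrow m$; but as literally stated, the trivial witness $e_0$ already suffices and no obstacle arises.
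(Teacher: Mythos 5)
Your proof is correct and follows essentially the same route as the paper: the paper also takes $n=\min(\downarrow m)$, notes $a_{n}=0$, and concludes $a_{n}\leq x$ trivially (phrased there as a contradiction). Your version just makes explicit the justification that the minimum of the chain $\downarrow m$ lies in $m(M)$, and your observation that representability is not actually used matches the paper's argument.
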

\begin{proof}
Suppose that there exist $y>0$ such that for every $e\in \downarrow m$ it holds that $a_{e}\nleq y$. In particular, if $n=min (\downarrow m)$, it follows that $a_{n}=0$; so $0\nleq x$, which is clearly absurd.
\end{proof}

\begin{lem}\label{maximal fibers coincide with quotients}
For every representable MTL-algebra $M$ and $m\in Max(F_{M})$, $M/(\uparrow m) \cong   \bigoplus_{e\leq m} [a_{e},e]$.
\end{lem}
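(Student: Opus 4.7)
My plan is to construct a surjective MTL-morphism $\phi\colon M \to \bigoplus_{e \leq m}[a_{e}, e]$ with kernel $\uparrow m$ and invoke the universal property of the canonical quotient. As setup, since $m \in F_{M}$, Corollary \ref{joinirreducibledeterminesprimefilters} makes $\uparrow m$ prime, so $M/(\uparrow m)$ is an MTL-chain; representability (Lemma \ref{equivalent to local  unit}) implies $a\sim b$ modulo $\uparrow m$ iff $a\wedge m=b\wedge m$, so the map $y\mapsto [y]$ is an order isomorphism from $D:=\{y\in M\mid y\leq m\}$ onto $M/(\uparrow m)$ and $D$ is therefore totally ordered. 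Enumerate $\downarrow m$ as $e_{0}<e_{1}<\cdots<e_{k}=m$ in $F_{M}$; since $\downarrow e_{0}\cap F_{M}=\{e_{0}\}$, one has $a_{e_{0}}=0$. By Corollary \ref{finite Forest Product Sheaf is MTLchain} applied to this chain with labeling $e\mapsto [a_{e},e]$, the target $\bigoplus_{e\leq m}[a_{e},e]$ is isomorphic to the forest product $\Pprod(\downarrow m):=\bigotimes_{e\in \downarrow m}[a_{e},e]$, so it suffices to build $\phi\colon M\to \Pprod(\downarrow m)$.

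I define $\phi(x)(e):=(x\wedge e)\vee a_{e}$ for $x\in M$ and $e\in \downarrow m$; representability rewrites $x\wedge e=ex$, and $a_{e}\leq e$ makes the value lie in $[a_{e},e]$. To see $\phi(x)\in\Pprod(\downarrow m)$, assume $\phi(x)(e)\neq a_{e}$; then $x\wedge e>a_{e}$ in the total order of $D$, and for $e'<e$ in the chain, $e'\leq a_{e}<x\wedge e\leq x$ forces $x\wedge e'=e'$, so $\phi(x)(e')=e'=1_{[a_{e'},e']}$. For the morphism check: $\wedge$ and $\vee$ reduce to lattice distributivity; for the product, expanding $(ex\vee a_{e})(ey\vee a_{e})$ and absorbing $a_{e}(ex)=a_{e}\wedge x\leq a_{e}$, $a_{e}^{2}=a_{e}$, and the symmetric term, yields $e(xy)\vee a_{e}=\phi(xy)(e)$; for the residual I split on whether $\phi(x)(e')\leq \phi(y)(e')$ holds for every $e'<e$, translate this via representability into $x\wedge e'\leq y\wedge e'$, and use the identification $\uparrow a_{e}/\uparrow e\cong [a_{e},e]$ of Remark \ref{quotients are intervals} to match both cases with $\phi(x\to y)(e)$.

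For the kernel, $\phi(x)=1$ iff $(x\wedge e)\vee a_{e}=e$ for every $e\in\downarrow m$, iff $x\geq e$ for every such $e$ (using totality of $D$), iff $x\geq m$; thus $\ker\phi=\uparrow m$. For surjectivity, given $h\in\Pprod(\downarrow m)$ with $h\neq 1$, Lemma \ref{Equivalent forms of forest product} forces $|C_{h}|\leq 1$, so there is a smallest $i$ with $h(e_{i})<e_{i}$; taking $x:=h(e_{i})\in [a_{e_{i}},e_{i}]\subseteq M$ and checking values at $e_{j}$ for $j<i$, $j=i$, $j>i$ gives $\phi(x)=h$. The universal property of the canonical quotient then produces the isomorphism $M/(\uparrow m)\cong \bigoplus_{e\leq m}[a_{e},e]$. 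The main technical obstacle is reconciling the piecewise residual formula in $\Pprod(\downarrow m)$ with $(x\to y)\wedge e\vee a_{e}$; all other operations behave directly under representability.
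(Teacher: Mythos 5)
Your proof is correct and takes essentially the same route as the paper's: both construct the surjection $x\mapsto x\wedge m$ onto $\bigoplus_{e\leq m}[a_{e},e]$ and conclude by the universal property of the quotient by $\uparrow m$; your $\phi(x)(e)=(x\wedge e)\vee a_{e}$ is exactly that map written in forest-product coordinates via the identification of the ordinal sum with $\bigotimes_{e\in\downarrow m}[a_{e},e]$. The paper merely asserts that this map is a morphism while you verify it, and your verification does go through --- in the residual step note that it is the quantified conditions ``$\phi(x)(e')\leq\phi(y)(e')$ for all $e'<e$'' and ``$x\wedge e'\leq y\wedge e'$ for all $e'<e$'' that coincide (the pointwise versions need not), which is what your case split actually requires.
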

\begin{proof}
In order to simplify the notation for this proof, we write $B$ for $\bigoplus_{e\leq m} [a_{e},e]$. Let us consider the map $f:M\rightarrow B$, defined as $f(x)=x\wedge m$. It is clear that $f$ is a surjective morphism of MTL-algebras such that $f(m)= 1 \in B$. We stress that $B$ has the universal property of $M/\uparrow m$. To prove it, let $g:M\rightarrow E$ be a morphism of MTL-algebras such that $g(m)=1_{E}$. For every $z\in B$ define $\psi(z)=g(z)$. It easily follows that $\psi\circ f=g$, and since $f$ is surjective, then $\psi$ is unique. Hence $M/(\uparrow m) \cong   B$.
\end{proof}

\noindent
Let $M$ be a representable MTL-algebra. Recall that $\Ida(M)=l_{M}:F_{M}\rightarrow \Skel$ is a finite labeled forest, so, from Lemma \ref{forest is isomorphic to join irreducibles forest product}, it follows that $F_{M}\cong \JI{\Pprod_{l_{M}}(F_{M})}$. As a consequence, we get that $\bigoplus_{e\leq m} [a_{e},e]\cong \Pprod_{l_{M}}(\downarrow m)$. Explicitly, such assignment is defined for every $z\in \bigoplus_{e\leq m} [a_{e},e]$ by:

\begin{equation}
h_{z}(c)= \left\{ \begin{array}{ll}
             m, &  c\leq e_{z}\\
             z, &  c=e_{z} \\
             a_{c}, & c>e_{z}
             \end{array}
   \right. \label{equation3}
\end{equation}
\noindent
Here $e_{z}$ is the unique idempotent join irreducible below $m$ such that $a_{e_{z}}\leq z\leq e_{z}$.
\\

\noindent
Observe that $F_{M}=\bigcup_{m\in F_{M}} (\downarrow m)$. Hence the family $\mathcal{R}=\{\downarrow m\}_{m\in M}$ is a covering for $F_{M}$. Let $f_{m}:M\rightarrow \Pprod_{l_{M}}(\downarrow m)$ be defined as $f_{m}(x)=h_{x\wedge m}$.

\begin{lem}\label{algebra induces compatible family}
Let $M$ be a representable MTL-algebra. For every $x\in M$, the family $\{f_{m}(x)\}_{m\in Max(F_{M})}$ is a matching family for the covering $\mathcal{R}$.
\end{lem}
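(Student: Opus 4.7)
The plan is to verify the matching condition directly: for every pair $m, m' \in Max(F_{M})$, I would show that $f_{m}(x)$ and $f_{m'}(x)$ agree on $U := (\downarrow m) \cap (\downarrow m')$. Since $F_{M}$ is a forest (by Lemma \ref{Spec is a root system} and Corollary \ref{Spec iso to JI}), $U$ is either empty, in which case the statement is vacuous, or equals $\downarrow c_{0}$ for a unique $c_{0} \in F_{M}$, namely the maximum element of $U$.

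The first step is to identify $c_{0}$ with the lattice meet $m \wedge m'$ computed in $M$. Using representability, Lemma \ref{equivalent to local  unit} gives $mm' = m \wedge m'$, whence $m \wedge m'$ is idempotent; because $0 < m \wedge m' \leq m$ with $m \in F_{M}$, Lemma \ref{Closurejoinirreducibles} forces $m \wedge m' \in F_{M}$. Maximality of $c_{0}$ among common lower bounds of $m$ and $m'$ in $F_{M}$ then yields $c_{0} = m \wedge m'$, and in particular
\[
(x \wedge m) \wedge c_{0} \;=\; x \wedge c_{0} \;=\; (x \wedge m') \wedge c_{0}.
\]

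The second step is a case split according to whether $x \geq c_{0}$. If $x \geq c_{0}$, then $x \wedge m \geq c_{0}$ forces $e_{x \wedge m} \geq c_{0}$ inside the chain $(\downarrow m) \cap F_{M}$; by formula (\ref{equation3}), $h_{x \wedge m}(c) = 1$ for every $c \leq c_{0}$ (in the boundary subcase $e_{x \wedge m} = c_{0}$, the value $x \wedge m = c_{0}$ is precisely the top of the fiber $[a_{c_{0}}, c_{0}]$). Symmetrically $h_{x \wedge m'}(c) = 1$ for $c \leq c_{0}$. If instead $x \not\geq c_{0}$, I would argue that $e_{x \wedge m}$ and $c_{0}$ are comparable, because both belong to the chain $(\downarrow m)\cap F_M$; the inequality $e_{x \wedge m} > c_{0}$ would then imply $a_{e_{x \wedge m}} \geq c_{0}$ and so $x \wedge m \geq c_{0}$, contradicting $x \not\geq c_{0}$. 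Hence $e_{x \wedge m} \leq c_{0}$, whence $x \wedge m \leq c_{0}$ and, combined with $c_{0} \leq m$, $x \wedge m = x \wedge c_{0}$. Symmetrically $x \wedge m' = x \wedge c_{0}$, so $e_{x \wedge m} = e_{x \wedge m'}$ and formula (\ref{equation3}) yields identical values at every $c \leq c_{0}$.

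The main obstacle is ensuring the case split is exhaustive, which ultimately rests on the chain structure of $(\downarrow m) \cap F_{M}$ guaranteed by the forest property: this is what forces $e_{x \wedge m}$ to be comparable with $c_{0}$ and lets representability propagate the matching from lattice meets to pointwise equalities. A secondary bookkeeping subtlety is the dual role of each $c \in F_{M}$, serving simultaneously as a forest node and as the top element $1$ of its labeling fiber $[a_{c}, c]$; formula (\ref{equation3}) must be read consistently under this identification at the boundary subcases.
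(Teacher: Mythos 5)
Your proposal is correct and follows essentially the same route as the paper: split on whether $(\downarrow m)\cap(\downarrow m')$ is empty, identify the nonempty intersection with a principal downset $\downarrow c_0$, derive $(x\wedge m)\wedge c_0 = x\wedge c_0 = (x\wedge m')\wedge c_0$, and conclude via formula (\ref{equation3}). You merely add detail the paper leaves implicit — the identification $c_0 = m\wedge m'$ via representability (not strictly needed, since only $c_0\leq m,m'$ enters the computation) and the explicit case analysis on $x\geq c_0$ unpacking the final one-line appeal to (\ref{equation3}).
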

\begin{proof}
Let $m,n\in Max(F_{M})$. Since $F_{M}$ is a forest, $(\downarrow m) \cap (\downarrow n) =\emptyset$ or $(\downarrow m) \cap (\downarrow n) \neq \emptyset$. In the first case, the result holds, because by Lemma \ref{Forest Product are sheaves}, $\Pprod_{l_{M}}$ is a sheaf. In the second case, there exists an $e\in F_{M}$ with $e\leq m,n$ such that $(\downarrow m)\; \cap (\downarrow n)=(\downarrow e)$. Observe that
\[(x\wedge m)\wedge e=x\wedge (m\wedge e)= x\wedge e= x\wedge (n\wedge e)= (x\wedge n)\wedge e.\]
Then, from the descripiton of $h_{x\wedge m}$ and $h_{x\wedge n}$ of equation (\ref{equation3}), we obtain that $h_{x\wedge m}|_{\downarrow e}=h_{x\wedge n}|_{\downarrow e}$.
\end{proof}

\noindent
Recall that Lemma \ref{Forest Product are sheaves} states that $\Pprod_{l_{M}}$ is a sheaf so, from Lemma \ref{algebra induces compatible family}, we obtain that every $x\in M$ determines an amalgamation $h_{x}$ for the family $\{f_{m}(x)\}_{m\in Max(F_{M})}$. This fact allows us to consider the assignment $f_{M}:M\rightarrow \Pprod(F_{M})$, defined as $f(x)=h_{x}$. Observe that by construction $f$ is a morphism of MTL-algebras.

\begin{lem}\label{counit is an isomorphism}
For every representable MTL-algebra $M$, the assignment $f_{M}$ is an isomorphism.
\end{lem}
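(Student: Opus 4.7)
The plan is to prove that $f_M$, already an MTL-morphism by construction, is both injective and surjective, leveraging the representability of $M$ and the sheaf property of $\Pprod_{l_M}$.

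\emph{Injectivity.} If $f_M(x)=f_M(y)$ then $f_m(x)=f_m(y)$ in $\Pprod(\downarrow m)$ for every $m\in Max(F_M)$. Combining Lemma \ref{maximal fibers coincide with quotients} with equation (\ref{equation3}), the map $f_m$ factors through the canonical projection $M\to M/(\uparrow m)$ via the isomorphism $M/(\uparrow m)\cong [0,m]$ sending $[z]\mapsto z\wedge m\mapsto h_{z\wedge m}$, so $[x]=[y]$ in $M/(\uparrow m)$ for each maximal $m$. Since every $e\in F_M$ lies below some $m\in Max(F_M)$, the induced quotient $M/(\uparrow m)\twoheadrightarrow M/(\uparrow e)$ transports this equality to $M/(\uparrow e)$. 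Representability of $M$ then yields a subdirect embedding $M\hookrightarrow \prod_{e\in F_M}M/(\uparrow e)$, whence $x=y$.

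\emph{Surjectivity.} Fix $h\in \Pprod(F_M)$. For each $m\in Max(F_M)$, the restriction $h|_{\downarrow m}$ corresponds under the isomorphism above to some $y_m\in [0,m]\subseteq M$. The matching property of $h$ across the cover $\{\downarrow m\}_{m\in Max(F_M)}$ translates into a compatibility condition: whenever $m,n\in Max(F_M)$ admit a greatest common lower bound $e$ in $F_M$, $[y_m]_{\uparrow e}=[y_n]_{\uparrow e}$, which by Definition \ref{local unit} rewrites as $e\wedge y_m\leq y_n$ and $e\wedge y_n\leq y_m$. I would set $x:=\bigvee_{m\in Max(F_M)} y_m$ and verify $x\wedge m=y_m$ for each maximal $m$. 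Distributivity of the MTL-lattice gives $x\wedge m=\bigvee_n(m\wedge y_n)$; the $n=m$ summand contributes $y_m$, and for $n\neq m$ the estimate $m\wedge y_n\leq m\wedge n$, combined with the identification $m\wedge n=e$ (the maximal common lower bound of $m,n$ in $F_M$, or $0$ if none exists) and with the compatibility $e\wedge y_n\leq y_m$, yields $m\wedge y_n\leq y_m$. Hence $x\wedge m=y_m$, so $f_m(x)=h|_{\downarrow m}$ for every maximal $m$, and the sheaf property (Lemma \ref{Forest Product are sheaves}) forces $f_M(x)=h$.

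\emph{Main obstacle.} The hard part is the surjectivity step, specifically the verification $x\wedge m=y_m$ after gluing. Two ingredients are essential, both afforded by representability: the local unit property (Definition \ref{local unit}), which turns $e\cdot y$ into $e\wedge y$ and rewrites the quotient congruence on $\uparrow e$ in meet-theoretic terms; and the identification of $m\wedge n$ in $M$ with the maximal common lower bound of $m,n$ in the forest $F_M$ (or with $0$), which rests on Lemma \ref{Closurejoinirreducibles} applied to the idempotent $m\wedge n\leq m$. Neither move is available for an arbitrary finite MTL-algebra, and without them the image of $f_M$ will generally fail to exhaust the forest product.
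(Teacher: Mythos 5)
Your proof is correct and follows essentially the same route as the paper's: for surjectivity both arguments take the local elements $y_m$ corresponding to $h|_{\downarrow m}$, glue them as $x=\bigvee_{m} y_m$, and verify $x\wedge m=y_m$ by identifying $m\wedge n$ with the greatest common lower bound $e_{mn}$ of $m,n$ in $F_M$ (or $0$) and using the compatibility $e_{mn}\wedge y_m=e_{mn}\wedge y_n$. The only divergence is in injectivity, where you invoke the subdirect embedding $M\hookrightarrow\prod_{e\in F_M}M/(\uparrow e)$ --- which in fact holds for any finite MTL-algebra by prelinearity, not by representability in the paper's sense --- whereas the paper argues directly from $x=x\wedge\bigvee_{m\in Max(F_M)}m=\bigvee_{m}(x\wedge m)$; both are sound.
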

\begin{proof}
Only remains to check that $f_{M}$ is bijective. To prove the injectivity of $f_{M}$, supppose $h_{x}=h_{y}$, then, since $h_{x}$ and $h_{y}$ are the amalgamations of the families $\{f_{m}(x)\}_{m\in Max(F_{M})}$ and $\{f_{m}(y)\}_{m\in Max(F_{M})}$, respectively, it follows that $h_{x\wedge m}=h_{x\wedge m}$ for every $m\in  Max(F_{M})$. Then, from equation (\ref{equation3}), it follows that $x\wedge m= y\wedge m$, for every $m\in  Max(F_{M})$. Then, $\bigvee_{m\in  Max(F_{M})}(x\wedge m)=\bigvee_{m\in  Max(F_{M})}(y\wedge m)$. Thereby, $x\wedge \bigvee_{m\in  Max(F_{M})} m=y\wedge \bigvee_{m\in  Max(F_{M})} m$. Since $\bigvee_{m\in  Max(F_{M})} m=1$, $x=y$.
\\
Finally, to prove the surjectiviy of $f_{M}$, let $h\in \Pprod(F_{M})$. Then, $h|_{\downarrow m}\in \Pprod(\downarrow m)$ for every $m\in Max(F_{M})$. Since $\Pprod(\downarrow m)\cong \bigoplus_{e\leq m} [a_{e},e]$ we will write $z_{m}$ for the unique element of $\bigoplus_{e\leq m} [a_{e},e]$ which corresponds to $h|_{\downarrow m}$. Observe that for every $m,n\in Max(F_{M})$ we have that
\[(h|_{\downarrow m})|_{\downarrow m \cap \downarrow n}=(h|_{\downarrow n})|_{\downarrow m \cap \downarrow n} \] Thereby $z_{m}\wedge e_{mn}=z_{n}\wedge e_{mn}$, where $e_{mn}$ is the greatest $e\in F_{M}$ below $m$ and $n$. Hence
\begin{equation}
z_{m} \wedge n= (z_{m}\wedge e_{mn}) \wedge (m\wedge n)= (z_{n}\wedge e_{mn}) \wedge (m\wedge n)=z_{n} \wedge m,  \label{equation4}
\end{equation}
since $z_{m}\leq m$ and $z_{n}\leq n$. If we consider $x=\bigvee_{m \in Max(F_{M})}z_{m}$, then applying equation (\ref{equation4}), in the following calculation \[x\wedge n= \bigvee_{m \in Max(F_{M})}(z_{m}\wedge n)=\bigvee_{m \in Max(F_{M})}(z_{n}\wedge m)=z_{n} \wedge (\bigvee_{m\in Max(F(M))} m)=z_{n},\]
\noindent
we obtain that $h_{x}|_{\downarrow n}=h|_{\downarrow n}$ for every $n\in Max(F_{M})$. Thereby, since $h$ is the amalgamation of the family $\{h|_{\downarrow m}\}_{m\in Max(F_{M})}$ and $\Pprod_{l_{M}}$ is a sheaf, it follows that $h_{x}=h$.
\end{proof}
\noindent
Write $\GfMTL$ for the category of representable finite MTL-algebras. Let $\ufLF$ be the subcategory of $\fLF$ whose objects are the finite labeled forest such that their poset product is a representable MTL-algebra. Let us write $\Ida^{\ast}$ for the restriction of the functor $\Ida$ to the category $\GfMTL$ and $\Vuelta^{\ast}$ for the restriction of the functor $\Vuelta$ to $\ufLF$. From Proposition \ref{adjunction for finite labeled trees}, it follows that $\Ida^{\ast}\dashv \Vuelta^{\ast}$ and that the unit is an isomorphism. Notice that the assigment $f_{M}:M\rightarrow \Pprod(F_{M})$ is the counit of the latter adjunction and, by Lemma \ref{counit is an isomorphism}, it is an isomorphism. We have proved the main result of this paper:

\begin{theo}\label{dualitytheorem}
The categories $\GfMTL$ and $\ufLF$ are dually equivalent.
\end{theo}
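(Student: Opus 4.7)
The plan is to observe that Theorem \ref{dualitytheorem} assembles directly from Proposition \ref{adjunction for finite labeled trees} and the two isomorphism results labelled as Lemma \ref{counit is an isomorphism}. Since the paper already establishes an adjoint pair $\Ida \dashv \Vuelta$, a natural isomorphism $\Ida \circ \Vuelta \cong id_{\fLF}$, and (separately) that $f_M : M \to \Pprod(F_M) = \Vuelta\Ida(M)$ is an isomorphism whenever $M$ is representable, what remains is only to verify that the two functors restrict well to $\GfMTL$ and $\ufLF$ and that the restricted adjunction is an adjoint equivalence.

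First, I would check that the assignments really land where claimed. For $M \in \GfMTL$ we need $\Ida(M) \in \ufLF$, i.e.\ that $\Pprod_{\etiq{M}}(F_M)$ is a representable MTL-algebra; but by Lemma \ref{counit is an isomorphism} this algebra is isomorphic to $M$, and representability is preserved under MTL-algebra isomorphism, so $\Ida^\ast$ is well defined. For $l \in \ufLF$ the condition $\Vuelta(l) \in \GfMTL$ is immediate from the definition of $\ufLF$, so $\Vuelta^\ast$ is well defined. The restriction of the hom-set bijection of the original adjunction then gives $\Ida^\ast \dashv \Vuelta^\ast$ as a contravariant adjunction between the restricted categories.

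Second, I would identify the unit and counit of the restricted adjunction. The counit $\Ida^\ast\Vuelta^\ast \Rightarrow id_{\ufLF}$ is the restriction of the natural isomorphism $\Ida\Vuelta \cong id_{\fLF}$ of the first Lemma \ref{counit is an isomorphism}, so it is an isomorphism componentwise (via Lemmas \ref{pmorphismofcounit is an iso} and \ref{family of morphisms are isos}). The unit $id_{\GfMTL} \Rightarrow \Vuelta^\ast\Ida^\ast$, whose component at $M$ is the map $f_M$ constructed by amalgamating the family $\{f_m(x)\}_{m \in Max(F_M)}$, is an isomorphism by the second Lemma \ref{counit is an isomorphism}. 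Since both unit and counit of $\Ida^\ast \dashv \Vuelta^\ast$ are natural isomorphisms, this adjunction is an adjoint equivalence, and as both functors are contravariant this equivalence is exactly a duality $\GfMTL \simeq \ufLF^{op}$.

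Essentially no new content is required: the main obstacle was cleared in proving Lemma \ref{counit is an isomorphism} (the representability hypothesis on $M$ was used precisely there to guarantee that $\bigvee_{m \in Max(F_M)} m = 1$ and that $M/\!\uparrow\! m \cong \bigoplus_{e \leq m}[a_e,e]$, making the amalgamation $f_M$ bijective). Here I would only remark, as a closing consistency check, that the subcategory $\ufLF$ is non-trivial — it contains at least the image of every representable finite MTL-algebra under $\Ida^\ast$ — so the duality is not vacuous.
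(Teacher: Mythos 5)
Your proposal is correct and follows essentially the same route as the paper: restrict the adjunction of Proposition \ref{adjunction for finite labeled trees} to $\GfMTL$ and $\ufLF$ and invoke the two isomorphism lemmas to conclude that both canonical natural transformations are isomorphisms, hence the contravariant adjunction is a duality. Your explicit check that $\Ida$ actually lands in $\ufLF$ (via $\Pprod_{\etiq{M}}(F_M)\cong M$ and invariance of representability under isomorphism) is a small but worthwhile addition that the paper leaves implicit, and the swap of the words ``unit'' and ``counit'' relative to the paper is only a difference of convention for contravariant adjunctions.
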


\subsection{An explicit description of finite forest products}
\label{An explicit description of finite forest products}

The aim of this section is to bring a characterization of the forest product of finite MTL-algebras in terms of ordinal sums and direct products. Unlike the rest of this work, the methods used in this part are completely recursive. Finally, we recall that, along this section, the symbol $\oplus$ will be used indistinctly, to denote the ordinal sum of posets and the ordinal sum of MTL-algebras.
\\

\noindent
In \cite{A2012}, Aguzzoli suggest that every finite forest can be built recursively. We adapt those ideas in the following definition:

\begin{defi}\label{Recursive definition of finite trees}
The set of finite forest $\fFor$ is the smallest colection of finite posets satisfying the following conditions:
\begin{itemize}
\item[\textbf{(F1)}] $\textbf{1}\in \fFor$,
\item[\textbf{(F2)}] If $\textbf{F}\in \fFor$, then $\textbf{1} \oplus \textbf{F}\in \fFor$,
\item[\textbf{(F3)}] If $\textbf{F}_{1},...,\textbf{F}_{m}\in \fFor$ then $\biguplus_{k=1}^{m}\textbf{F}_{k} \in \fFor$.
\end{itemize}
\end{defi}

\noindent
Recall that every finite forest $\textbf{F}$ can be expressed as a finite disjoint union of finite trees. Hence each finite forest can be written as $\textbf{F}=\biguplus_{k=1}^{n}\textbf{T}_{k}$, where $\textbf{T}_{k}$ is a finite tree. We call the family $\{\textbf{T}_{k}\}$ the \emph{family of component trees} of $\textbf{F}$.

\begin{lem}\label{claim1}
Let $\textbf{F}$ be a finite forest and $l:\textbf{F}\rightarrow \Skel$ be a finite labeled forest. Then \[\Pprod_{l}(\biguplus_{k=1}^{n}\textbf{T}_{k})\cong \Pprod_{l_{1}}(\textbf{T}_{1})\times ... \times \Pprod_{l_{n}}(\textbf{T}_{n})\]
where $l_{i}=l|_{\textbf{T}_{i}}$, for every $i=1,...,n$.
\end{lem}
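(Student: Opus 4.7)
The plan is to define the obvious restriction-coordinate map and check it is an isomorphism of MTL-algebras, leveraging the fact that the component trees are both upper and lower closed in $\textbf{F}$.

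First I would define $\Phi: \Pprod_{l}(\textbf{F}) \to \Pprod_{l_{1}}(\textbf{T}_{1})\times \cdots \times \Pprod_{l_{n}}(\textbf{T}_{n})$ by $\Phi(h)=(h|_{\textbf{T}_{1}},\ldots,h|_{\textbf{T}_{n}})$, and check that each restriction lives in the appropriate factor. Since the $\textbf{T}_{k}$ are the connected components of $\textbf{F}$, elements of different components are incomparable, so for $i \in \textbf{T}_{k}$ every $j<i$ in $\textbf{F}$ already lies in $\textbf{T}_{k}$. Hence the defining condition in Definition~\ref{Poset Product} for $h|_{\textbf{T}_k}$ is inherited from that of $h$, and $h|_{\textbf{T}_{k}} \in \Pprod_{l_{k}}(\textbf{T}_{k})$.

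Next I would verify that $\Phi$ is an MTL-algebra homomorphism. The monoid and lattice operations of both forest products are defined pointwise, so preservation is immediate. For the residual the key observation is again that comparability of $j<i$ only involves elements in the same component: if $i\in \textbf{T}_{k}$, then the condition ``for all $j<i$, $h(j)\leq_{j} g(j)$'' appearing in the definition of $(h\to g)(i)$ in $\Pprod_{l}(\textbf{F})$ is literally the same condition as the one appearing in $\Pprod_{l_{k}}(\textbf{T}_{k})$, so $(h\to g)|_{\textbf{T}_{k}} = h|_{\textbf{T}_{k}} \to g|_{\textbf{T}_{k}}$.

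To establish bijectivity, I would exhibit an inverse $\Psi$ sending $(h_{1},\ldots,h_{n})$ to the function $h$ defined on $F=\biguplus_{k}T_{k}$ by $h|_{\textbf{T}_{k}}=h_{k}$; this is unambiguous because the $\textbf{T}_{k}$ partition $F$. One then checks that $h\in \Pprod_{l}(\textbf{F})$ using the same observation as above: if $h(i)\neq 0_{i}$ with $i\in \textbf{T}_{k}$, then all $j<i$ lie in $\textbf{T}_{k}$ and the condition $h(j)=1$ follows from $h_{k}\in \Pprod_{l_{k}}(\textbf{T}_{k})$. Finally, $\Phi\circ\Psi$ and $\Psi\circ\Phi$ are identities by construction.

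The only mildly subtle point, and thus the potential main obstacle, is the preservation of the residual; once one notes that the ``for all $j<i$'' quantifier is confined to a single component $\textbf{T}_{k}$, the rest of the proof is a routine verification. The statement could also be derived conceptually from Lemma~\ref{Forest Product are sheaves}, since the open sets $\textbf{T}_{k}\in \dec{\textbf{F}}$ are pairwise disjoint and cover $F$, so the sheaf condition forces $\Pprod_{l}(\textbf{F})\cong \prod_{k}\Pprod_{l}(\textbf{T}_{k})$; but the direct argument sketched above is the shortest route.
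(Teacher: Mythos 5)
Your proposal is correct and follows exactly the route the paper takes: the paper defines the same restriction map $\varphi(h)=(h|_{\textbf{T}_{1}},\ldots,h|_{\textbf{T}_{n}})$ and its inverse $\tau$, and simply asserts that both are well-defined mutually inverse MTL-morphisms. You supply the details the paper omits (in particular that the quantifier ``for all $j<i$'' in the residual is confined to a single component tree), which is the right justification.
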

\begin{proof}
Consider $\varphi:\Pprod_{l}(\biguplus_{k=1}^{n}\textbf{T}_{k}) \rightarrow \Pprod_{l_{1}}(\textbf{T}_{1})\times ... \times \Pprod_{l_{n}}(\textbf{T}_{n})$ defined as $\varphi(h)=(h|_{\textbf{T}_{1}},..., h|_{\textbf{T}_{n}})$. Also, consider $\tau: \Pprod_{l_{1}}(\textbf{T}_{1})\times ... \times \Pprod_{l_{n}}(\textbf{T}_{n})\rightarrow \Pprod_{l}(\biguplus_{k=1}^{n}\textbf{T}_{k})$ defined as $\tau(h_{1},...,h_{n})=h$, with $h(i)=h_{j}(i)$ if $i\in \textbf{T}_{j}$. It is clear that $\varphi$ and $\tau$ are well defined morphisms of MTL-algebras and that one is the inverse of the other.
\end{proof}

\begin{lem}\label{claim2}
Let $\textbf{F}$ be a finite forest and $l:\textbf{F}\rightarrow \Skel$ be a finite labeled forest. If $\textbf{F}=1\oplus \textbf{F}_{0}$, where $\textbf{F}_{0}$ is a finite forest, then \[\Pprod_{l}(F)\cong l(\perp)\oplus \Pprod_{l_{0}}( \textbf{F}_{0})\]
where $\perp=Min(F)$ and $l_{0}=l|_{\textbf{T}_{0}}$.
\end{lem}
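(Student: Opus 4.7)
The plan is to produce an explicit isomorphism $\varphi\colon l(\perp)\oplus \Pprod_{l_0}(\textbf{F}_0)\to \Pprod_l(\textbf{F})$. I would start by recording a structural dichotomy. Since $\perp$ lies strictly below every element of $\textbf{F}_0$, clause (1) of Definition \ref{Poset Product} together with condition (3) of Lemma \ref{Equivalent forms of forest product} forces every $h\in \Pprod_l(\textbf{F})$ to satisfy exactly one of the following: (a) $h(\perp)=1$, in which case $h|_{\textbf{F}_0}$ is an arbitrary element of $\Pprod_{l_0}(\textbf{F}_0)$; or (b) $h(\perp)\neq 1$, in which case $h(i)=0_i$ for every $i\in \textbf{F}_0$. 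These two regimes intersect exactly at the constant function $1$, which is precisely the identification prescribed by the ordinal sum.

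With this in mind, I would define $\varphi$ by cases. For $x$ in the bottom summand $l(\perp)$, set $\varphi(x)(\perp)=x$ and $\varphi(x)(i)=0_i$ for $i\in \textbf{F}_0$; for $g$ in the top summand $\Pprod_{l_0}(\textbf{F}_0)$, set $\varphi(g)(\perp)=1$ and $\varphi(g)|_{\textbf{F}_0}=g$. Both clauses send the common element $1$ to the constant $1$ of $\Pprod_l(\textbf{F})$, so $\varphi$ is well defined, and the dichotomy above shows it is a bijection.

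The monoid and lattice operations are pointwise on both sides, and their preservation reduces to a short case analysis on which summand each argument lives in. The main obstacle, and the only substantive verification, is preservation of the residual. I would split into the four cases matching the branches of the ordinal-sum definition of $\to$: (i) $x,y\in l(\perp)$; (ii) $x\in l(\perp)\setminus\{1\}$ and $y\in \Pprod_{l_0}(\textbf{F}_0)$; (iii) $x\in \Pprod_{l_0}(\textbf{F}_0)\setminus\{1\}$ and $y\in l(\perp)$; (iv) $x,y\in \Pprod_{l_0}(\textbf{F}_0)$. In every case the hypothesis ``$\varphi(x)(j)\leq_j \varphi(y)(j)$ for every $j<i$'' appearing in the forest-product residual, once evaluated at $j=\perp$, reduces to exactly the comparison that selects the correct branch of the ordinal-sum residual. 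For example, in case (iii) with $y\neq 1$, the value at $\perp$ gives $1\to_{\perp} y=y$, while for any $i\in \textbf{F}_0$ the failure $\varphi(x)(\perp)=1\not\leq y=\varphi(y)(\perp)$ forces the output to be $0_i$, so $\varphi(x)\to \varphi(y)=\varphi(y)=\varphi(x\to y)$. The remaining cases are analogous, and together they establish that $\varphi$ is an MTL-algebra isomorphism.
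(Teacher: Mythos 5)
Your construction is essentially the paper's: the paper also defines the isomorphism (in both directions) by cases on whether $h(\perp)=1$, using exactly the dichotomy you state, and your residual check fills in what the paper leaves as ``clear.'' One assertion in your write-up is false, however, and it is worth repairing because it touches well-definedness at the identified unit of the ordinal sum: the bottom-summand clause applied to $x=1$ gives the function equal to $1$ at $\perp$ and $0_i$ on all of $\textbf{F}_0$, which is \emph{not} the constant function $1$; so the two clauses do \emph{not} agree on the shared element $1$. The correct resolution is to apply the bottom clause only to $x\in l(\perp)\setminus\{1\}$ and let $1$ be handled by the top clause (so $\varphi(1)$ is the constant $1$); this choice is forced, since in your case (i) with $x\leq_\perp y$ the residual $\varphi(x)\rightarrow\varphi(y)$ computes to the constant $1$, and the homomorphism property would fail if $\varphi(1)$ were taken from the bottom clause. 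With that one-line amendment the bijectivity argument via the dichotomy and the four-case residual verification go through, and the proof is correct.
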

\begin{proof}
Let us assume $\textbf{F}=1\oplus \textbf{F}_{0}$ and suppose that $h\in \Pprod_{l}(F)$. If $A_{h}=\{i\in F\mid h(i)\neq 1\}$, then either $\perp \in A_{h}$, $\perp \notin A_{h}$ or $h(\perp)=0_{\perp}$. In the first case, from the assumption it follows that $h(j)=0_{j}$ for every $j\in \textbf{F}_{0}$. In the second case, again, by assumption, we obtain that $h(\perp)=1$. The final case implies that $h=0$. Thereby, $h$ represents either an element of $l(\perp)$ (those with $h(\perp)\neq 1$) or an element of $\Pprod_{l_{0}}( \textbf{F}_{0})$ considered as $h=h|_{F_{0}}$. Based on this fact we consider $p:\Pprod_{l}(\textbf{F})\rightarrow l(\perp)\oplus \Pprod_{l_{0}}(\textbf{F}_{0})$, defined for every $h\in \Pprod_{l}(\textbf{F})$ as
\begin{displaymath}
p(h)= \left\{ \begin{array}{ll}
             h(\perp), &  if\; h(\perp)\neq 1\\
             h|_{\textbf{F}_{0}}, & if\; h(\perp)= 1
             \end{array}
   \right.
\end{displaymath}
Let $a\in l(\perp)\oplus \Pprod_{l_{0}}(\textbf{F}_{0})$. Then, $a$ is either an element of $l(\perp)$ or $a\in \Pprod_{l_{0}}(\textbf{F}_{0})$. Let us take $q:l(\perp)\oplus \Pprod_{l_{0}}(\textbf{F}_{0})\rightarrow \Pprod_{l}(\textbf{F})$ as $q(a)=h_{a}$, where $h_{a}(i)=a(i)$, if $i\in \textbf{F}_{0}$ or
\begin{displaymath}
h_{a}(\perp)= \left\{ \begin{array}{ll}
             a, &  if\; a\in l(\perp)\\
             1, & otherwise
             \end{array}
   \right.,
   \textrm{ if } i \notin \textbf{F}_{0}.
\end{displaymath}
It is clear that $p$ and $q$ are well defined morphisms of MTL-algebras such that one is the inverse of the other. This concludes the proof.
\end{proof}

\noindent
Let $\textbf{T}$ be a finite tree, $i\in \textbf{T}$ and consider the \emph{set of covering elements} of $i$:
\[\cov{\textbf{T}}{i}=\{j\in \textbf{T} \mid i\prec j\} \]
\noindent
where $\prec$ denotes the covering relation in posets.

\begin{defi}\label{Inductive construction}
Let $\textbf{T}$ be a finite tree and $l:\textbf{T}\rightarrow \Skel$ be a finite labeled forest. For every $i\in T$ let us define recursively the following MTL-algebra:

\begin{displaymath}
\mtl{\textbf{T}}{i}= \left\{ \begin{array}{ll}
             l(i), &  i\in Max(T)\\
             l(i)\oplus \Pi_{j\in \cov{\textbf{T}}{i}}\mtl{\textbf{T}}{j}, &  i\notin Max(T).
             \end{array}
   \right.
\end{displaymath}
\noindent

\end{defi}

%Observe that every finite forest $\textbf{F}$ can be provided with the coproduct topology induced by each one of the Alexandrov topologies of its component trees. So, if $\textbf{F}=\biguplus_{i=1}^{n}\textbf{T}_{i}$ and $S\in \dec{\textbf{F}}$ then $S=\bigcup_{k=1}^{n}S_{k}$ for some finite family $\{S_{k}\}$ where $S_{i}\in \mathcal{D}(\textbf{T}_{k})$. Let us write $m_{k}$ for the minimum element of $S_{k}$. It is clear that $Min(S)=\{m_{1},...,m_{n}\}$, where $Min(S)$ denotes the minimal elements of $S$.
%\\
\noindent
In the following, we will write $\mathcal{K}_{l}(\textbf{T})$ for $\mtl{\textbf{T}}{m}$, where $m$ is the last element of $\textbf{T}$.
\\

\noindent
Let $\textbf{F}$ be a finite forest and $\{\textbf{T}_{k}\}$ be its collection of component trees. If $l:\textbf{F}\rightarrow \Skel$ is a finite labeled forest let us consider the MTL-algebra:

\begin{equation}\label{Defining Functor}
\mathcal{K}_{l}(\textbf{F})=\prod_{k=1}^{n} \mathcal{K}_{l}(\textbf{T}_{k})
\end{equation}

%\noindent
%In particular, if $S=\downarrow j$ for some $j\in \textbf{F}$, then from Definition \ref{Inductive construction} it follows that:

%\begin{equation}\label{equation2}
%\mathcal{K}_{l}(\downarrow j)=l(m)\oplus l(x_{1}) \oplus ... \oplus l(x_{m-1}) \oplus l(j)
%\end{equation}
%\noindent
%where $m<x_{1}<...<x_{m-1}<j$.
%\\
\begin{prop}\label{explicit description of forest products}
Let  $l:\textbf{F}\rightarrow \Skel$ be a finite labeled forest. Then $\Pprod_{l}(\textbf{F})\cong \mathcal{K}_{l}(\textbf{F})$.
\end{prop}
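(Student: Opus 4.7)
The plan is to proceed by structural induction on the finite forest $\textbf{F}$, using the recursive characterization given in Definition \ref{Recursive definition of finite trees}. The two inductive steps correspond exactly to the two lemmas (\ref{claim1}) and (\ref{claim2}), which handle the disjoint union and ordinal-sum-with-singleton constructions respectively; the recursive definition of $\mathcal{K}_l(-)$ is engineered to match these two operations, so the induction should essentially be a bookkeeping exercise.

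For the base case (F1), when $\textbf{F} = \textbf{1}$ consists of a single point $\ast$, one has $\Pprod_l(\textbf{F}) \cong l(\ast)$ from the definition of the forest product, while $\mathcal{K}_l(\textbf{F}) = \mtl{\textbf{F}}{\ast} = l(\ast)$ because $\ast \in Max(\textbf{F})$. For the inductive step corresponding to (F3), suppose $\textbf{F} = \biguplus_{k=1}^n \textbf{T}_k$ with each $\textbf{T}_k$ a tree to which the inductive hypothesis applies. Lemma \ref{claim1} gives
\[
\Pprod_l(\textbf{F}) \cong \prod_{k=1}^n \Pprod_{l_k}(\textbf{T}_k) \cong \prod_{k=1}^n \mathcal{K}_{l_k}(\textbf{T}_k) = \mathcal{K}_l(\textbf{F}),
\]
where the last equality is the definition (\ref{Defining Functor}) of $\mathcal{K}_l$ on a forest in terms of its component trees.

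For the inductive step corresponding to (F2), suppose $\textbf{F} = \textbf{1} \oplus \textbf{F}_0$, so that $\textbf{F}$ is itself a tree $\textbf{T}$ with minimum $\perp$, and $\textbf{F}_0$ is a finite forest whose component trees $\textbf{T}_1^0,\ldots,\textbf{T}_m^0$ are precisely the maximal subtrees of $\textbf{T}$ rooted at the covers of $\perp$. Lemma \ref{claim2} provides
\[
\Pprod_l(\textbf{F}) \cong l(\perp) \oplus \Pprod_{l_0}(\textbf{F}_0),
\]
and the inductive hypothesis applied to $\textbf{F}_0$ yields $\Pprod_{l_0}(\textbf{F}_0) \cong \prod_{k=1}^m \mathcal{K}_{l_0}(\textbf{T}_k^0)$. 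On the other hand, by Definition \ref{Inductive construction},
\[
\mathcal{K}_l(\textbf{F}) = \mtl{\textbf{T}}{\perp} = l(\perp) \oplus \prod_{j \in \cov{\textbf{T}}{\perp}} \mtl{\textbf{T}}{j}.
\]
The key observation is that $\cov{\textbf{T}}{\perp}$ consists exactly of the roots $r_k$ of the component trees $\textbf{T}_k^0$, and the recursion defining $\mtl{\textbf{T}}{r_k}$ depends only on the subtree of $\textbf{T}$ with root $r_k$, which is $\textbf{T}_k^0$ with label $l_0|_{\textbf{T}_k^0}$. Hence $\mtl{\textbf{T}}{r_k} = \mathcal{K}_{l_0}(\textbf{T}_k^0)$, and combining the two chains of isomorphisms gives $\Pprod_l(\textbf{F}) \cong \mathcal{K}_l(\textbf{F})$.

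The conceptual content is minimal once Lemmas \ref{claim1} and \ref{claim2} are in hand; the only genuine subtlety, and therefore the main obstacle, is verifying that the recursive definition of $\mtl{\textbf{T}}{-}$ on a subtree rooted at a cover of $\perp$ truly coincides with $\mathcal{K}_{l_0}$ applied to the corresponding component tree of $\textbf{F}_0$. This is a matter of tracking the restriction of the labeling and recognizing that the recursion $\mtl{\textbf{T}}{i}$ is intrinsic to the subtree $\uparrow i$, so it is invariant under passing between $\textbf{T}$ and its maximal proper subtrees. Once this is made precise (ideally stated as a small auxiliary lemma), the induction closes immediately.
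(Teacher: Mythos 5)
Your proposal is correct and follows essentially the same route as the paper: structural induction over the recursive generation of finite forests, with the base case trivial, the disjoint-union step handled by Lemma \ref{claim1} together with equation (\ref{Defining Functor}), and the ordinal-sum step handled by Lemma \ref{claim2} together with Definition \ref{Inductive construction}. The subtlety you isolate --- that $\mtl{\textbf{T}}{j}$ for $j$ a cover of $\perp$ is intrinsic to the subtree rooted at $j$ and hence equals $\mathcal{K}_{l_0}$ of the corresponding component tree of $\textbf{F}_0$ --- is exactly the identification the paper makes (implicitly) when it writes $\cov{\textbf{F}}{\perp}=\{m_{1},\dots,m_{r}\}$ with $m_{k}=Min(\textbf{T}_{k}^{0})$.
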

\begin{proof}
We prove this Proposition by induction over $\fFor$. If $F=\bf{1}$, the conclusion is trivial. On the other hand, let us suppose that $\textbf{F}=\textbf{1}\oplus \textbf{F}_{0}$, with $\textbf{F}_{0}\in \fFor$ be such that $\Pprod_{l}(\textbf{F}_{0})\cong \mathcal{K}_{l}(\textbf{F}_{0})$. From Lemma \ref{claim2} and the inductive hypothesis, it follows that \[\Pprod_{l}(\textbf{F})\cong l(\perp) \oplus \Pprod_{l_{0}}(\textbf{F}_{0})\cong l(\perp)\oplus \mathcal{K}_{l_{0}}(\textbf{F}_{0}).\] Since $\textbf{F}_{0}$ is a finite forest, $\textbf{F}_{0}=\biguplus_{k=1}^{r}\textbf{T}_{k}^{0}$. Thus by equation (\ref{Defining Functor}): \[\mathcal{K}_{l_{0}}(\textbf{F}_{0})=\prod_{k=1}^{r} \mathcal{K}_{l_{0}}(\textbf{T}_{k}^{0})=\prod_{k=1}^{r}\mtl{\textbf{T}_{k}^{0}}{m_{k}},\] with $m_{k}=Min(\textbf{T}_{k}^{0})$. Since $\cov{\textbf{F}}{\perp}=\{m_{1},...,m_{r}\}$, by Definition \ref{Inductive construction}, we have that $\Pprod_{l}(\textbf{F})\cong l(\perp)\oplus \mathcal{K}_{l_{0}}(\textbf{F}_{0})= \mathcal{K}_{l}(\textbf{F})$. Finally, assume that $\textbf{F}=\biguplus_{k=1}^{n}\textbf{F}_{k}$, with $\textbf{F}_{k}\in \fFor$ be such that $\Pprod_{l}(\textbf{F}_{k})\cong \mathcal{K}_{l}(\textbf{F}_{k})$, for every $i=1,...,n$. Since $\textbf{F}_{k}=\biguplus_{i=1}^{m_{k}}\textbf{T}_{ki}$, with $\{\textbf{T}_{ki}\}$ be the family of component trees of $\textbf{F}_{k}$, then $\textbf{F}=\biguplus_{k=1}^{n} \biguplus_{i=1}^{m_{k}}\textbf{T}_{ki}$. By Lemma \ref{claim1}, we have that

\begin{equation}\label{equationfinal}
\Pprod_{l}(\textbf{F})\cong \Pprod_{l}(\textbf{T}_{11})\times ... \times \Pprod_{l}(\textbf{T}_{1m_{1}})\times ... \times\Pprod_{l}(\textbf{T}_{n1})\times ... \times \Pprod_{l}(\textbf{T}_{nm_{n}}).
\end{equation}

\noindent
Since the direct product of algebras is associative, by Lemma \ref{claim1}, $\Pprod_{l}(\textbf{F}_{k})\cong \prod_{i=1}^{m_{k}}\Pprod_{l}(\textbf{T}_{ki})$. Then, from equation (\ref{equationfinal}), we get that $\Pprod_{l}(\textbf{F})\cong \prod_{k=1}^{n}\Pprod_{l}(\textbf{F}_{k})$. Hence, by inductive hypothesis, we have that $\Pprod_{l}(\textbf{F})\cong \prod_{k=1}^{n}\mathcal{K}_{l}(\textbf{F}_{k})$. From equation (\ref{Defining Functor}), we have that $\mathcal{K}_{l}(\textbf{F}_{k})=\prod_{i=1}^{m_{k}}\mathcal{K}_{l}(\textbf{T}_{ki})$. Since, the family of component trees of $\textbf{F}$ is $\bigcup_{k,i=1}^{n,m_{k}}\{\textbf{T}_{ki}\}$, using again equation (\ref{Defining Functor}), we conclude that $\Pprod_{l}(\textbf{F})\cong \mathcal{K}_{l}(\textbf{F})$.
\end{proof}

\noindent
In the following example we illustrate how to build an MTL-algebra by applying equation (\ref{Defining Functor}) and Definition \ref{Inductive construction}.

\begin{ex}\label{Application of the algorithm}
Let  $l:\textbf{F}\rightarrow \Skel$ be a finite labeled forest and regard the following finite tree $\textbf{F}$:

\begin{displaymath}
\xymatrix{
^g {\circ}  \ar@{-}[dr] & & {\circ} ^h \ar@{-}[dl] & & &  &
\\
 & ^c {\circ} \ar@{-}[d] & & {\circ} ^d \ar@{-}[dr] & {\circ} ^e \ar@{-}[d] & {\circ} ^f \ar@{-}[dl] &
\\
 & _a {\circ} & &  & {\circ} _b & &
}
\end{displaymath}
\noindent
As we can see, $\textbf{F}=\textbf{T}_{1} \uplus \textbf{T}_{2}$, where $\textbf{T}_{1}=\{a,c,g,h\}$ and $\textbf{T}_{2}=\{b,d,e,f\}$. Since $Min(\textbf{T}_{1})=\{a\}$, $Min(\textbf{T}_{2})=\{b\}$, from equation (\ref{Defining Functor}), it follows that:
\[ \mathcal{K}_{l}(\textbf{F})= \mathcal{K}_{l}(\textbf{T}_{1}) \times \mathcal{K}_{l}(\textbf{T}_{2})=\mtl{\textbf{T}_{1}}{a} \times \mtl{\textbf{T}_{2}}{b}.\]

\noindent
Observe that $\cov{\textbf{T}_{1}}{a}=\{c\}$ and $\cov{\textbf{T}_{2}}{b}=\{d,e,f\}$. Since $a\notin Max(\textbf{T}_{1})$ and $b\notin Max(\textbf{T}_{2})$, applying the Definition \ref{Inductive construction} to $\mtl{\textbf{T}_{1}}{a}$ and $\mtl{\textbf{T}_{2}}{b}$ respectively, we obtain:
\[ \mathcal{K}_{l}(S)=[l(a)\oplus \mtl{\textbf{T}_{1}}{c}] \times [l(b)\oplus (\mtl{\textbf{T}_{2}}{d} \times \mtl{\textbf{T}_{2}}{e} \times \mtl{\textbf{T}_{2}}{f} )]. \]

\noindent
Since $c\notin Max(\textbf{T}_{1})$ and $\cov{\textbf{T}_{1}}{c}=\{g,h\}$ but $d,e,f\in Max(\textbf{T}_{2})$; again, applying Definition \ref{Inductive construction} to $\mtl{\textbf{T}_{1}}{c}, \mtl{\textbf{T}_{2}}{d}$, $\mtl{\textbf{T}_{2}}{e}$ and $\mtl{\textbf{T}_{2}}{f}$ respectively, we get:
\[ \mathcal{K}_{l}(S)=[l(a)\oplus (l(c)\oplus (\mtl{\textbf{T}_{1}}{g} \times \mtl{\textbf{T}_{1}}{h}))] \times [l(b)\oplus (l(d) \times l(e) \times l(f) )]. \]

\noindent
Finally, since $g,h\in Max(\textbf{T}_{1})$, applying the Definition \ref{Inductive construction} to $\mtl{\textbf{T}_{1}}{g}$ and $\mtl{\textbf{T}_{1}}{h}$, we conclude that:
\[\mathcal{K}_{l}(S)=[l(a)\oplus (l(c)\oplus (l(g) \times l(h)))] \times [l(b)\oplus (l(d) \times l(e) \times l(f) )]. \]
\end{ex}

\section*{Acknowledgements}

This work was supported by CONICET [PIP 112-201501-00412].

\end{document}